\def\R{\mathbf R}
\def\bfS{\mathbf S}
\def\N{\mathbf N}
\def\0{\mathbf 0}
\newcommand\Dom{\mbox{Dom}\thinspace}
\newcommand\M{M}
\numberwithin{equation}{section}
\newtheorem{theorem}{Theorem}
\newtheorem{lemma}[theorem]{Lemma}
\newtheorem{proposition}[theorem]{Proposition}
\newtheorem{corollary}[theorem]{Corollary}
\newtheorem{definition}[theorem]{Definition}
\newtheorem{remark}[theorem]{Remark}
\newtheorem{example}[theorem]{Example}
\numberwithin{theorem}{section}
\newcommand{\dom}{\mathop{\rm Dom}}
\newcommand{\ran}{\mathop{\rm Ran}}
\newcommand{\graph}{\mathop{\rm Graph}}
\newcommand{\antigraph}{\mathop{\rm Antigraph}}
\begin{document}

\title{The intrinsic dynamics of optimal transport\thanks{
The authors thank the University of Nice Sophia-Antipolis, Berkeley's Mathematical Sciences
Research Institute (MSRI) and Toronto's Fields' Institute for the Mathematical Sciences
for their kind hospitality during various stages of this work. They
acknowledge partial support of RJM's research by Natural Sciences and Engineering Research
Council of Canada Grant 217006-08, and, during their Fall 2013 residency at MSRI, by the National Science
Foundation under Grant No. 0932078 000. 
\copyright 2015 by the authors.}}



\author{Robert J.~McCann\thanks{Department of Mathematics, University of Toronto, Toronto, Ontario, Canada M5S 2E4 {\tt mccann@math.toronto.edu}}
\and
Ludovic~Rifford\thanks{Universit\'e Nice Sophia Antipolis, Laboratoire \ J.A.\ Dieudonn\'e, UMR CNRS 7351, Parc  Valrose, 06108 Nice Cedex 02, France \& Institut Universitaire de France {\tt ludovic.rifford@math.cnrs.fr}}}




\maketitle

\begin{abstract}
The question of which costs admit unique optimizers in
the Monge-Kantorovich problem of optimal transportation between arbitrary probability densities is investigated.   
For smooth costs and densities on compact manifolds, the only known examples for which the optimal solution is always unique  require at least one of the two underlying spaces to be homeomorphic to
a sphere.  We introduce a (multivalued)
dynamics which the transportation cost induces between the target and source space, for which
the presence or absence of a sufficiently large set of periodic trajectories plays a role in determining whether or
not optimal transport is necessarily unique.  This insight allows us to construct smooth costs on a 
pair of compact manifolds with arbitrary topology,  
so that the optimal transportation between any pair of probility densities is unique. 
\end{abstract}

\section{Introduction}

Let $M$ and $N$ be smooth closed manifolds (meaning compact, without boundary) of dimensions $m$ and $n\geq 1$ respectively, and $c:M\times N \rightarrow \R$ a continuous cost function. Given two probability measures $\mu$ and  $\nu$ respectively on $M$ and $N$,  the Monge problem consists in minimizing the transportation cost 
\begin{eqnarray}\label{monge}
\int_{M\times N} c\bigl(x,T(x) \bigr) \, d\mu(x),
\end{eqnarray}
among all transport maps from $\mu$ to $\nu$, that is such that $T_{\sharp} \mu =\nu$. A classical way to prove existence and uniqueness of optimal transport maps is to relax the Monge problem into the Kantorovitch problem. That problem is a linear optimization problem under convex constraints, it consists in minimizing the transportation cost 
\begin{eqnarray}\label{kantorovitch}
\int_{M\times N} c(x,y) \, d\gamma(x,y),
\end{eqnarray}
among all transport plans between $\mu$ and $\nu$,  meaning $\gamma$ belongs to the set
$\Pi(\mu,\nu)$ of non-negative measures having marginals $\mu$ and $\nu$. By classical (weak) compactness arguments, minimizers for the Kantorovitch problem always exist. A way to get existence and uniqueness of minimizers for the Monge problems is to show that any minimizer of (\ref{kantorovitch}) is supported on a graph. Assuming that $c$ is Lipschitz and $\mu$ is absolutely continuous with respect to the Lebesgue measure,   a condition which guarantees this graph
property is the following 
nonsmooth version \cite{ChiapporiMcCannNesheim10} \cite{riffordmemoir} of the 
TWIST condition 
$$
D_x^- c \bigl(\cdot,y_1\bigr) \cap D_x^- c \bigl(\cdot,y_2\bigr) = \emptyset \qquad \forall y_1\neq y_2 \in N, \, \forall x\in M, 
$$
where $D_x^- c (\cdot,y_i)$ denotes the sub-differential of the function $x\mapsto c(x,y_i)$ at $x$.
In this case, it is well-known how to use linear programming duality to prove that the Kantorovich minimizer is unique,  and that Monge's infimum is attained  \cite{Gangbo95} \cite{Levin99}.

Examples of Lipschitz costs satisfying the nonsmooth TWIST are given by any cost coming from variational problems associated with Tonelli Lagrangians of class $C^{1,1}$ (see \cite{bb07}), like the square of Riemannian distances (see \cite{mccann01}). Those costs are never $C^1$ on compact manifolds such as $M\times N$.  As a matter of fact, any cost $c:M\times N\rightarrow \R$ of class $C^1$ admits a triple $x \in M,y_1 \in N, y_2 \in N,$ (take $y_1$ with $c(x,y_1) = \min \{c(x,\cdot)\}$ and $y_2$ with $c(x,y_2) = \max \{c(x,\cdot)\}$ ) such that  
$$
\frac{\partial c}{\partial x} \bigl( x, y_1\bigr) = \frac{\partial c}{\partial x} \bigl( x, y_2\bigr), 
$$ 
violating the nonsmooth TWIST condition. Indeed, we shall show the following holds.

\begin{theorem}[Non-genericity of twist]
\label{THMmotivation}
Let $c:M\times N \rightarrow [0,\infty)$ be a cost function of class $C^2$. Assume that $\dim M = \dim N$ and 
\begin{eqnarray}\label{ASSmotivation}
\exists (\bar{x},\bar{y})\in M\times N \quad \mbox{such that} \quad \frac{\partial^2 c}{\partial x \partial y}(\bar{x},\bar{y}) \quad \mbox{is invertible}. 
\end{eqnarray}
Then there is a pair  $\mu, \nu$ of probability measures respectively on $M$ and $N$ which are both absolutely continuous with respect to the Lebesgue measure for which there is a unique optimal transport plan for (\ref{kantorovitch})
 and such that this plan is not supported on a graph. The set of costs $c$ satisfying (\ref{ASSmotivation}) is open and dense in $ C^2(M\times N;\R)$. 
\end{theorem}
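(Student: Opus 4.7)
\emph{Openness and density.} Openness of (\ref{ASSmotivation}) is immediate, since invertibility at a fixed point is $C^2$-stable. For density, given $c$ and $\epsilon>0$, I pick local coordinates centred at an arbitrary $(\bar x,\bar y)\in M\times N$ together with bump functions $\chi_M$ on $M$ and $\chi_N$ on $N$ equal to $1$ at $\bar x,\bar y$ respectively; adding the perturbation $\delta\,\chi_M(x)\chi_N(y)\sum_{i=1}^n x^iy^i$ shifts the mixed derivative at $(\bar x,\bar y)$ by $\delta\,\mathrm{Id}$, so for all but finitely many small $\delta$ the resulting cost is $\epsilon$-close to $c$ and satisfies (\ref{ASSmotivation}).

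\emph{Finding two preimages of one covector.} The heart of the matter is a construction from (\ref{ASSmotivation}). I consider $F_{\bar x}:N\to T^*_{\bar x}M\cong\R^n$ defined by $F_{\bar x}(y):=\partial c/\partial x(\bar x,y)$; by (\ref{ASSmotivation}) it is a local diffeomorphism at $\bar y$. Since $N$ is a compact $n$-manifold without boundary while $\R^n$ is non-compact (so the image of $F_{\bar x}$ misses some open set), a standard mod-$2$ Brouwer-degree argument forces any regular value of $F_{\bar x}$ to have even preimage count. Applying Sard I pick a regular value $p$ arbitrarily close to $F_{\bar x}(\bar y)$; local invertibility at $\bar y$ supplies a regular preimage $y_1$ nearby, and parity forces a further regular preimage $y_2\ne y_1$ in the component of $\bar y$, at which $\partial^2c/\partial x\partial y(\bar x,y_2)$ is automatically invertible.

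\emph{Local branches, quadratic potential, contact set.} In local coordinates around $\bar x$ I set $u_0(x):=p\cdot(x-\bar x)-\tfrac{\lambda}{2}|x-\bar x|^2$ with $\lambda$ chosen so large that $\partial^2c/\partial x^2(\bar x,y_i)+\lambda\,\mathrm{Id}$ is positive definite for $i=1,2$. The implicit function theorem then produces smooth maps $T_1,T_2$ on a small convex neighborhood $U\ni\bar x$ with $T_i(\bar x)=y_i$ and $\partial c/\partial x(x,T_i(x))=\nabla u_0(x)$; differentiation yields $dT_i=(\partial^2c/\partial x\partial y)^{-1}(-\lambda\,\mathrm{Id}-\partial^2c/\partial x^2)$, invertible at $\bar x$, so after shrinking each $T_i:U\to V_i:=T_i(U)$ is a diffeomorphism and $V_1\cap V_2=\emptyset$. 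I define
\[
v(y):=\inf_{x\in\overline U}\bigl[c(x,y)-u_0(x)\bigr]\ \ (y\in V_1\cup V_2),\qquad v(y):=+\infty\ \ \text{elsewhere,}
\]
and $u:=v^c$ on $M$. Strong convexity of $x\mapsto c(x,y)-u_0(x)$ on the convex domain $\overline U$ (from the large $\lambda$) ensures the infimum is uniquely attained at the interior critical point $x=T_i^{-1}(y)$, yielding $v(T_i(x))=c(x,T_i(x))-u_0(x)$ and $\partial^c u(x)=\{T_1(x),T_2(x)\}$ for every $x\in U$.

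\emph{Measures, optimality, uniqueness, main obstacle.} I take $\mu$ to be any smooth probability density compactly supported in $U$ and set $\nu:=\tfrac12(T_1)_\sharp\mu+\tfrac12(T_2)_\sharp\mu$, both absolutely continuous. The plan $\gamma:=\tfrac12[(\mathrm{id},T_1)_\sharp\mu+(\mathrm{id},T_2)_\sharp\mu]\in\Pi(\mu,\nu)$ lives on a union of two disjoint graphs, hence on no single graph, and duality with $(u,v)$ certifies optimality. Any competing optimizer $\gamma'$ has support in the contact set $\{u+v=c\}$, which over $U$ equals $\mathrm{Graph}(T_1|_U)\cup\mathrm{Graph}(T_2|_U)$; disjointness of $V_1,V_2$ gives a unique decomposition $\gamma'=(\mathrm{id},T_1)_\sharp\mu_1+(\mathrm{id},T_2)_\sharp\mu_2$ with $\mu_1+\mu_2=\mu$, and matching the second marginal restricted to each $V_i$ (then pulling back through the diffeomorphism $T_i$) forces $\mu_i=\tfrac12\mu$, so $\gamma'=\gamma$. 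The main obstacle is precisely the potential construction: localizing $v$ to $V_1\cup V_2$ is what rules out distant competitors in $\partial^c u$, while the quadratic penalization inside $u_0$ serves double duty -- it makes the $T_i$ genuine diffeomorphisms (so the pushed-forward measures stay absolutely continuous) and ensures that the interior critical point is the actual global infimum defining $v$.
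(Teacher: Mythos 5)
Your proposal is correct and its core follows the same route as the paper: a parity/degree argument produces a second preimage $y_2$ of a regular value of $y\mapsto \partial c/\partial x(\bar x,y)$ (the paper proves the same evenness of the fibre by the boundary-of-a-compact-$1$-manifold argument, i.e.\ the standard proof of mod-$2$ degree invariance); one then builds a $c$-convex potential whose contact set over a small $\bar U$ is exactly two disjoint diffeomorphism graphs, takes $\mu$ supported in $\bar U$ and $\nu=\frac12(T_1)_\sharp\mu+\frac12(T_2)_\sharp\mu$, and concludes by duality. Two of your choices are genuine simplifications worth noting. First, your density argument (perturb by $\delta\,\chi_M\chi_N\sum x^iy^i$, which shifts the mixed Hessian at the centre by $\delta\,\mathrm{Id}$ plus nothing, so $A+\delta\,\mathrm{Id}$ is invertible for all but finitely many $\delta$) is considerably shorter than the paper's, which stratifies by the maximal rank $\bar r$ and perturbs only the degenerate directions after a change of coordinates; both work. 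Second, defining $v$ as an infimum over $\overline U$ and invoking strong convexity of $x\mapsto c(x,y)-u_0(x)$ makes the key inequality $c(x,T_i(x'))-u_0(x)\ge c(x',T_i(x'))-u_0(x')$ (with equality iff $x'=x$) automatic, whereas the paper defines $\phi_0$ by an explicit formula and verifies the analogous inequality \eqref{motiv5} by a second-order expansion. Two trivial slips to fix in your write-up: the extension of $v$ off $V_1\cup V_2$ must be $-\infty$ (not $+\infty$) for $u=v^c=\inf_y[c(\cdot,y)-v(y)]$ to be finite, and the claim $\partial^c u(x)=\{T_1(x),T_2(x)\}$ should be restricted to $\partial^c u(x)\cap(V_1\cup V_2)$, which is all your uniqueness argument actually uses since $\nu$ is supported there.
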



The conclusion of Theorem \ref{THMmotivation}  implies that solutions  for the Monge problem  with smooth cost do not generally exist
in a compact setting. The purpose of the present paper is to study sufficient conditions for uniqueness of the Kantorovitch optimizer, and to exhibit smooth costs 
on arbitrary manifolds 
for which optimal plans are unique, despite the fact that such plans are not generally concentrated on graphs.  
Some examples of such costs have been given in \cite{gm00} \cite{akm11} (see also \cite{bc09}). 
However,  if uniqueness is to hold for arbitrary absolutely continuous $\mu$ and $\nu$ on $M$ and $N$,
all previous examples which we are aware of that involve smooth costs have required at least one of the two compact manifolds 
be homeomorphic to a sphere.  
Here we go far beyond this,  to construct examples of such costs on compact manifolds
whose topology can be arbitrary.
Our main idea is to relate the uniqueness of the Kantorovitch optimizer to a multivalued dynamics induced by the cost which does not seem to have been considered previously.


Before stating our results, we need to introduce some definitions. 


Denoting the non-negative integers by $\N=\{0,1,2,\ldots\}$ and the positive integers by 
$\N^* =\N \setminus \{0\}$,
we begin recalling the well-known notion of $c$-cyclical monotonicity.

\begin{definition}[$c$-cyclical monotonicity]
A set $S \subset M \times N$ is $c$-cyclically monotone when for all $I \in \N^*$
and $(x_i,y_i) \in S$ for $i=1, \ldots, I$ with  $x_{I+1}=x_1$, we have 
$$
\sum_{i=1}^I \left[ c(x_{i+1},y_i) - c(x_i,y_i)\right] \geq 0. 
$$
\end{definition}

For given $\mu,\nu$ and $c$, 
it is also well-known \cite{gm96} that some closed $c$-cyclically monotone subset $S\subset M\times N$
contains the support of all optimizers to \eqref{kantorovitch}.
Note that of course, any subset of a  $c$-cyclically monotone set is  $c$-cyclically monotone as well. We come now to the concepts which will play a major role.

\begin{definition}[Alternant chains]
For each $(x,y) \in M \times N$ assume $c(x,\cdot)$ and $c(\cdot,y)$
are differentiable.
\label{Laltchain}
Fixing $S \subset M \times N$, we call {\em chain} in $S$ of length $L\ge 1$ (or {\em $L$-chain} for short) any ordered family of pairs 
$$
 \Bigl( (x_1,y_1), \ldots, (x_L,y_L)\Bigr) \in S^L
$$
such that the set
$$
\Bigl\{ (x_1,y_1), \ldots (x_L,y_L)\Bigr\}
$$
is $c$-cyclically monotone and for every $l=1, \ldots, L-1$ there holds, either 
\begin{equation}\label{altc1}
x_{l} = x_{l+1} \quad \mbox{and} \quad y_l \neq y_{l+1} = y_{\min\{L,l+2\}}\quad  \mbox{ and }  \quad \frac{\partial c}{\partial x} (x_l,y_l) = \frac{\partial c}{\partial x} (x_l,y_{l+1}), 
\end{equation}
or
\begin{equation}\label{altc2}
y_l = y_{l+1} \quad \mbox{and} \quad x_l \neq x_{l+1} = x_{\min\{L,l+2\}}  \quad  \mbox{ and } \quad \frac{\partial c}{\partial y} (x_l,y_l) = \frac{\partial c}{\partial y} (x_{l+1},y_{l}).
\end{equation}
The chain 
is called cyclic if its projections onto $M$ and $N$ each consist of $L/2$ distinct points,
in which case $L$ must be even with $y_L=y_1$ and $x_L \ne x_1$.
\end{definition}


Note the existence of any cyclic 
chain $\left((x_1,y_1), \ldots (x_L,y_L)\right)$ permits the construction of an infinite 
chain  $\{(x_l,y_l)\}_{l\in \N^*}$ by
\begin{equation}\label{periodic chain}
\bigl(x_{kL+l},y_{kL+l}\bigr) := \bigl(x_{l},y_{l}\bigr) \qquad \forall k\geq 1, \, \forall l\in \{1, \ldots, L\}.
\end{equation}

Our first result is the following:




\begin{theorem}[Optimal transport is unique if long chains are rare]
\label{THM1}
Fix a cost $c\in C^1(M\times N)$. Choose Borel probability measures 
$\mu$ on $M$ and $\nu$ on $N$, both 
absolutely continuous with respect to Lebesgue, 
and let $\Pi_0$ denote the set of all optimizers for \eqref{kantorovitch}  on $\Pi(\mu,\nu)$.
Let $E_0 \subset M \times N$ be a $\sigma$-compact set which is negligible
for all $\gamma \in \Pi_0$, and denote its complement by $\tilde {\mathcal S} := (M \times N) \setminus E_0$.
Let $E_\infty$ 
denote the set of points which occur in $k$-chains in $\tilde {\mathcal S}$ for arbitrarily large $k$.  Then $E_\infty$ 
and its projections  $\pi^M(E_\infty)$ and $\pi^N(E_\infty)$
are Borel.
If $\gamma( 
E_\infty)=0$
for every $\gamma \in \Pi_0$,  then $\Pi_0$ is a singleton. %
\end{theorem}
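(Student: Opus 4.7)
The plan is to argue by contradiction. Assuming that $\Pi_0$ contains two distinct elements $\gamma_0,\gamma_1$, I will exhibit a positive-$\gamma$-measure subset of $E_\infty$ for $\gamma := \tfrac12(\gamma_0+\gamma_1) \in \Pi_0$, directly contradicting the hypothesis. Taking the Hahn--Jordan decomposition $\gamma_0-\gamma_1 = \sigma^+-\sigma^-$, the positive measures $\sigma^\pm$ are nonzero, mutually singular, and share a common $M$-marginal $\rho_1 \ll \mu$ and a common $N$-marginal $\rho_2 \ll \nu$. I choose disjoint Borel carriers $A^\pm$ for $\sigma^\pm$ and disintegrate $\sigma^\pm = \rho_1\otimes p^\pm_x = \rho_2\otimes q^\pm_y$; mutual singularity gives $p^+_x \perp p^-_x$ for $\rho_1$-a.e.\ $x$ and $q^+_y \perp q^-_y$ for $\rho_2$-a.e.\ $y$.

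The next ingredient is Kantorovich duality. Let $(\phi,\psi)$ be an optimal pair of $c$-conjugate potentials, so $\phi(x)+\psi(y)\leq c(x,y)$ with equality on the support of every $\gamma\in \Pi_0$. By Rademacher's theorem and absolute continuity of $\mu,\nu$, the potentials $\phi,\psi$ are differentiable $\mu$- and $\nu$-a.e., and the first-order condition on the equality set yields $\nabla_x c(x,y)=\nabla\phi(x)$ and $\nabla_y c(x,y)=\nabla\psi(y)$ for $\gamma$-a.e.\ $(x,y)$. Absorbing the $\sigma$-compact complement of a full-measure good set into $E_0$ is harmless, since it can only shrink $E_\infty$ and therefore preserves the hypothesis; so I may assume that on $\tilde{\mathcal S}$ both gradient identities hold and that $\spt\sigma^+\cup\spt\sigma^-\subset \spt\gamma\cap\tilde{\mathcal S}$ is $c$-cyclically monotone. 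Consequently, whenever two points of this set share an $x$- or $y$-coordinate, the gradient-equality requirements of (\ref{altc1})--(\ref{altc2}) come for free, and only the combinatorial alternation need be verified.

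The chain construction then proceeds iteratively. For $\sigma^+$-a.e.\ $(x_1,y_1)\in A^+$, the conditional $p^-_{x_1}$ is a probability singular to $p^+_{x_1}$; it cannot equal $\delta_{y_1}$, for then $(x_1,y_1)\in A^-\cap A^+$, contradicting disjointness. So some $y_2\neq y_1$ satisfies $(x_1,y_2)\in A^-$. The same argument applied to $q^+_{y_2}$ yields $x_3\neq x_1$ with $(x_3,y_2)\in A^+$, and iteration produces a sequence alternating between $A^+$ and $A^-$ whose consecutive pairs agree in exactly one coordinate, as required by Definition~\ref{Laltchain}. A Fubini-type argument on the iterated disintegrations shows that the set of starting points admitting extensions to $k$-chains for every $k$ has full $\sigma^+$-measure. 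Hence $\sigma^+(E_\infty)=\|\sigma^+\|>0$, and since $\gamma\geq\tfrac12\sigma^+$, this gives $\gamma(E_\infty)>0$, the sought contradiction.

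Measurability of $E_\infty$ and its projections is a largely technical matter: the alternant-chain conditions carve out a Borel subset of $(\tilde{\mathcal S})^k$, and a careful exhaustion of $\tilde{\mathcal S}$ via $\sigma$-compactness of $E_0$ realises the set of first-entries of $k$-chains as Borel, so that $E_\infty=\bigcap_k E_k$ and its projections inherit the property. The principal substantive obstacle will be the measure-theoretic bookkeeping in the iterative chain-extension: along the branching tree of possible continuations, the compounded conditional exceptional sets could in principle consume the starting mass, and ruling this out requires the multivalued-dynamics framework announced in the introduction, via an argument in the spirit of Poincar\'e recurrence guaranteeing that a positive-measure set of initial conditions survives to every chain-depth.
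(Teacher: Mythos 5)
Your strategy is genuinely different from the paper's. The paper does not argue by contradiction: it stratifies $\tilde{\mathcal S}\setminus E_\infty$ by the ``level'' of each point (the longest chain terminating there), shows via Lemma \ref{L-key} that the strata assemble into Borel graphs and antigraphs forming a numbered limb system, and then invokes the extremality criterion of Theorem \ref{THMlimb}. Your route --- Hahn--Jordan decomposition of $\gamma_0-\gamma_1$, disintegration of $\sigma^{\pm}$ against their common marginals, and alternating fiber-hops between the disjoint carriers $A^{\pm}$ to manufacture arbitrarily long chains through $\sigma^+$-almost every point --- proves the contrapositive directly, bypassing Theorem \ref{THMlimb} entirely (at the cost of not exhibiting the limb-system structure). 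The skeleton is sound: the carriers can be taken inside the Gangbo--McCann contact set intersected with the differentiability sets of the potentials, so cyclical monotonicity and the gradient identities of Definition \ref{Laltchain} do come for free, and $\gamma=\tfrac12(\gamma_0+\gamma_1)\ge\tfrac12\sigma^+$ converts $\sigma^+(E_\infty)>0$ into the desired contradiction.

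However, two steps are not actually proved. First, your closing paragraph concedes that the iterated-disintegration bookkeeping is ``the principal substantive obstacle'' and proposes to resolve it by ``an argument in the spirit of Poincar\'e recurrence.'' That is not a proof, and recurrence is not the right tool; the branching-tree worry is also unfounded, because at each depth you need only \emph{one} admissible continuation. The correct closure is an induction on depth: if $Z_k\subset A^-$ is the $\sigma^-$-null set of points of $A^-$ not starting a $k$-chain with first move vertical, then $\int p^-_x\bigl((Z_k)_x\bigr)\,d\rho_1(x)=\sigma^-(Z_k)=0$, so for $\rho_1$-a.e.\ $x$ the fiber $A^-_x\setminus (Z_k)_x$ has full $p^-_x$-mass, hence is nonempty, and every $(x,y)\in A^+$ over such $x$ starts a $(k{+}1)$-chain; only countably many null sets are discarded in total. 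This must be written out. Second, the Borel measurability of $E_\infty$, $\pi^M(E_\infty)$ and $\pi^N(E_\infty)$ is part of the statement and is not ``largely technical'' in the naive sense: the set of points occurring in $k$-chains is a projection from $(M\times N)^k$, and projections of Borel sets are in general only analytic (the paper cites \cite{srivastava98} precisely for this). Lemma \ref{LEM3} circumvents this using compactness of $\mathcal S$ and $\sigma$-compactness of $E_0$: the $k$-tuples with coordinate separations $\ge 1/p$ form compacta, projections of compacta are compact, and projection commutes with decreasing intersections of compacta. Your sketch supplies nothing equivalent. (The uniqueness assertion itself would survive without this, since you exhibit a Borel subset of $E_\infty$ of positive $\gamma$-measure, but the Borel claims still have to be established.)
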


\begin{remark}[Extension to singular marginals]\label{R singular measures}
When $c \in C^{1,1}$,  we can relax the absolute continuity of $\mu$ and $\nu$ in the preceding
theorem provided neither
concentrates positive mass on a $c-c$ hypersurface. Here {\em $c-c$ hypersurface} refers to one which can be parameterized in local coordinates as the graph of a difference of convex functions \cite{Zajicek79} \cite{gm96} \cite{Gigli11}.
\end{remark}

\begin{corollary}[Sufficient notions of rarity]\label{R notions of rarity}
The condition $\gamma(
E_\infty)=0$ in the statement of the theorem, and therefore its conclusions, follow from either 
$\mu(\pi^M(
E_\infty))=0$ or $\nu(\pi^N(
E_\infty))=0$.
\end{corollary}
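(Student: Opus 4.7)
The plan is to deduce this corollary from Theorem \ref{THM1} by a direct marginal comparison, using only the Borel measurability of the projections $\pi^M(E_\infty)$ and $\pi^N(E_\infty)$ which is already asserted in the theorem. The core observation is the trivial inclusion
$$
E_\infty \subset \pi^M(E_\infty) \times N \quad \text{and} \quad E_\infty \subset M \times \pi^N(E_\infty),
$$
from which every transport plan $\gamma \in \Pi(\mu,\nu)$ satisfies
$$
\gamma(E_\infty) \leq \gamma\bigl(\pi^M(E_\infty) \times N\bigr) = \mu\bigl(\pi^M(E_\infty)\bigr)
$$
and symmetrically $\gamma(E_\infty) \leq \nu(\pi^N(E_\infty))$, since $\gamma$ has marginals $\mu$ and $\nu$ and the projections are Borel.

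Thus, if either $\mu(\pi^M(E_\infty)) = 0$ or $\nu(\pi^N(E_\infty)) = 0$, then $\gamma(E_\infty) = 0$ for every $\gamma \in \Pi(\mu,\nu)$, and in particular for every $\gamma \in \Pi_0$. At this point the hypothesis of Theorem \ref{THM1} is satisfied, so $\Pi_0$ is a singleton, as desired.

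There is no real obstacle here beyond invoking the Borel measurability of $\pi^M(E_\infty)$ and $\pi^N(E_\infty)$ so that applying the marginals of $\gamma$ to the cylinders is legitimate; this is precisely what Theorem \ref{THM1} asserts. The proof is essentially a one-line measure-theoretic reduction, and the substance of the corollary lies entirely in Theorem \ref{THM1} itself.
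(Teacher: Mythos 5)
Your proof is correct and is precisely the intended argument: the paper leaves this corollary without an explicit proof because it follows immediately from the inclusion $E_\infty \subset \pi^M(E_\infty)\times N$ (resp.\ $M\times\pi^N(E_\infty)$), the marginal condition on $\gamma$, and the Borel measurability of the projections established via Corollary \ref{CorProj}. Nothing is missing.
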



If there is a uniform bound $K$ on the length of all chains in $M \times N$,
then our theorem applies a fortiori with ${\mathcal S} = M \times N$ and $E_0=\emptyset$,
since $
E_\infty=\emptyset$. 
We shall see this occurs in many cases of interest, including for the smooth costs that
we construct on compact manifolds with arbitrary topology.  The bound $K$ will depend on the 
topology. 
On the other hand, an obstruction to the uniqueness of 
optimal plans is the existence of a non-negligible set of periodic orbits. 
As shown below, such a property is not typical:  it fails to occur for costs $c$ in a countable
intersection $\mathcal C$ of open dense sets.  Such a countable intersection is
called {\em residual}.


\begin{theorem}[Costs admitting cyclic chains are non-generic]\label{THM2}
When $\dim M=\dim N$, 
there is a residual set $\mathcal{C}$ in $C^{\infty}(M\times N;\R)$ such that no cost in $\mathcal{C}$ admits
cyclic chains,  and for every cost $c\in \mathcal{C}$, there is a nonempty closed set $\Sigma \subset M\times N$ of zero (Lebesgue) volume such that 
$$
 \frac{\partial^2 c}{\partial x \partial y}(x,y) \quad \mbox{is invertible for any } \, (x,y) \in M\times N \setminus \Sigma.
 $$
\end{theorem}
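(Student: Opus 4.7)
My plan is to prove the two assertions separately and intersect their residual sets.

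\textbf{Part 1 (generic absence of cyclic chains).} Fix $k \geq 2$. Collapsing the alternation of Definition \ref{Laltchain}, a cyclic chain of length $L = 2k$ is determined by $k$ distinct points $X_1, \ldots, X_k \in M$ and $k$ distinct $Y_1, \ldots, Y_k \in N$, giving the $2k$ pairs $(X_1,Y_1), (X_1,Y_2), (X_2,Y_2), \ldots, (X_k,Y_k), (X_k,Y_1)$. The alternation conditions (\ref{altc1})--(\ref{altc2}), together with the wrap-around needed for the periodic extension (\ref{periodic chain}) to itself constitute a chain, translate into $2k$ vector equations
\begin{equation*}
\partial_x c(X_i, Y_i) = \partial_x c(X_i, Y_{i+1}), \qquad
\partial_y c(X_i, Y_{i+1}) = \partial_y c(X_{i+1}, Y_{i+1})
\end{equation*}
(cyclic indices $i = 1, \ldots, k$), amounting to $2kn$ scalar equations in the $2kn$-dimensional parameter space. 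These are naively balanced, but the required $c$-cyclic monotonicity of the chain set yields one further scalar equation: applying it to the reordering that fixes each diagonal pair $(X_i, Y_i)$ in place and evaluates $c$ at the diagonal pair $(X_{i+1}, Y_{i+1})$ in place of each off-diagonal pair $(X_i, Y_{i+1})$ gives $2 S_{\mathrm{diag}} \geq S_{\mathrm{diag}} + S_B$ (writing $S_{\mathrm{diag}} = \sum_i c(X_i, Y_i)$ and $S_B = \sum_i c(X_i, Y_{i+1})$); the complementary reordering yields $2 S_B \geq S_{\mathrm{diag}} + S_B$, and together they force the equality $S_{\mathrm{diag}} = S_B$. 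Bundling these $2kn + 1$ equations into a map $\Psi_k : C^\infty(M \times N; \R) \times (M^k \setminus \Delta) \times (N^k \setminus \Delta) \to \R^{2kn + 1}$, bump-function perturbations supported near each of the $2k$ distinct chain points show that $\Psi_k$ is a submersion in $c$ at every zero. The parametric transversality theorem then supplies a residual set of $c$'s for which $\Psi_k(c, \cdot, \cdot)$ is transverse to $\{0\}$; since the parameter dimension $2kn$ is strictly less than the target dimension $2kn+1$, the preimage is forced to be empty. Openness is recovered by restricting the parameter space to the compact slice $K_n^k = \{(X, Y) : d(X_i, X_j) \geq 1/n, \, d(Y_i, Y_j) \geq 1/n \ \forall i \neq j\}$; intersecting the resulting open dense sets over $n \in \N^*$ and $k \geq 2$ yields a residual set $\mathcal{C}$ admitting no cyclic chain.

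\textbf{Part 2 (generic non-degeneracy of $\partial^2_{xy} c$).} The condition $\det \partial^2_{xy} c = 0$ is the pullback by $j^2 c$ of a real algebraic subvariety of $J^2(M \times N; \R)$, stratified by the rank of the mixed block, with top stratum of codimension one. Thom's transversality theorem gives a residual set of $c$ whose $2$-jet is transverse to every stratum, making $\Sigma$ a finite union of smooth submanifolds of $M \times N$ of codimension at least one, hence closed and of Lebesgue measure zero. Nonemptiness is automatic for every smooth $c$: for any $x_0 \in M$, the smooth map $y \mapsto \partial_x c(x_0, y)$ sends the compact $N$ into the non-compact $T^*_{x_0} M \cong \R^n$, so cannot be a local diffeomorphism and therefore has a critical point, where $\partial^2_{xy} c$ is singular. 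Intersecting this residual set with the $\mathcal{C}$ from Part 1 completes the proof.

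\textbf{Main obstacle.} The decisive step is the identification of the extra scalar equation $S_{\mathrm{diag}} = S_B$ forced by $c$-cyclic monotonicity. Without it, the defining system of a cyclic chain is exactly balanced and naive transversality delivers only a discrete (potentially nonempty) chain set --- and such isolated chains would in fact persist under small perturbation by the implicit function theorem, ruling out generic emptiness. Finding the correct pair of reorderings whose opposite cyclic-monotonicity inequalities collapse into an equality is the combinatorial heart of the argument; once this over-determination by one is secured, the transversality machinery is standard.
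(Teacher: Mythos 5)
Your proposal is correct and follows essentially the same route as the paper's proof: the extra scalar equation $S_{\mathrm{diag}}=S_B$ extracted from the two opposing cyclic-monotonicity inequalities is precisely the paper's Lemma \ref{LEMperiodic}, and your dimension count ($2kn$ parameters against $2kn+1$ equations) is the same off-by-one overdetermination the paper exploits via the Multijet Transversality Theorem (codimension $2nL+1$ of $W_L$ exceeding the $2nL$-dimensional source), with parametric transversality playing an interchangeable role. Part 2 likewise reproduces the paper's stratified Thom-transversality argument on $2$-jets, including the compact-image argument for the nonemptiness of $\Sigma$.
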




In the terminology of Hestir and Williams \cite{hw95}, the absence of cyclic chains is sufficient to define (formally)  a rooting set whose measurability would be sufficient for uniqueness. We refer the reader to Section \ref{SEClimb} for further details on their approach and its aftermath \cite{bc09} \cite{akm11} \cite{Moameni14p}. We do not know if uniqueness of optimal plans between absolutely continuous measures holds for generic costs.  However, elaborating on a celebrated result by Ma\~n\'e \cite{mane96} in the framework of Aubry-Mather theory, we are able to prove that  uniqueness of optimal transport plans holds for generic costs in $C^k$ if the marginals are fixed.   In $C^0$, such a result was known already to Levin \cite{Levin08}.

\begin{theorem}[Optimal transport between given marginals is generically unique]
\label{THM3}
Fix Borel probability mesures on compact manifolds $M$ and $N$. For each $k\in \N \cup \{\infty\}$,
there exists a residual set $\mathcal{C} \subset C^k(M\times N;\R)$ such that for every $c\in \mathcal{C}$,  there is a unique optimal plan between $\mu$ and $\nu$.
\end{theorem}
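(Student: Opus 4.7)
The plan is to adapt Mañé's generic-uniqueness strategy from Aubry-Mather theory to the Kantorovich functional. Throughout, let $J:C^k(M\times N;\R)\to \R$ be defined by
$J(c):= \min_{\gamma\in\Pi(\mu,\nu)}\int c\,d\gamma$. As an infimum of linear functionals over the weak-$*$ compact set $\Pi(\mu,\nu)$, $J$ is concave and upper semicontinuous; since $|J(c_1)-J(c_2)|\le \|c_1-c_2\|_{C^0}$, it is continuous on $C^k$ for every $k\in \N\cup\{\infty\}$. Denoting by $\Pi_0(c)$ the set of optimizers for cost $c$, I would first establish the envelope identities
\begin{equation*}
J_+'(c;\phi)=\min_{\gamma\in\Pi_0(c)}\int\phi\,d\gamma,\qquad J_-'(c;\phi)=\max_{\gamma\in\Pi_0(c)}\int\phi\,d\gamma,
\end{equation*}
for every $\phi\in C^k$. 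The upper bound on $J_+'$ is immediate from $J(c+t\phi)\le \int (c+t\phi)\,d\gamma$ for any fixed $\gamma\in\Pi_0(c)$; the matching lower bound uses weak-$*$ compactness of $\Pi(\mu,\nu)$ together with upper semicontinuity of the correspondence $c\mapsto\Pi_0(c)$ to extract a cluster point in $\Pi_0(c)$ of any sequence $\gamma_{t_k}\in \Pi_0(c+t_k\phi)$ with $t_k\downarrow 0$.

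Next I would reduce uniqueness to non-degeneracy of these derivatives. Fix a countable family $\{\phi_n\}_{n\in\N}$ dense in $C^k(M\times N;\R)$; since $C^k$ is dense in $C^0$, $\{\phi_n\}$ is also dense in $C^0$, so any two Borel probability measures on $M\times N$ agreeing on all $\int\phi_n$ coincide. Set $\eta_n(c):=J_-'(c;\phi_n)-J_+'(c;\phi_n)\ge 0$. Then the set
\begin{equation*}
\mathcal{C}\ :=\ \bigcap_{n\in\N}\bigcap_{m\in\N^*}\bigl\{c\in C^k:\eta_n(c)<1/m\bigr\}
\end{equation*}
is precisely the set of costs for which $\Pi_0(c)$ is a singleton. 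Because $c\mapsto J_+'(c;\phi_n)$ is an infimum over $t>0$ of continuous maps $c\mapsto (J(c+t\phi_n)-J(c))/t$, it is upper semicontinuous; dually $c\mapsto J_-'(c;\phi_n)$ is lower semicontinuous, so $\eta_n$ is lower semicontinuous and each $F_{n,m}:=\{c:\eta_n(c)\ge 1/m\}$ is closed.

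The heart of the argument is the Mañé observation that each $F_{n,m}$ has empty interior. Given $c_0\in C^k$ and a neighborhood $V$ of $c_0$, consider the real-valued concave function $g(t):=J(c_0+t\phi_n)$ on $\R$. A concave function of one variable is differentiable off a countable set; pick such a $t$ with $|t|\,\|\phi_n\|_{C^j}$ small enough (for the relevant seminorms $j$) that $c_0+t\phi_n\in V$. Then $J_-'(c_0+t\phi_n;\phi_n)=g_-'(t)=g_+'(t)=J_+'(c_0+t\phi_n;\phi_n)$, so $\eta_n(c_0+t\phi_n)=0$ and $c_0+t\phi_n\in V\setminus F_{n,m}$. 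Since $C^k(M\times N;\R)$ is a Baire space for every $k\in\N\cup\{\infty\}$ (separable Banach for $k$ finite, separable Fréchet for $k=\infty$), the countable union $\bigcup_{n,m}F_{n,m}$ is meager and $\mathcal{C}$ is residual, which is the conclusion.

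The main obstacle is justifying the envelope identity rigorously in the Kantorovich setting: it requires combining weak-$*$ compactness of $\Pi(\mu,\nu)$, the upper semicontinuity in $c$ of the optimizer correspondence, and continuity of the pairing $(\phi,\gamma)\mapsto\int\phi\,d\gamma$. Once this identification is secured, the Baire-category argument runs uniformly across $k\in\N\cup\{\infty\}$, and specializes to Levin's $C^0$ statement at $k=0$.
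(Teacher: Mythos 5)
Your argument is correct in substance and reaches the theorem by a genuinely different route from the paper's. The paper (following Bernard--Contreras's rendering of Ma\~n\'e) shows that for each $l$ the set of costs with $\mbox{diam}(\mathcal{M}(c))<1/l$ is open and dense, the density coming from a finite-dimensional reduction: finitely many test integrals $\pi\mapsto\int c_j\,d\pi$ are chosen so that all their joint level sets in $\Pi(\mu,\nu)$ have diameter $<\epsilon$, and Rademacher's theorem applied to a convex conjugate on $\R^{\bar l}$ then produces a perturbation $c_0-\sum_j \bar y_j c_j$ whose entire optimal facet sits inside one small fiber. You instead control the optimal facet one direction at a time: for each test function $\phi_n$ the concave value function $t\mapsto J(c_0+t\phi_n)$ is differentiable off a countable set of $t$, and at such $t$ the envelope (Danskin) identity forces all optimizers of $c_0+t\phi_n$ to agree on $\int\phi_n$; intersecting over a countable family $\{\phi_n\}$ dense in $C^0$ then pins the optimizer down. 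Your density step is more elementary (one-variable convex analysis in place of the fiber-diameter lemma plus Rademacher in $\R^{\bar l}$), at the price of needing the separating family at the end; both are instances of the same Ma\~n\'e mechanism and both run uniformly in $k\in\N\cup\{\infty\}$.

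One slip to fix: the semicontinuity directions are swapped. Since $J$ is concave, the difference quotient $t\mapsto \bigl(J(c+t\phi_n)-J(c)\bigr)/t$ is nonincreasing on $(0,\infty)$, so $J_+'(c;\phi_n)$ is the \emph{supremum} over $t>0$ of continuous functions of $c$, hence lower semicontinuous; dually $J_-'(c;\phi_n)$ is an infimum (over $t<0$) of continuous functions, hence upper semicontinuous. Consequently $\eta_n=J_-'-J_+'$ is upper semicontinuous --- not lower semicontinuous as written --- and it is precisely this upper semicontinuity that makes the superlevel set $F_{n,m}=\{\eta_n\ge 1/m\}$ closed; lower semicontinuity would give closedness of sublevel sets, which is not what you need. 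With that correction the Baire category argument closes exactly as you describe.
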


The paper is organized as follows. We provide examples of costs satisfying the above results in Section \ref{SECexamples}. We develop preliminaries on numbered limb systems and details on Hestir and Williams' rooting sets in Section \ref{SEClimb}. We give the proofs of Theorem \ref{THM1} in Section \ref{SECproofsTHM1}, of Theorem \ref{THM2} in Section \ref{SECproofTHM2}, and finally of Theorem \ref{THM3} in Section \ref{SECproofTHM3}.


\section{Examples and applications}\label{SECexamples}

\subsection{Quadratic cost on a strictly convex set}

Let us begin by recasting an example of Gangbo and McCann  \cite{gm00} into the framework
of (alternant) chains.


Fix $N \subset \R^{m+1}$. Let $\M$ be the boundary of a strictly convex body $ \Omega \subset \R^{m+1}$, 
that is a closed set which is the boundary of a bounded open convex set 
and such that for any $z,z'\in \M$,
$$
[z,z']  \subset M \quad \Longrightarrow \quad z=z',
$$
where $[z,z']$ is the segment joining $z$ to $z'$.
We aim to show that for any measures $\mu$ and $\nu$ ($\mu$ being absolutely continuous w.r.t. the Hausdorff $m$-dimensional measure $\mathcal{H}^{m}$ measure on $\M$), we have uniqueness of optimal plans for the cost 
$$
c(x,y) = \frac{1}{2} |x-y|^2 \qquad \forall (x,y) \in \M \times N.
$$
Let $\mathcal{P}(\M\times N)$ denote the Borel probability measure on $\M \times N$ and 
$\pi^M: \M \times N \rightarrow \M$ and $\pi^N: \M \times N \rightarrow N$
the projections onto the first and second variables. Let $\mu$ and $\nu$ be probability measures 
on $\M$ and $N$. We recall that the support 
$\Gamma \subset \M \times N$ of any  plan 
$\bar{\gamma} \in \mathcal{P}(\M \times N)$ minimizing 
\begin{eqnarray}\label{Kantorovitch inf}
\inf \left\{ \int_{\M \times N} c(x,y) d\gamma (x,y) \, \vert \, \gamma \in \Pi(\mu,\nu) \right\}
\end{eqnarray}
is $c$-cyclically monotone, which in the case $c(x,y)=|y-x|^2 /2$ reads  
$$
\sum_{i=1}^I \langle y_i, x_i - x_{i+1}\rangle \geq 0,
$$
 for all positive integer $I$, $i=1, \ldots, I$, $(x_i,y_i) \in A$, $x_{I+1}=x_1$. The uniqueness of optimal plans will follow easily from the next lemma.

\begin{lemma}[Interior links are never exposed]
\label{lemmahyperplane}
Fix a hypersurface
$M \subset \R^{m+1}$, possibly incomplete.
For any submanifold $N \subset \R^{m+1}$ of dimension $n\le m+1$, let
$c(x,y)$ denote the restriction of $\frac12 |x-y|^2$ to $M \times N$.
If $((x_0,y), (x_2,y),(x_2,y'),(x_4,y'))$ is a chain in $M \times N$,
then no hyperplane strictly separates $x_2$ from $M \setminus \{x_2\}$.
\end{lemma}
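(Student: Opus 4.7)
The plan is to argue by contradiction: suppose a hyperplane $H$ strictly separates $x_2$ from $M\setminus\{x_2\}$, and derive a contradiction by combining the three alternant chain relations with $c$-cyclic monotonicity. Since $M$ is a $C^1$ hypersurface and $H$ exposes $x_2$, differentiating $t\mapsto\langle \gamma(t)-x_2,\eta\rangle$ along any $C^1$ curve $\gamma$ in $M$ through $x_2$ shows that the normal $\eta$ of $H$ annihilates $T_{x_2}M$; thus $H$ is the hyperplane through $x_2$ orthogonal to a unit normal $\nu$ of $M$, and we may orient $\nu$ so that $\langle x-x_2,\nu\rangle<0$ for every $x\in M\setminus\{x_2\}$.

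Next I would unpack each of the three alternant conditions imposed by the chain. For $c(x,y)=\tfrac12|x-y|^2$ restricted to $M\times N$, the partial $\tfrac{\partial c}{\partial y}(x,y)$ acts on $v\in T_yN$ by $\langle y-x,v\rangle$, and symmetrically in $x$. Applying \eqref{altc2} to $(x_0,y),(x_2,y)$ gives $x_2-x_0\perp T_yN$; the same condition applied to $(x_2,y'),(x_4,y')$ gives $x_4-x_2\perp T_{y'}N$; and \eqref{altc1} applied to $(x_2,y),(x_2,y')$ yields $y'-y\perp T_{x_2}M$. Since $T_{x_2}M$ has codimension one in $\R^{m+1}$, the last relation forces $y'-y=\lambda\nu$ for some scalar $\lambda\ne 0$ (nonzero since $y\ne y'$).

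Then I would extract two inequalities from $c$-cyclic monotonicity applied to two-element sub-chains (which inherit cyclic monotonicity from the chain itself). For $\{(x_0,y),(x_2,y')\}$, expanding the squares in $c(x_2,y)+c(x_0,y')\ge c(x_0,y)+c(x_2,y')$ reduces to $\langle x_2-x_0,\,y'-y\rangle\ge 0$, i.e.\ $\lambda\langle x_2-x_0,\nu\rangle\ge 0$. The analogous inequality for $\{(x_2,y),(x_4,y')\}$ gives $\lambda\langle x_4-x_2,\nu\rangle\ge 0$.

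Finally, the hypothetical strict separation, combined with $x_0\ne x_2\ne x_4$ supplied by the chain definition, forces $\langle x_2-x_0,\nu\rangle>0$ and $\langle x_4-x_2,\nu\rangle<0$. Together with the two monotonicity inequalities this yields $\lambda\ge 0$ and $\lambda\le 0$, hence $\lambda=0$, contradicting $y\ne y'$. The conceptual crux---and where I expect the only substantive thinking lies---is the clash between strict separation (which places all of $M\setminus\{x_2\}$, in particular $x_0$ and $x_4$, on a single open half-space bounded by the tangent plane $T_{x_2}M$) and the alternant structure together with cyclic monotonicity (which, via $y'-y=\lambda\nu$, forces $x_0$ and $x_4$ to sit on \emph{opposite} sides of $T_{x_2}M$); the rest is bookkeeping to match each alternant relation to the correct successive pair of the chain.
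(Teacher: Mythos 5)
Your proposal is correct and follows essentially the same route as the paper's proof: identify the separating hyperplane's normal with the normal to $T_{x_2}M$, use the alternant condition $\frac{\partial c}{\partial x}(x_2,y)=\frac{\partial c}{\partial x}(x_2,y')$ to write $y'-y$ as a scalar multiple of that normal, and play the two pairwise monotonicity inequalities against strict separation to force the scalar to vanish, contradicting $y\ne y'$. Your version merely spells out two steps the paper leaves implicit (why the exposing hyperplane is tangent at $x_2$, and the expansion of the two-point monotonicity inequalities), which is fine.
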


\begin{proof}
To derive a contradiction, suppose
$((x_0,y), (x_2,y),(x_2,y'),(x_4,y'))$ forms a chain in $M \times \R^{m+1}$, 
yet $x_2$ is strictly separated from $M \setminus \{x_2\}$
by a hyperplane with inward normal $n_2$, i.e. 
\begin{equation}\label{strict separation}
\langle x-x_2,n_2 \rangle > 0
\end{equation}
for all $x \in M \setminus \{x_2\}$.    The chain conditions imply
$y'-y = \alpha n_2$ for some $\alpha \in \R$.

On the other hand,  pairwise monotonicity of the points in the chain imply
\begin{eqnarray*}
\langle x_4-x_2,y'-y \rangle = & \alpha \langle x_4-x_2,n_2 \rangle & \ge 0
\\ \langle x_2-x_0,y'-y \rangle = & \alpha \langle x_2-x_0,n_2 \rangle & \ge 0.
\end{eqnarray*}
Using  \eqref{strict separation} we deduce $\alpha \ge 0$ from the first inequality
and $\alpha \le 0$ from the second. But $\alpha =0$ yields
$y'=y$,  contradicting the definition of a chain.
\end{proof}

As a consequence we have:

\begin{corollary}[Chain bounds for strictly convex hypersurfaces]
\label{corollaryconvex}
If $c$ is the restriction of $|x-y|^2$ to $M \times N$ as above, where $M \subset \R^{m+1}$
is a strictly convex hypersurface,
then $\M \times N$ contains no chain of length $ L \geq 5$.
Moreover,  the projection of any $4$-chain in $M\times N$ onto $N$ consists of three distinct points,
while its projection onto $M$ consists of two distinct points.
If $N \subset \R^{m+1}$ is also a strictly convex hypersurface, then
$M \times N$ contains no chain of length $L \ge 4$.
\end{corollary}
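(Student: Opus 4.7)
The plan is to reduce each assertion of the corollary to the preceding Lemma (Interior links are never exposed) applied to suitable four-term blocks of the chain at hand, using the fact that strict convexity of $M$ ensures every $x \in M$ is strictly exposed by a supporting hyperplane.

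First I would describe the structural form of an $L$-chain with $L \ge 2$. By Definition \ref{Laltchain}, such a chain either starts with an $x$-link \eqref{altc1} or a $y$-link \eqref{altc2}, and the link type must alternate thereafter: each of \eqref{altc1}--\eqref{altc2} specifies not only the sharing at step $l$ but also the identity of $y_{l+1}$ with $y_{l+2}$ (resp.~$x_{l+1}$ with $x_{l+2}$), forcing the next step to be of the opposite type. Hence a chain starting with an $x$-link has the canonical form $((x_1,y_1),(x_1,y_2),(x_3,y_2),(x_3,y_3),(x_5,y_3),\ldots)$ while one starting with a $y$-link has the form $((x_1,y_1),(x_2,y_1),(x_2,y_3),(x_4,y_3),(x_4,y_5),\ldots)$, all explicitly indicated labels being pairwise distinct. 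A routine check of Definition \ref{Laltchain} shows that any block of four consecutive entries is itself a chain: the restriction of a $c$-cyclically monotone set remains $c$-cyclically monotone, the inequalities and two-step-ahead coincidences are inherited, and the derivative identities carry over.

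To prove the non-existence of $L$-chains with $L \ge 5$ I would argue by contradiction, extracting from such a chain four consecutive entries matching the Lemma's hypothesis $((x_0,y),(x_2,y),(x_2,y'),(x_4,y'))$: positions $2,3,4,5$ in the $x$-link-starting form (interior $M$-coordinate $x_3$), or positions $1,2,3,4$ in the $y$-link-starting form (interior $M$-coordinate $x_2$). The Lemma then forbids this interior $M$-coordinate from being strictly separated from $M$ by a hyperplane, contradicting the strict convexity of $M$.

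The same argument rules out the $y$-link-starting form for any $4$-chain in $M\times N$, leaving only $((x_1,y_1),(x_1,y_2),(x_3,y_2),(x_3,y_3))$ with $x_1 \ne x_3$ and $y_1,y_2,y_3$ pairwise distinct, whose projections onto $M$ and $N$ consist of exactly two and three points respectively. If $N$ is also a strictly convex hypersurface, this surviving $4$-chain has interior $N$-coordinate $y_2$ flanked by distinct $y_1,y_3$; since $c(x,y)=\tfrac12|x-y|^2$ is symmetric in its arguments, the Lemma applies in $N \times M$ to the chain read in reverse and forbids $y_2$ from being strictly exposed in $N$, contradicting the strict convexity of $N$. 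The only steps requiring genuine care will be the verification that a four-element sub-block of a longer chain is itself a chain, together with the invariance of the chain conditions under exchange of the two factors, both of which amount to direct inspection of Definition \ref{Laltchain}.
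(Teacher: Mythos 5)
Your overall strategy is the same as the paper's: classify a chain by whether its first link is of type \eqref{altc1} or \eqref{altc2}, extract a four-term consecutive sub-block of the form $((x_0,y),(x_2,y),(x_2,y'),(x_4,y'))$ whose interior $M$-point is exposed by strict convexity, and invoke Lemma \ref{lemmahyperplane}; the final assertion is obtained by symmetry in the two factors. The reductions you perform (which sub-block to extract in each of the two canonical forms, the observation that a consecutive sub-block of a chain is again a chain, and the factor-swap for the last claim) all check out against Definition \ref{Laltchain}.

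There is, however, one genuine gap: your claim that in the canonical forms ``all explicitly indicated labels are pairwise distinct,'' which you present as following from direct inspection of Definition \ref{Laltchain} and which you then use to conclude that the surviving $4$-chain $((x_1,y_1),(x_1,y_2),(x_3,y_2),(x_3,y_3))$ projects onto \emph{three} distinct points of $N$. The definition only yields distinctness of \emph{consecutive} values: $y_1\ne y_2$ and $y_2\ne y_3$. It does not exclude $y_1=y_3$; that is exactly the case of a \emph{cyclic} $4$-chain, which the definition explicitly allows. To rule it out you must argue as the paper does: if $y_1=y_3$ the chain is cyclic, hence extends via \eqref{periodic chain} to an infinite chain, in particular to a chain of length $5$, contradicting the bound you have just established. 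With that one additional step (which uses the first part of the corollary, so the order of your argument is compatible with it), your proof is complete and coincides with the paper's.
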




\begin{proof}
If a chain of length $L \ge 5$ exists, it begins either with 
\begin{equation}\label{4-chain}
((x_1,y_2),(x_3,y_2),(x_3,y_4),(x_5,y_4))
\end{equation}
or 
$((x_2,y_1),(x_2,y_3),(x_4,y_3),(x_4,y_5),(x_6,y_5))$.
Since $M$ is strictly convex, each point $x \in M$ is exposed, meaning it can be strictly separated 
from $M \setminus \{x\}$ by a hyperplane.   In the first case Lemma
\ref{lemmahyperplane} would be violated by the chain \eqref{4-chain} since $x_2$
is an exposed point of $M$;
in the second it would be violated by the chain $((x_2,y_3),(x_4,y_3),(x_4,y_5),(x_6,y_5))$
since $x_4$ is an exposed point of $M$.  We are forced to conclude that no chain of length
$L \ge 5$ can exist.
Moreover,  any chain of length $L=4$ in $M \times N$ must take the 
form $((x_2,y_1),(x_2,y_3),(x_4,y_3),(x_4,y_5))$ hence project onto three points $y_i \in N$.
The $y_i$ must all be distinct since $y_1 \ne y_3 \ne y_5$ from the definition of chain, while
$y_5=y_1$ would make the chain cylic,  in which
case it can be extended to an infinite chain \eqref{periodic chain} contradicting
non-existence of a chain of length $5$.  
The projection onto $M$ therefore consists of the two points $x_2 \ne x_4$,
which are distinct by the definition of chain.

If $N \subset \R^{m+1}$ is also a strictly convex hypersurface then by symmetry,
$M \times N$ can contain no chain which projects to more than two points on $M$
and two points on $N$,  hence no chain of length $L\ge 4$.
\end{proof}



\begin{example}\label{EXlake}
Let us consider the example of the lake that already appeared in \cite{gm00} and \cite{ChiapporiMcCannNesheim10}. Let $M=N$ be the unit circle in the plane, that is the circle centered at the origin of radius $1$ equipped with the quadratic cost $c(x,y)=|y-x|^2 /2$. Consider a small auxiliary circle centered on the vertical axis, for example the circle centered at $(0,-5/2)$ of radius $1/8$, denote by $\tilde{\psi}$ the distance function to the disc $D$ enclosed by the small circle (see Figure \ref{FIGlake}). By construction, $\tilde{\psi}$ is convex and differentiable at every point of $M$ with a gradient of norm 1. 

\begin{figure}[!h]
\label{FIGlake}
\begin{pdfpic} 
\psset{unit=0.75cm}
\begin{pspicture}(12,10)
\pscircle(6,7){2}
\psdots[dotsize=2pt](6,7)
\psdots[dotsize=3pt](5,5.27)
\pscircle[fillstyle=solid,fillcolor=gray](6,2){.35}
\psdots[dotsize=2pt](6,2)
\rput[c](7.5,5){$M$}
\rput[c](6,1.3){$D$}
\rput[c](4.8,5.07){$x$}
\psline[linewidth=0.5pt,linecolor=gray,linestyle=dashed](6,7)(5,5.27) 
\psline[linewidth=0.5pt,linecolor=gray](6,2)(5,5.27) 
\psline[linewidth=0.5pt,linecolor=gray](6,7)(5.415,8.912) 
\psdots[dotsize=3pt](5.415,8.912) 
\rput[r](5.215,9){$\bar{y}(x)$}
\psdots[dotsize=3pt](4.055,6.559)
\rput[r](3.855,6.359){$\hat{y}(x)$}
\psline[linewidth=0.5pt,linecolor=gray,linestyle=dashed](5.415,8.912)(4.055,6.559)
\psline[linewidth=0.25pt,linecolor=lightgray](0,1)(12,1)(12,10)(0,10)(0,1) 
\end{pspicture}
\end{pdfpic}
\caption{The lake}
\end{figure}
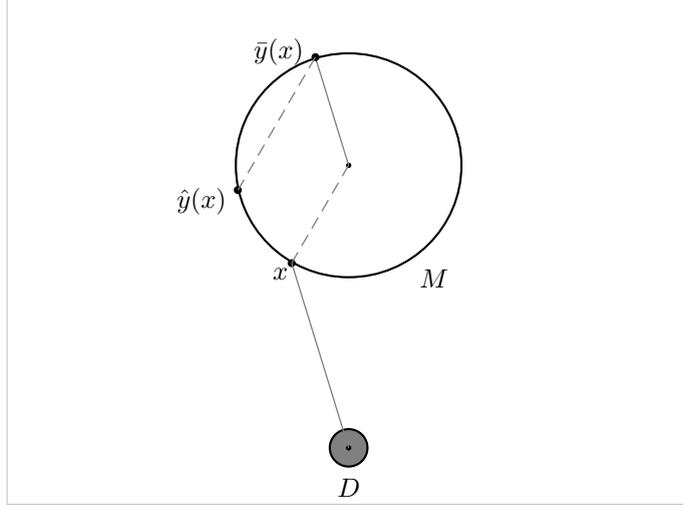

Then we set 
$$
\psi(x) := \tilde{\psi}(x) - \frac{1}{2} |x|^2 \qquad \forall x \in M
$$
and 
$$
\phi(y) := \min \Bigl\{ \psi(x)+c(x,y) \, \vert \, x \in M\Bigr\} \qquad \forall y \in M.
$$
By construction, we check that 
\begin{equation}\label{c-max}
\psi(x)= \max \Bigl\{ \phi(y)-c(x,y) \, \vert \, y \in M\Bigr\} \qquad \forall x \in M.
\end{equation}
Moreover, for every $x\in M$, the gradient $\bar{y}(x) := \nabla_x\tilde{\psi} \in \R^2$ belongs to 
the set $\partial_c\psi(x)\subset M$ of optimizers for \eqref{c-max}. 
As a matter of fact, we have by convexity of $\tilde{\psi}$,
$$
\tilde{\psi}(x') -\tilde{\psi}(x) \geq \langle \bar{y}(x), x'-x\rangle \qquad \forall x, x' \in \R^2.
$$
Which can be written as
$$
\psi(x) \leq \psi(x') + \frac{1}{2} \bigl| x'-\bar{y}(x)\bigr|^2 -\frac{1}{2}  \bigl| x-\bar{y}(x)\bigr|^2   \qquad \forall x, x'\in \R^2.
$$
Taking the minimum over $x'\in M$, we infer that ($\bar{y}(x)\in M$)
$$
c\bigl(x,\bar{y}(x)\bigr) \leq \phi\bigl( \bar{y}(x)\bigr) - \psi(x) \qquad \forall x \in M
$$
which, because $\phi-\psi\leq c$ implies that 
$$
c\bigl(x,\bar{y}(x)\bigr) = \phi\bigl( \bar{y}(x)\bigr) - \psi(x) \qquad \forall x \in M,
$$
which means that $\bar{y}(x)= \nabla_x\tilde{\psi}$ always belongs to $\partial_c\psi(x)$. For every $x\in M$, we set  
$$
\hat{y}(x) := \bar{y}(x) + \lambda(x) x \in M,
$$
where $\lambda(x) \geq 0$ is the largest nonnegative real number $\lambda$ such that $ \bar{y}(x) + \lambda x$ belongs to $M$ (in other terms, $\hat{y}(x)$ is the intersection of the open semi-line starting from $\bar{y}(x)$ with vector $x$ if the intersection is nonempty and $\hat{y}(x)=\bar{y}(x)$ otherwise). For every $x\in M$, the point $\hat{y}(x)$ belongs to $\partial_c\psi(x)$ as well. As a matter of fact, by convexity of $M$, the fact that the normal to $M$ at $x$ is $x$ itself and  the convexity of $\tilde{\psi}$, we have for every $x, x'\in M$,
$$
\langle \hat{y}(x), x'-x\rangle = \langle \bar{y}(x), x'-x\rangle + \lambda(x) \langle x,x'-x\rangle \leq \langle \bar{y}(x), x'-x\rangle \leq \tilde{\psi}(x') -\tilde{\psi}(x).
$$
Proceeding as above we infer that $\hat{y}(x)$ belongs to  $\partial_c\psi(x)$. We can check easily that for every point $x$ close to the south pole $(-1,0)$ the points $\hat{y}(x), \bar{y}(x)$ are distinct (see Figure \ref{FIGlake}). Proceeding as in the proof of Theorem \ref{THMmotivation}, we can construct an example of optimal transport plan which is not concentrated on a graph.
\end{example}

\subsection{Quadratic cost on nested strictly convex sets} 

Let 
\begin{equation}\label{nested convex}
\Omega_1 \subset \Omega_2 \subset \ldots \subset \Omega_L.
\end{equation}
be a nested family of strictly convex bodies with differentiable boundaries in $\R^{m+1}$.
Set $M= \cup_{i=1}^L U_i$, where $U_i = \partial \Omega_i \setminus 
\partial \Omega_{i-1}$
is an embedding of (a portion of) 
the unit sphere.

\begin{figure}[!h]
\label{FIGnested}
\centering
\begin{pdfpic}
\centering 
\psset{unit=0.75cm}
\begin{pspicture}(12,10)
\pscircle(6,5){1}
\psellipse(6.25,5.5)(1.5,2.25)
\pscircle(6.75,5.5){3}
\psccurve(4,2)(9,2)(11,8)(3,8)
\rput[c](6.75,6.05){$\mathcal{S}_1$}
\rput[c](7.4,3.5){$\mathcal{S}_2$}
\rput[c](8.95,8){$\mathcal{S}_3$}
\rput[c](9,1.5){$\mathcal{S}_4$}
\psline[linewidth=0.25pt,linecolor=lightgray](0,1)(12,1)(12,10)(0,10)(0,1) 
\end{pspicture}
\end{pdfpic}
\caption{Nested convex sets}
\end{figure}
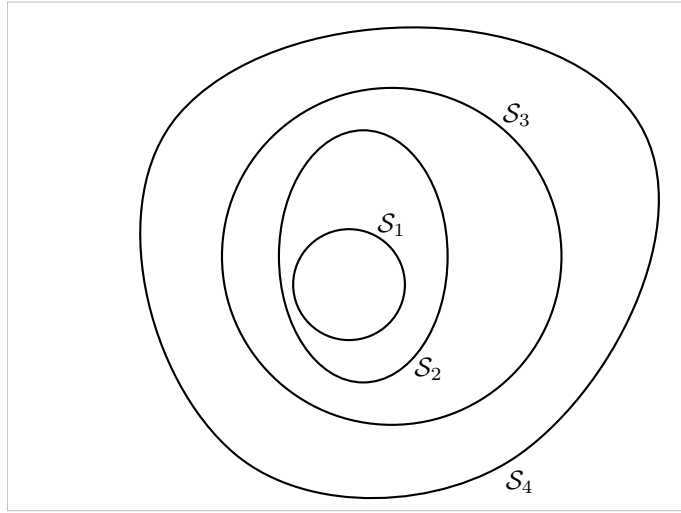


\begin{lemma}[Chain length bounds for nested strictly convex boundaries]
\label{lemmanested}
If $c(x,y)$ denotes the restriction of $\frac 12 |x-y|^2$ to $C^1$ manifolds 
$M_L,N\subset \R^{m+1}$,
and $\M_L:= \partial \Omega_1 \cup \ldots \cup \partial \Omega_L$ 
is a union
of boundaries of a nested sequence \eqref{nested convex}
of strictly convex bodies $\Omega_i \subset \R^{m+1}$, then $M_L \times N$ 
contains no chain of length $4L+1$.  Moreover,  any chain of length $4L$ 
has projections onto $M_L$ (respectively $N$) which consist of $2L$ (respectively $2L+1$) distinct points.
\end{lemma}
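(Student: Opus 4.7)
The plan is to induct on $L$; the case $L=1$ is exactly Corollary \ref{corollaryconvex}. The geometric input driving the induction is that the exposed points of $M_L$ coincide with the outermost shell $\partial\Omega_L$. Indeed $M_L\subset\Omega_L$, so by strict convexity of $\Omega_L$ the supporting hyperplane to $\Omega_L$ at any $x\in\partial\Omega_L$ strictly separates $x$ from $\Omega_L\setminus\{x\}\supset M_L\setminus\{x\}$, while any point of $M_L$ not on $\partial\Omega_L$ lies in the interior of $\Omega_L$, so every hyperplane through it meets $\partial\Omega_L\subset M_L$ on both sides. Hence the non-exposed points of $M_L$ are all contained in $M_{L-1}:=\partial\Omega_1\cup\cdots\cup\partial\Omega_{L-1}$.

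For the inductive step, let $((x_l,y_l))_{l=1}^\ell$ be a chain of length $\ell\ge 4L+1$ in $M_L\times N$. Lemma \ref{lemmahyperplane} applied to each interior length-$4$ sub-chain of the form $((x_{l-1},y_l),(x_{l+1},y_l),(x_{l+1},y_{l+2}),(x_{l+3},y_{l+2}))$ forces its middle $x$-coordinate to be non-exposed in $M_L$, hence to lie in $M_{L-1}$. A short index count in each of the two possible alternation patterns (whether the chain begins with an $x$-stay or a $y$-stay) shows that excising at most the first two and last two positions of the chain leaves a consecutive sub-chain of length at least $\ell-4\ge 4L-3$ all of whose $x$-coordinates are such hubs and hence lie in $M_{L-1}$. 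Since $c$-cyclic monotonicity and the alternation conditions of Definition \ref{Laltchain} are inherited by consecutive sub-sequences, this gives a chain of length $\ge 4L-3$ in $M_{L-1}\times N$, contradicting the inductive hypothesis.

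For chains of length exactly $\ell=4L$, I would apply the same excision: when the chain begins with a $y$-stay step the two extremal $x$-values each occupy only a single position, so excising a single position at each end yields a sub-chain of length $\ge 4L-2\ge 4L-3$ in $M_{L-1}\times N$, again contradicting induction. Hence every $4L$-chain begins and ends with an $x$-stay, so its projections onto $M_L$ and $N$ contain at most $2L$ and $2L+1$ points respectively. To promote these bounds to equalities I would rule out any coincidence between non-adjacent chain entries in $M_L$ or in $N$: any such repetition permits extraction of a cyclic sub-chain (in the sense of Definition \ref{Laltchain}), which via the periodic extension \eqref{periodic chain} yields an infinite chain in $M_L\times N$ and hence chains of arbitrary length, contradicting the bound $\ell\le 4L$ just established.

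The main technical obstacle is the case-by-case book-keeping in the excision step: in each of the four subcases (chain length $4L$ or $4L+1$, beginning with $x$-stay or $y$-stay) one must verify both the precise length of the excised sub-chain and that every one of its $x$-coordinates is indeed the hub of some interior length-$4$ sub-chain of the original. The closing cyclic-extraction argument is conceptually straightforward but requires selecting the sub-interval of indices so that the resulting sub-chain satisfies $y_{\text{last}}=y_{\text{first}}$ with $x_{\text{last}}\neq x_{\text{first}}$, matching the definition of cyclic.
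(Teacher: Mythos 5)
Your treatment of the length bound is essentially the paper's proof: induction on $L$ with Corollary \ref{corollaryconvex} as base case, the observation that every point of $M_L\setminus M_{L-1}$ lies on $\partial\Omega_L$ and is therefore exposed, Lemma \ref{lemmahyperplane} to force every hub $x$-coordinate of an interior $4$-window into $M_{L-1}$, and excision of the endpoints. Your counts ($\ell-4\ge 4L-3$ when $\ell\ge 4L+1$, and the exclusion of the ``$y$-stay first'' pattern for $\ell=4L$) are correct, and consecutive sub-chains are indeed chains. That half of the argument is sound and matches the paper.

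Where you diverge --- and where there is a real gap --- is the distinctness of the projections of a $4L$-chain. The paper does \emph{not} dispose of all repetitions by cycle extraction: it notes that the interior positions of the $4L$-chain (those whose $x$'s have been pushed into $M_{L-1}$) form a $4(L-1)$-chain in $M_{L-1}\times N$, so distinctness of all interior points comes for free from the inductive hypothesis; the cycle argument is reserved only for the four boundary coordinates $x_2$, $x_{\rm last}$, $y_1$, $y_{\rm last}$. Your uniform claim that ``any repetition permits extraction of a cyclic sub-chain'' requires more than choosing indices so that the first and last entries agree in one slot. First, a repetition in the $M$-projection yields a loop closing with $x_{\rm last}=x_{\rm first}$ and $y_{\rm last}\ne y_{\rm first}$, which is not cyclic as literally defined in Definition \ref{Laltchain} (that definition demands $y_L=y_1$), so you must at least invoke the symmetric statement. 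Second, and more substantively, for the periodic extension \eqref{periodic chain} to be a chain the \emph{closing} link must itself satisfy \eqref{altc1} or \eqref{altc2}, i.e.\ a gradient-alignment condition such as $\frac{\partial c}{\partial y}(x_{\rm last},y)=\frac{\partial c}{\partial y}(x_{\rm first},y)$ when the repeated coordinate is $y$. The linear chain only tells you that each of the two occurrences of the repeated coordinate is gradient-aligned with its \emph{own} neighbour in the original chain, not that the two occurrences are aligned with each other: for the quadratic cost on hypersurfaces the admissible partners of a fixed point form a line in the normal direction, and the two occurrences may sit on two distinct parallel lines. So an interior repetition does not obviously produce a bona fide cyclic chain. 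The fix is simply to follow the paper: get interior distinctness from the inductive hypothesis, and use the cycle device only for the boundary points (where one should still record the closing gradient condition, as the paper implicitly does).
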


\begin{proof}
We prove the result by induction on $L$. Corollary \ref{corollaryconvex} gives the result for $L=1$. 
So assume that the property is proved for $L\ge1$ and prove it for $L+1$.  
Note that although $M_L$ may not be a submanifold of $\R^{m+1}$
(if the boudaries of $\Omega_i$ and $\Omega_{i+1}$ intersect),  it may be regarded
as $C^1$ embedding of the disjoint union $\bigcup_{i=1}^L U_i$ of potentially incomplete manifolds
$U_i = \partial \Omega_i \setminus \partial \Omega_{i-1}$. 


Any chain in $M_{L+1} \times N$ of length $4L+5$ takes one of the forms
\begin{eqnarray}
\label{first form}
((x_1, y_2),(x_3, y_2),(x_3,y_4), \ldots, (x_{4L+3},y_{4L+4}), (x_{4L+5},y_{4L+4}), (x_{4L+5},y_{4L+6}))
&& \mbox{or} \\
((x_2, y_1),(x_2, y_3),(x_4,y_3), \ldots, (x_{4L+4},y_{4L+3}), (x_{4L+4},y_{4L+5}), (x_{4L+6},y_{4L+5}))&.&
\label{second form}
\end{eqnarray}
Strict convexity of $\partial \Omega_{L+1}$ shows any $x \in \partial \Omega_{L+1}$ can be separated from $M_{L+1} \setminus \{x\}$ by a hyperplane.  Lemma \ref{lemmahyperplane} therefore implies 
$\{x_4, \ldots, x_{4L+3}\} \subset M_L$, so that apart from possibly the first and last pairs of points,  
the chains \eqref{first form}--\eqref{second form} above are contained in
$M_L \times N$. But this contradicts the inductive hypothesis,  which asserts that $M_L \times N$ contains no chain
of length $4L+1$.

Similarly,  if $M_{L+1} \times N$ contains a chain of length $2L+4$, it must take the form of the first
2L+4 points in \eqref{second form} rather than \eqref{first form};  in the latter case $\{x_3,\ldots, x_{4L+3}\} \subset M_L$
whence $M_L \times N$ would contain a chain of length $4L+3$, contradicting the inductive hypothesis.
In the former case,  $\{x_4, \ldots, x_{4L+2}\} \subset M_L$,   whence $M_L \times N$ contains a chain
of length $4L$ which the inductive hypothesis asserts 
is comprised of $2L$ distinct points $X_4^{4L+2} :=\{x_4,x_6,\ldots, x_{4L+2}\}$ 
and $2L+1$ distinct points $Y_3^{4L+3}:=\{y_3,y_5,\ldots,y_{4L+3}\}$.  
Now $x_2$ and $x_{4L+4}$ both lie outside $X_4^{4L+2} \subset M_L$,  
since otherwise $M_L \times N$ contains a chain longer than $4L$.  Moreover $x_2 \ne x_{4L+4}$,
since otherwise we can form a cycle (of length $4L+2$), hence an infinite chain in $M_{L+1} \times N$, 
contradicting the length bound already established.  Similarly,  $y_1 \ne y_{4L+5}$ are disjoint from $Y_3^{4L+3}$,
since otherwise we can extract a cycle and build an infinite chain in $M_{L+1} \times N$.
%
\end{proof}


 

\subsection{Costs on manifolds:}

\begin{lemma}[Diffeomorphism from interior of simplex to punctured sphere]
\label{L:simplex onto sphere}
Fix the standard simplex $\Delta = \{(t_0,\dots,t_m) \mid t_i\ge 0 \mbox{\rm\ and}\ \sum_{i=0}^m t_i =1 )\}$ and unit ball $\Omega = B_1(e_1) \subset \R^{m+1}$ centered at $e_1 = (1,0,\ldots,0) \in \R^{m+1}$.
There is a smooth map $E:\Delta \longrightarrow \partial \Omega$ which acts as a diffeomorphism
from $\Delta \setminus \partial \Delta$ to $\partial \Omega \setminus \{0\}$
such that $E$ and all of its derivatives vanish on the boundary $\partial \Delta$
of the simplex: $E(\partial \Delta) = \{0\}$.
\end{lemma}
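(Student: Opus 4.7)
The plan is to realize $E$ as a composition $E = \sigma \circ \phi$ on $\Delta \setminus \partial\Delta$, where $\sigma : \R^m \to \partial\Omega \setminus \{0\}$ is the inverse of stereographic projection from the pole $0 \in \partial\Omega$, and $\phi : \Delta \setminus \partial\Delta \to \R^m$ is a smooth diffeomorphism chosen so that $|\phi(t)|$ diverges faster than any polynomial in $1/d(t)$ as $t \to \partial\Delta$; here $d(t) := \prod_{i=0}^m t_i$ is a polynomial on the hyperplane containing $\Delta$ which vanishes exactly on $\partial\Delta$. Writing $\R^{m+1} = \R \times \R^m$ so that $e_1 = (1,0)$, one has the explicit formula
\[
\sigma(y) = \left( \frac{8}{4 + |y|^2},\; \frac{4 y}{4 + |y|^2}\right),
\]
a smooth diffeomorphism sending $0 \in \R^m$ to the antipode $N := 2e_1$ and $\infty$ to $0 \in \partial\Omega$. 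In the complementary stereographic chart at $0 \in \partial\Omega$ (projection from $N$), $\sigma(y)$ has local coordinate $\eta = 4 y/|y|^2$, so $|\eta| = 4/|y|$.

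To construct $\phi$ I would first fix a baseline diffeomorphism $\phi_0 : \Delta \setminus \partial\Delta \to \R^m$ with $|\phi_0(t)|$ of polynomial order in $1/d(t)$. A convenient choice is the gradient $\phi_0 := \nabla U$ of the Legendre barrier $U(t) := -\sum_{i=0}^{m} \log t_i$: strict convexity of $U$ makes $\nabla U$ injective, while applying the intermediate-value theorem to the system $\nabla U(t) = \xi$ shows $\nabla U$ is surjective onto $\R^m$, with norm at least $c / \min_i t_i$ for some $c > 0$. Next I would post-compose with a smooth radial diffeomorphism of $\R^m$ of super-polynomial growth, say $g(y) := y \exp(|y|^4)$, so that $\phi := g \circ \phi_0$ remains a diffeomorphism $\Delta \setminus \partial\Delta \to \R^m$ with $|\phi(t)| = |\phi_0(t)| \exp(|\phi_0(t)|^4)$ dominating any polynomial in $1/d(t)$ near $\partial\Delta$. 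Finally I would set $E(t) := \sigma(\phi(t))$ on $\Delta \setminus \partial\Delta$ and $E(t) := 0$ on $\partial\Delta$, making $E$ a diffeomorphism from $\Delta \setminus \partial\Delta$ onto $\partial\Omega \setminus \{0\}$ by construction.

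Smoothness and infinite flatness of $E$ on $\partial\Delta$ are verified in the chart around $0 \in \partial\Omega$: there $E$ is represented by $\eta(t) = 4 \phi(t)/|\phi(t)|^2$, with $|\eta(t)| = 4/|\phi(t)|$. An induction on the order of differentiation shows that each partial derivative $\partial^\alpha \eta(t)$ is a rational expression in $\phi$ and its partials, bounded in absolute value by a polynomial in $1/d(t)$ divided by a positive power of $\exp(|\phi_0(t)|^4)$; since $|\phi_0|^4$ blows up polynomially in $1/d$, the exponential dominates and every derivative extends continuously to zero on $\partial\Delta$. The main technical obstacle is the construction of $\phi$ in the second step: one must simultaneously guarantee global bijectivity onto $\R^m$ and force a growth rate strong enough to absorb the polynomial blow-up of all derivatives of the inversion $\phi/|\phi|^2$. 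The Legendre barrier supplies the polynomial baseline, and the exponential radial rescaling in the target delivers the needed super-polynomial enhancement.
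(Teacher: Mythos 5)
Your construction is correct in outline but follows a genuinely different route from the paper's. The paper factors $E$ through the \emph{closed} disc: it first builds a map $G:\Delta\to D_m$ sending $\partial\Delta$ onto $\partial D_m$ homeomorphically, and since the simplex has corners this $G$ is a priori only Lipschitz, forcing the authors into nonsmooth analysis (generalized Jacobians, the Clarke inverse function theorem, and a variable-scale mollification to recover smoothness in the interior); all the infinite flatness is then carried by a second map $F:D_m\to\bfS^m$ built from the exponential map with a cutoff $f$ whose derivatives vanish at $1$. You instead blow the open simplex up onto all of $\R^m$ by a globally smooth diffeomorphism (the Legendre gradient of the log-barrier, post-composed with a super-polynomially growing radial map) and close up with inverse stereographic projection, so the boundary of the simplex is collapsed ``at infinity'' and its corners never have to be resolved; flatness at $\partial\Delta$ is then a quantitative matter of exponential decay beating polynomial blow-up in a Fa\`a di Bruno induction. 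Your approach buys a cleaner, more self-contained argument (no Clarke machinery), at the price of the derivative estimates, which you assert rather than carry out, and of the Legendre-duality input that $\nabla U$ is a bijection of the open simplex onto the tangent hyperplane. Two small points to tidy: the bound $|\nabla U(t)|\ge c/\min_i t_i$ cannot hold uniformly on the interior (the gradient vanishes at the barycentre), though it does hold near $\partial\Delta$, which is all you use; and you should record the standard lemma that a function smooth in the interior of a convex body whose partials all extend continuously to the boundary is smooth up to the boundary, which is what converts ``every derivative tends to zero'' into the stated conclusion.
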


\begin{proof}
Let $f:[0,1] \rightarrow [1,2]$ be a smooth function satisfying the following properties:
\begin{itemize}
\item[(a)] $f$ is  nondecreasing, 
\item[(b)] $f(s)=1$ for every $s\in [0,1/2]$,
\item[(c)] $f(1)=2$ and all the derivatives of $f$ at $s=1$ vanish.
\end{itemize} 
Denote by $D_m$ the closed unit disc of dimension $m$ and by $\bfS^m \subset \R^{m+1}$ the unit sphere. We also denote by $\exp_N : T_N\bfS^{m} \rightarrow \bfS^m$ the exponential mapping from the north pole $N=(0, \ldots, 0,1)$ associated with the restriction of the Euclidean metric in $\R^{m+1}$ to $\bfS^m$. Then we set
$$
F(v) = \exp_N \left[  \frac{\pi}{2} \, f \left(|v|\right)v\right] \qquad \forall v \in D_m.
$$
By construction, $F$ is smooth on $D_m$, $F$ is a diffeomorphism from $\mbox{Int} (D_m)$ to $\bfS^m \setminus \{S\}$, where $S$ denotes the south pole of $\bfS^{m}$, $F(\partial D_m) = \{S\}$ and all the derivatives of $F$ on $\partial D_m$ vanish. Therefore, in order to prove the lemma, it is sufficient to construct a Lipschitz mapping $G: \Delta \rightarrow D_m$ which is smooth on $\mbox{Int}(\Delta)$, is a diffeomorphism from $\mbox{Int}(\Delta)$ to $\mbox{Int}(D_m)$, and sends $\partial \Delta$ to $\partial D_m$.

The  simplex $\Delta$ is contained in the affine hyperplane 
$$
H =  \left\{ \lambda = \left( t_0, \ldots, t_m\right) \, \vert \, \sum_{i=0}^m t_i=1\right\}.
$$
Let $\bar{t}:=(1/(m+1), \ldots, 1/(m+1))$ be the center of $\Delta$, we check easily that $\Delta$ is contained in the disc centered at $\bar{t}$ with radius $\sqrt{1-1/(m+1)}$. For every $t\in \Delta\setminus \{\bar{t}\}$, we set $u_{t} := (t - \bar{t})/|t -\bar{t}|$ and 
$$
\rho(t) := \min \Bigl\{s\geq 0 \, \vert \, \lambda + s \, u_{t}   \in \partial \Delta \Bigr\} \qquad \forall t \in \Delta \setminus \{\bar{t} \}.
$$
By construction, the function $\rho : \Delta \setminus \{\bar{t} \} \rightarrow [0,+\infty)$ is locally Lipschitz and satisfies for every unit vector $u\in \bfS^m \cap H_0$ (with $H = \bar{t} + H_0$),
$$
\rho \left( \bar{t} + \alpha u \right) =  \alpha_u - \alpha  \qquad \forall \alpha \in (0,\alpha_u],
$$
where $\alpha_u>0$ is the unique $\alpha>0$ such that  $\bar{t} + \alpha u \in \partial \Delta$. We note that since $\Delta \subset \bar{B} (\bar{t},\sqrt{1-1/(m+1)})$, we have indeed $\alpha_u \in (0,\sqrt{1-1/(m+1)}]$ for every $u\in  \bfS^m \cap H_0$.  We also observe that the $m$-dimensional ball $H \cap \bar{B}\left(\bar{t},1/(2(m+1))\right)$ is contained in the interior of $\Delta$ and that $\rho\geq 1/(2(m+1))$ on that set. Pick a smooth function $g:[0,+\infty) \rightarrow [0,1]$  satisfying the following properties:
\begin{itemize}
\item[(d)] $g$ is nonincreasing, 
\item[(e)] $g(s)=1-3(m+1)s$ for every $s\in [0,1/(4(m+1))]$,
\item[(f)] $g(s)=0$ for every $s\geq 1/(2(m+1))$.
\end{itemize} 
Let $D$ be the $m$-dimensional unit disc in $H$ centered at $\bar{t}$,  define the function $G^0:\Delta \rightarrow D$ by
$$
G^0(t) = \bar{t} + \left[ 1-  g\left( \rho(t)\right)\right] \, \left( t - \bar{t}\right) +   g\left( \rho(t)\right) \, u_{t}.  
$$
By construction, $G^0$ is Lipschitz and smooth on each ray starting from $\bar{t}$.  Namely, for each unit vector $u\in \bfS^m \cap H_0$, we have 
$$
G_u^0 (\alpha) :=G^0\left( \bar{t} + \alpha u \right) = \bar{t} + \left[ 1-  g\left(    \alpha_u -\alpha    \right)\right] \, \left( \alpha \, u \right) +   g\left( \alpha_u -\alpha\right) \, u \qquad \forall \alpha \in (0,\alpha_u].
$$
The derivative of $G^0$ on each ray $\bar{t} + \R^+ \, u$ is given by
$$
\frac{\partial G^0_u}{\partial \alpha} (\alpha) = \left[  1-  g\left(     \alpha - \alpha_u     \right) +\left(  \alpha -1\right)  g' \left(     \alpha - \alpha_u     \right) \right] \, u \qquad \forall \alpha \in (0,\alpha_u],
$$
and there holds
\begin{eqnarray*}
 & \quad & 1-  g\left(     \alpha - \alpha_u     \right) +\left(  \alpha -1\right)  g' \left(     \alpha - \alpha_u     \right) \\
 & \geq & 1-  g\left(     \alpha - \alpha_u     \right) +\left( \sqrt{1-\frac{1}{m+1}} -1\right)  g' \left(     \alpha - \alpha_u     \right) \\
 & \geq & 1-  g\left(     \alpha - \alpha_u     \right) +\left( \frac{-1}{2(m+1)}\right)  g' \left(     \alpha - \alpha_u     \right) \geq \frac{3}{4},
\end{eqnarray*}
by (d)-(f). Moreover, for every $u\in \bfS^m \cap H_0$, the ray $\bar{t} + \R^+ \, u$ in invariant by $G^0$, $G_u^0(\alpha_u) = \bar{t}+u$, and in addition $G^0(t)=t$ whenever $t \in H\cap \bar{B}\left(\bar{\lambda},1/(2(m+1))\right)$. In conclusion, $G^0$ is Lipschitz and bijective from $\Delta$ to $D$. If we work in polar coordinates $z=(\alpha,u)$ with $\alpha>0$ and $u\in \bfS^{m} \cap H_0$, then $G^0$ reads 
$$
\tilde{G}^0(z) = \tilde{G}^0(\alpha,u) = \left( G_{u} (\alpha), u \right),
$$
for every $z \in \tilde{\Delta}$, the domain of $G^0$ in polar coordinates (since $G^0$ coincides with the identity near $\bar{\lambda}$ we do not care about the singularity at $\alpha=0$). Thus for every $z$  in the interior of $\tilde{\Delta}$ where $\tilde{G}^0$ is invertible, the Jacobian matrix of $\tilde{G}^0$ at $z$, $J_z\tilde{G}^0$ is triangular and invertible. Recall that for every $z$ in the interior of  $\tilde{\Delta}$, the generalized Jacobian of $\tilde{G}^0$ at $z$ is defined as
$$
\mathcal{J}_z \tilde{G}^0 := \mbox{conv} \Bigl\{  \lim_{k} J_{z_k} \tilde{G}^0 \, \vert \, z_k \rightarrow_{k} z, \, G \mbox{ diff. at } z_k            \Bigr\}.
$$
By the above discussion and Rademacher's Theorem, for every $z$ in the interior of  $\tilde{\Delta}$,  $\mathcal{J}_z \tilde{G}^0$ is always a nonempty compact subset of $M_m(\R)$ which contains only invertible matrices. In conclusion, for every $t \in \mbox{Int} (\Delta)$ the generalized Jacobian of $G^0$ at $t$ satisfies the same properties, it is a nonempty compact subset of $M_m(\R)$ which contains only invertible matrices. Thanks to the Clarke Lipschitz Inverse Function Theorem \cite{clarke83}, we infer that the Lipschitz mapping $G^0:\Delta \rightarrow D$ is  locally bi-Lipschitz from $\mbox{Int} (\Delta)$ to $\mbox{Int} (D)$. It remains to smooth $G^0$ in the interior of $\Delta$ by fixing $G^0$ on the boundary $\partial \Delta$. 

To this aim, consider a mollifier $\theta : \R^m \rightarrow \R$, that is a smooth function satisfying the following three conditions: 
\begin{itemize}
\item[(g)] $\theta \geq 0$, 
\item[(h)] $\mbox{Supp} (\theta) \subset \bar{D}_m$,
\item[(i)] $\int_{\R^m} \theta (x) \, dx =1$.
\end{itemize} 
The multivalued mapping $\lambda \in \mbox{Int} (\Delta) \mapsto \mathcal{J}_{\lambda}G^0 \in M_m(\R)$ is uppersemicontinuous (its graph is closed in $\mbox{Int}(\Delta) \times M_m(\R)$) and is valued in the set of compact convex sets of invertible matrices. Hence, there is a continuous function $\epsilon :\mbox{Int} (\Delta) \rightarrow (0,\infty)$ such that for every $t \in \mbox{Int} (\Delta)$ and every matrix $A \in M_m(\R)$, the following holds 
\begin{eqnarray}\label{dim1}
d\left( A, \mbox{conv} \left( \left\{ \mathcal{J}_{\beta}G \, \vert \, \beta \in B(\lambda,\epsilon (t) \right\} \right)  \right) < \epsilon (t) \quad \Longrightarrow \quad A \mbox{ is invertible}.
\end{eqnarray}
Consider also a smooth function $\nu :H \rightarrow \R^+$ such that:
\begin{itemize}
\item[(j)] $\nu(t)=0,$ for every $t \notin \mbox{Int} (\Delta)$, 
\item[(k)] $0< \nu(t) < \min \left\{ d(t, \partial \Delta), \epsilon(t) \right\}$, for every $t \in \mbox{Int} (\Delta)$,
\item[(l)] for every $t \in \mbox{Int} (\Delta)$, $\left| \nabla_t \nu\right| \leq \epsilon(t)/K$, where $K>0$ is a Lipschitz constant for $G^0$.
\end{itemize} 
Then, we define the function $G:\Delta \rightarrow D$ by (we identify $H_0$ with $\R^m$)
$$
G(t) = \int_{\R^m} \theta (x ) \, G^0 \left( t-\nu(t)x\right) \, dx \qquad \forall t \in \Delta.
$$
By construction, $G$ is Lipschitz on $\Delta$,  it coincides with $G^0$ on $ \partial \Delta$, it satisfies $G(\mbox{Int} (\Delta)) \subset \mbox{Int}(D)$, and it is smooth on $\mbox{Int} (\Delta)$. For every $t \in \mbox{Int} (\Delta)$, its Jacobian matrix at $t$ is given by
$$
J_{t} G = \int_{\R^m} \theta (x) \, J_{t-\nu(t)x} G^0 \cdot \left( I_n -x \cdot \bigl(\nabla_{t} \nu\bigr)^* \right) \, dx.
$$
Hence, we have for every $t \in \mbox{Int} (\Delta)$,
$$
\left\| J_{t} G - \int_{\R^m} \theta (x) \, J_{t-\nu(t)x} G  \, dx\right\| \leq K \, \left|  \nabla_t \nu \right|
$$
and
$$
 \int_{\R^m} \theta (x) \, J_{t-\nu(t)x} G  \, dx \in \mbox{conv} \Bigl\{ J_{\beta} G \, \vert \, \beta \in B(t,\nu(t)) \Bigr\}.
$$
Using (\ref{dim1}) and (j)-(l), we infer that $G$ is a local diffeomorphism at every point of $\mbox{Int}(\Delta)$. Moreover, $G$ is surjective.  If not, there is $y\in D$ such that $y$ does not belong to the image of $G$. Since $G=G^0$ on $\partial \Delta$, $y$ does not belong to $\partial D$. Thus there is $y' \in \partial G(\Delta) \setminus \partial D$. Since $G$ is a local diffeomorphism at any preimage of $y'$, we get a contradiction. In conclusion, $G$ is a Lipschitz mapping from $\Delta$ to $D$ which sends bijectively $\partial \Delta$ to $\partial D$, which sends $\mbox{Int} (\Delta)$ to $\mbox{Int}(D)$, which is  surjective, and which is a smooth local diffeomorphism at every point of $\mbox{Int} (\Delta)$. Moreover, $D$ is simply connected. Hence $G:\Delta \rightarrow D$ is a Lipschitz mapping which is a smooth diffeomorphism from $\mbox{Int} (\Delta)$ to $\mbox{Int}(D)$. We conclude easily.
\end{proof}
\begin{proposition}[Smooth costs on arbitrary manifolds leading to unique optimal transport]
Fix smooth closed manifolds $M, N$. 
Then there exists a cost 
$c\in C^\infty(M \times N)$ such that: for any pair of Borel probability measures $\mu$ on $M$
and $\nu$ on $N$ which charge no $c-c$ hypersurfaces in their respective domains,
the minimizer of \eqref{Kantorovitch inf} is unique.
\end{proposition}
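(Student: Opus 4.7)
The plan is to reduce the proposition to Lemma \ref{lemmanested} by pulling back the Euclidean quadratic cost through carefully chosen smooth maps $E^M : M \to \R^{m+1}$ and $E^N : N \to \R^{m+1}$, and then invoking Theorem \ref{THM1} together with Remark \ref{R singular measures}. Without loss of generality assume $m := \dim M \geq \dim N =: n$, swapping the roles of the two manifolds otherwise. Fix a smooth triangulation of $M$ with top-dimensional simplices $\Delta_1^M, \ldots, \Delta_L^M$, and denote by $\Sigma$ its $(m-1)$-skeleton. Choose radii $0 < r_1 < \cdots < r_L$ and let $\Omega_i := B_{r_i}(r_i e_{m+1}) \subset \R^{m+1}$, so that $\Omega_1 \subset \cdots \subset \Omega_L$ are strictly convex bodies with smooth boundaries sharing the origin as their unique common point. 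Composing the map of Lemma \ref{L:simplex onto sphere} with the dilation $y \mapsto r_i y$ produces a smooth $E_i : \Delta_i^M \to \partial \Omega_i$ which diffeomorphs $\mathrm{int}(\Delta_i^M)$ onto $\partial \Omega_i \setminus \{0\}$ and sends $\partial \Delta_i^M$ to the origin together with all of its derivatives. Because of this flatness at $\partial \Delta_i^M$, the pieces $E_i$ glue into a single $C^\infty$ map $E^M : M \to \R^{m+1}$ whose image is $M_L := \partial \Omega_1 \cup \cdots \cup \partial \Omega_L$, which collapses $\Sigma$ to the origin, and which is an injective local diffeomorphism on $M \setminus \Sigma$. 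For $N$, pick any smooth embedding $E^N : N \hookrightarrow \R^{m+1}$; such an embedding exists because $n \leq m < m+1$.

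Define
\[
c(x,y) := \frac{1}{2} \bigl| E^M(x) - E^N(y) \bigr|^2 \in C^\infty(M \times N),
\]
and set $E_0 := \Sigma \times N$. Since $\Sigma$ is a finite union of smoothly embedded $(m-1)$-dimensional submanifolds of $M$, it is covered by finitely many $c$-$c$ hypersurfaces; the hypothesis on $\mu$ therefore gives $\mu(\Sigma) = 0$, whence $\gamma(E_0) = 0$ for every $\gamma \in \Pi(\mu,\nu)$. On the complement $\tilde {\mathcal S} := (M \times N) \setminus E_0$ the differential $DE^M$ is injective, $E^M$ itself is injective (distinct simplex interiors land on distinct spheres), and $E^N$ is an embedding. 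Because $c$ is the pullback of the Euclidean quadratic cost by $E^M \times E^N$, both $c$-cyclical monotonicity and the alternating relations \eqref{altc1}--\eqref{altc2} transport verbatim between chains in $\tilde {\mathcal S}$ and their images in $M_L \times E^N(N) \subset \R^{m+1} \times \R^{m+1}$.

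Lemma \ref{lemmanested}, applied to $M_L$ and the $C^1$ submanifold $E^N(N)$, then forbids chains of length $4L+1$ in $M_L \times E^N(N)$, hence in $\tilde {\mathcal S}$. In particular the set $E_\infty$ of points belonging to arbitrarily long chains in $\tilde {\mathcal S}$ is empty. Theorem \ref{THM1}, extended to the present marginal regularity through Remark \ref{R singular measures} (valid because $c \in C^\infty \subset C^{1,1}$), then delivers uniqueness of the Kantorovich optimizer. The main technical obstacle is the construction of $E^M$ in the first paragraph: producing a globally $C^\infty$ map whose image is exactly the union of nested tangent spheres and which is injective off a $\mu$-negligible codimension-one set. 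This hinges essentially on the flatness conclusion of Lemma \ref{L:simplex onto sphere}, without which the simplicial pieces $E_i$ could not be glued smoothly across the skeleton.
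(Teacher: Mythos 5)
Your construction of $E^M$ and the overall strategy (collapse the $(m-1)$-skeleton onto the common tangency point of a nested family of spheres, pull back the quadratic cost, bound chain lengths via Lemma \ref{lemmanested}, conclude by Theorem \ref{THM1} and Remark \ref{R singular measures}) is exactly the paper's argument on the $M$ side. The gap is in your treatment of $N$: you assert that a smooth embedding $E^N : N \hookrightarrow \R^{m+1}$ exists ``because $n \leq m < m+1$.'' This is false in general. Whitney's theorem only guarantees a smooth embedding of a closed $n$-manifold into $\R^{2n}$, and closed $n$-manifolds need not embed in codimension one: already for $M = N = \R P^2$ (so $m = n = 2$) there is no embedding of $N$ into $\R^3$, and since the proposition is precisely about manifolds of arbitrary topology, this case cannot be dismissed. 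Your argument therefore only covers those $N$ which happen to embed in $\R^{m+1}$.

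The repair is to treat $N$ symmetrically with $M$, as the paper does: triangulate $N$, apply Lemma \ref{L:simplex onto sphere} (dilated) to each $n$-simplex to get a smooth map $E_N : N \to \tilde N \subset \R^{n+1} \subset \R^{m+1}$ onto a bouquet of nested spheres, collapsing the $(n-1)$-skeleton to a point $N_0 := E_N^{-1}(\0_{n+1})$, and enlarge your excluded set to $E_0 = (\Sigma \times N) \cup (M \times N_0)$, which is still $\gamma$-negligible since $\nu(N_0)=0$ by the hypothesis on $c$-$c$ hypersurfaces. Off this set, $E_N$ restricts to an injective immersion onto the $C^1$ (incomplete) submanifold $\tilde N \setminus \{\0\}$, which is all that Lemma \ref{lemmahyperplane} and hence Lemma \ref{lemmanested} require of the second factor, so the chain-length bound and the rest of your argument go through unchanged.
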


\begin{figure}[!h]
\label{FIGcostonmanifold}
\centering
\begin{pdfpic}
\centering 
\psset{unit=0.75cm}
\begin{pspicture}(12,11)
\pscircle(6,4){2}
\pscircle(6,3){1}
\pscircle(6,5){3}
\pscircle(6,6){4}
\psdots[dotsize=3pt](6,2)
\rput[c](6.65,1.65){$O$}
\psline[linewidth=0.25pt,linecolor=lightgray](0,1)(12,1)(12,11)(0,11)(0,1) 
\end{pspicture}
\end{pdfpic}
\caption{A bouquet of nested convex sets}
\end{figure}
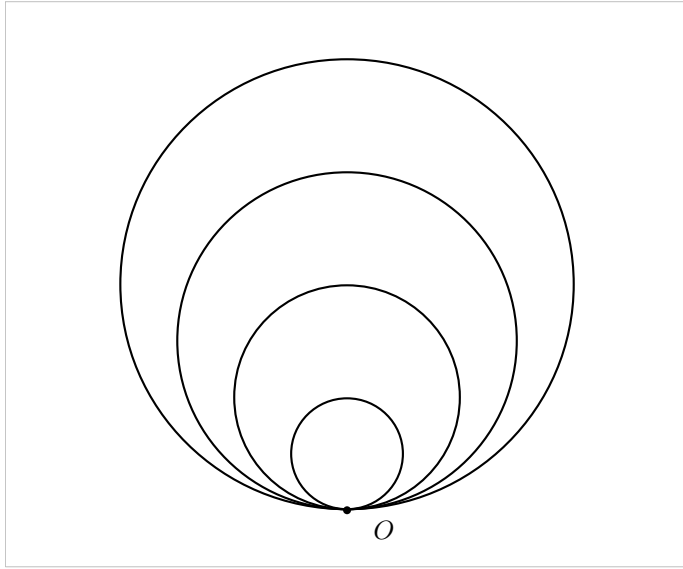


\begin{proof}
Let $m$ and $n$ denote the dimensions of $M$ and $N$,  and assume $m\ge n$ without loss
of generality.  Due to their smoothness,  it is a classical result that both manifolds admit 
smooth triangulations  
\cite{Whitney57}
into finitely many (say $k_M$ and $k_N$) simplices
(by compactness).

For each $k \in \{1,2,\ldots,k_M\}$,  dilating the map $E$ of Lemma \ref{L:simplex onto sphere}
by a factor of $k$  induces a smooth map from the $k$-th simplex of $M$ to the sphere 
$k \partial B_1(e_1)$ of radius $k$ centered at $(k,0,\ldots,0) \in \R^{m+1}$.  Taken together,
these $k_M$ maps define a single smooth map 
$E_M: M \longrightarrow \tilde M$ where 
$\tilde M = \bigcup_{k=1}^{k_M} \partial \Omega_k$
and $\Omega_k = k B_1(e_1) \subset \R^{m+1}.$
This map acts as a diffeomorphism from the union of simplex interiors in $M$ 
to $\tilde M \setminus \{\0_{m+1}\}$ while collapsing their boundaries onto the origin  $\0_{m+1}$
in $\R^{m+1}$.
Set $M_0 := E_M^{-1}(\0_{m+1})$.

Define the analogous map $E_N: N \longrightarrow \tilde N \subset \R^{n+1}$ where
$\tilde N = \bigcup_{k=1}^{k_N} k \partial B_1(e_1) \subset \R^{n+1}$ and $N_0 = E_N^{-1}(\0_{n+1})$.
In case $n<m$,  we embed $\R^{n+1}$ into $\R^{m+1}$
by identifying $\R^{n+1}$ with $\{(x_1,\ldots,x_{m+1}) \in \R^{m+1} \mid 
x_{n+2}=\cdots=x_{m+1}=0\}$.  

The cost 
$$
c(x,y) := |E_M(x) - E_N(y)|^2/2
$$
on $M \times N$ then satisfies the conclusions of the proposition.  Its smoothness follows 
from that of $E_M$ and $E_N$.  Lemma \ref{lemmanested} shows that no 
chains of length greater than $4k_M$ lie in $(M\setminus M_0) \times 
(N\setminus N_0)$.  On the other hand,  the simplex boundaries $M_0$ lies in a finite union of smooth hypersurfaces, hence are $\mu$-negligible.  Similarly,  $N_0$ is $\nu$-negligible. The desired conclusion 
now follows from Theorem~\ref{THM1}.
\end{proof}

\section{Preliminaries on numbered limb systems}\label{SEClimb}

\subsection{Classical numbered limb systems}\label{cnls}

The concept of numbered limb system was introduced by Hestir and Williams in \cite{hw95}.
Like Benes and Stepan \cite{BenesStepan87},  their aim was to find necessary and sufiicient conditions
on the support of a joint measure to guarantee its extremality in the space of measures which share its
marginals.

\begin{definition}[Numbered limb system]
\label{DEFlimb}
Let $X$ and $Y$ be subsets of complete separable metric spaces. A relation $S \subset X\times Y$ is a numbered limb system if there are countable disjoint decompositions of $X$and $Y,$ 
$$
X=\bigcup_{i=0}^{\infty} I_{2i+1} \quad \mbox{and} \quad Y=\bigcup_{i=0}^{\infty} I_{2i},
$$
with a sequence of mappings 
$$
f_{2i} : \mbox{Dom}(f_{2i}) \subset Y \longrightarrow X \quad \mbox{and}  \quad f_{2i+1} : \mbox{Dom}(f_{2i+1}) \subset X \longrightarrow Y
$$
such that 
\begin{eqnarray}\label{DEFlimb1}
S= \bigcup_{i=1}^{\infty} \Bigl( \mbox{Graph} (f_{2i-1}) \cup \mbox{Antigraph}(f_{2i}) \Bigr)
\end{eqnarray}
and
\begin{eqnarray}\label{DEFlimb2}
\mbox{Dom} (f_k) \cup \mbox{Ran} (f_{k+1}) \subset I_k \qquad \forall k\geq 0.
\end{eqnarray}
\end{definition}

\begin{figure}[!h]
\label{FIGlimb}
\centering
\begin{pdfpic}
\centering
\psset{unit=1cm}
\begin{pspicture}(-2,-1)(7,8)
\psgrid[subgriddiv=5,subgriddots=10,gridlabels=0](0,0)(5,6)
\rput(0.5,-0.2){\small $I_1$}
\rput(1.5,-0.2){\small $I_3$}
\rput(2.5,-0.2){\small $I_5$}
\rput(3.5,-0.2){\small $I_7$}
\rput(4.5,-0.2){\small $I_9$}
\rput(-0.3,.5){\small $I_{0}$}
\rput(-0.3,1.5){\small $I_{2}$}
\rput(-0.3,2.5){\small $I_{4}$}
\rput(-0.3,3.5){\small $I_{6}$}
\rput(-0.3,4.5){\small $I_{8}$}
\rput(-0.3,5.5){\small $I_{10}$}
\pscurve(4.9,5.9)(4.1,5.5)(4.9,5.1)
\pscurve(4.1,4.9)(4.35,4.5)(4.9,4.9)
\pscurve(3.9,4.9)(3.1,4.6)(3.9,4.3)
\pscurve(2.9,3.9)(2.3,3.75)(2.9,3.5)(2.1,3.3)
\pscurve(1.9,2.9)(1.2,2.6)(1.9,2.4)
\pscurve(0.1,1.9)(0.9,1.5)(0.1,1.1)
\pscurve(3.1,3.9)(3.4,3.6)(3.6,3.6)(3.9,3.9)
\pscurve(2.1,2.9)(2.5,2.2)(2.9,2.9)
\pscurve(1.1,1.3)(1.5,1.4)(1.9,1.9)
\pscurve(0.1,0.9)(0.5,0.4)(0.9,0.9)
\psline[linewidth=0.25pt,linecolor=lightgray](-2,-1)(6,-1)(6,8)(-2,8)(-2,-1)
\end{pspicture}
\end{pdfpic}
\caption{A numbered limb system with $N=10$}
\end{figure}
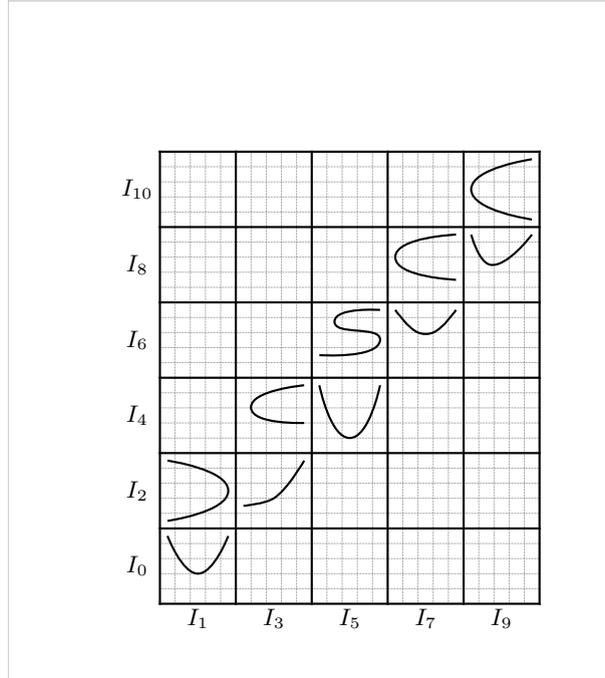

The following statement from \cite{akm11}
extends and relaxes a result of Hestir and Williams \cite{hw95}.  
Here $\pi^X(x,y)=x$ and $\pi^Y(x,y) =y$.

\begin{theorem}[Measures on measurable numbered limb systems are simplicial]
\label{THMlimb}
Let $X$ and $Y$ be Borel subsets of complete separable metric spaces, equipped with $\sigma$-finite Borel measures $\mu$ on $X$ and $\nu$ on $Y$. Suppose there is a numbered limb system 
\begin{equation}\label{NLS}
S= \bigcup_{i=1}^{\infty} \Bigl( \mbox{Graph} (f_{2i-1}) \cup \mbox{Antigraph}(f_{2i})\Bigr) 
\end{equation}
with the property that $\mbox{Graph} (f_{2i-1})$ and $ \mbox{Antigraph}(f_{2i}) $ are $\gamma$-measurable subsets of $X\times Y$ for each $i\geq 1$ and for every $\gamma\in \Gamma (\mu,\nu)$ vanishing outside of $S$. If the system has finitely many limbs or $\mu[X]<\infty$, then at most one $\gamma \in \Gamma (\mu,\nu)$ vanishes outside of $S$. If such a measure exists, it is given by $\gamma =\sum_{k=1}^{\infty} \gamma_k$ where for every $i\geq 1$,
\begin{eqnarray}\label{descent}
\left\{
\begin{array}{l}
\gamma_{2i-1}  =  \bigl( \mbox{id}_X \times f_{2i-1}\bigr)_{\sharp} \eta_{2i-1},  \qquad \gamma_{2i}  =   \bigl(f_{2i} \times \mbox{id}_Y \bigr)_{\sharp} \eta_{2i}, \\
\eta_{2i-1}  =  \left( \mu- \pi^X_{\sharp} \gamma_{2i} \right)|_{\Dom f_{2i-1}}, \quad \eta_{2i}  = \left( \nu - \pi^Y_{\sharp} \gamma_{2i+1} \right)|_{\Dom f_{2i}}.
\end{array}
\right.
\end{eqnarray}
Here $\eta_k$ is a Borel measure on $I_k$ and $f_k$ is measurable with respect to the $\eta_k$ completion of the Borel $\sigma$-algebra. If the system has $N<\infty$ limbs, $\gamma_k=0$ for $k>N$, and $\eta_k$ and $\gamma_k$ can be computed recursively from the formula above starting from $k=N$.
\end{theorem}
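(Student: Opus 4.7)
The plan is to exploit the descent encoded in \eqref{descent}: first show any $\gamma\in\Gamma(\mu,\nu)$ concentrated on $S$ must satisfy the recursion, and then handle convergence in the two scenarios covered by the hypothesis.

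To begin, I would decompose. Using that each $\graph(f_{2i-1})$ and $\antigraph(f_{2i})$ is $\gamma$-measurable, set
$$\gamma_{2i-1}:=\gamma|_{\graph(f_{2i-1})},\qquad \gamma_{2i}:=\gamma|_{\antigraph(f_{2i})},$$
so that $\gamma=\sum_k\gamma_k$ by \eqref{NLS}. Since $\gamma_{2i-1}$ lives on a graph, it disintegrates uniquely as $(\mbox{id}_X\times f_{2i-1})_\sharp\eta_{2i-1}$ for a measure $\eta_{2i-1}$ on $X$ supported in $\dom f_{2i-1}\subset I_{2i-1}$; this automatically forces $f_{2i-1}$ to be measurable with respect to the $\eta_{2i-1}$-completion of the Borel $\sigma$-algebra. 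The even-indexed limbs are treated symmetrically, producing $\eta_{2i}$ on $Y$.

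Next I would extract the descent formulas from the marginal conditions. The disjoint decomposition $X=\bigsqcup_i I_{2i+1}$ combined with \eqref{DEFlimb2} shows that among all the pieces $\gamma_k$, only $\gamma_{2i-1}$ (through $\dom f_{2i-1}$) and $\gamma_{2i}$ (through $\ran f_{2i}$) have $X$-marginal charging $I_{2i-1}$. Restricting $\pi^X_\sharp\gamma=\mu$ to $I_{2i-1}$ therefore yields
$$\mu|_{I_{2i-1}}=\eta_{2i-1}+\pi^X_\sharp\gamma_{2i},$$
which, since $\eta_{2i-1}$ is concentrated on $\dom f_{2i-1}$, rearranges to the first identity in \eqref{descent}; the $Y$-marginal yields the second analogously.

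It remains to solve this recursion uniquely. When the system has $N<\infty$ limbs there is no $f_{N+1}$, hence $\gamma_{N+1}=0$, and descent from $k=N$ determines $\eta_N,\gamma_N,\eta_{N-1},\ldots$ in turn. The main obstacle is the infinite case under $\mu[X]<\infty$, where no terminal tip initiates the descent. I would handle it by comparing two candidate solutions $\gamma,\gamma'$: iterating the recursion expresses the restriction of the signed measure $\gamma-\gamma'$ to limbs of index $\le k$ in terms of $\gamma_{k+1}-\gamma_{k+1}'$, whose total mass is bounded by $\sum_{j>k}(\gamma_j+\gamma_j')(X\times Y)$. Since $\gamma[X\times Y]=\gamma'[X\times Y]=\mu[X]<\infty$, this tail vanishes as $k\to\infty$, forcing $\gamma=\gamma'$; the same telescoping shows that whenever a solution exists, the partial sums defined by \eqref{descent} converge to it in total variation.
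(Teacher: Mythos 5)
This theorem is quoted by the paper from Ahmad--Kim--McCann \cite{akm11} without proof, so there is no in-paper argument to compare against; your reconstruction follows the standard route of that reference (and of Hestir--Williams): decompose $\gamma$ limb by limb using the assumed $\gamma$-measurability, derive the descent relations \eqref{descent} from the marginal constraints and the disjointness of the $I_k$, and close the recursion either at the top limb ($N<\infty$) or via the vanishing tail mass when $\mu[X]<\infty$. The key estimate $\|\gamma_k-\gamma_k'\|_{TV}\le\|\gamma_{k+1}-\gamma_{k+1}'\|_{TV}$ (since the $\mu$ or $\nu$ terms cancel in the difference of the $\eta$'s, and restriction and pushforward do not increase total variation) combined with summability of $\sum_j\gamma_j(X\times Y)$ is exactly right, so the proposal is correct, modulo the routine measurability bookkeeping (e.g.\ that a measure concentrated on a $\gamma$-measurable graph pushes forward from its $X$-marginal, and that the $I_k$ need only be handled through measurable hulls) which the theorem's hypotheses are designed to accommodate.
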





The statement of Theorem \ref{THMlimb} from Ahmad, Kim and McCann, 
like its antecedent in \cite{hw95}, give a sufficient condition for extrememality.   It is separated from Benes and Stepan \cite{BenesStepan87} and Hestir and Williams' \cite{hw95}  necessary conditions for extremality by the $\gamma$-measurability assumed
for the graphs and antigraphs (which is satisfied, for example,  whenever the graphs and antigraphs are Borel.)
For sets $S$ of the form \eqref{NLS} whose graphs and antigraphs fail to be measurable,  there may exist non-extremal
measures vanishing outside of $S$, as shown by Hestir and Williams using the axiom of choice \cite{hw95}.
Such issues are further explored by Bianchini and Caravenna \cite{bc09} and Moameni \cite{Moameni14p},  who arrive at their own criteria for extremality.
Moameni's is closest in spirit to the approach developed below based on chain length: he gets his measurability by assuming the existence of a measurable Lyapunov function to distinguish different levels of the dynamics.

\section{Proof of Theorem \ref{THM1} and Remark \ref{R singular measures}}\label{SECproofsTHM1}


Since the source and target spaces are closed manifolds and the cost $c \in C^1$, 
Gangbo and McCann~\cite{gm96} provide
a $c$-cyclically monotone compact set $\mathcal{S} \subset M\times N$ and Lipschitz potentials $\psi:M\rightarrow \R$ and $\phi:M \rightarrow \R$ which satisfy 
\begin{eqnarray}\label{pot1}
\psi(x) = \max \Bigl\{ \phi(y)-c(x,y) \, \vert \, y \in N\Bigr\} \qquad \forall x \in M,
\end{eqnarray}
\begin{eqnarray}\label{pot2}
\phi(y) = \min \Bigl\{ \psi(x)+c(x,y) \, \vert \, x \in M\Bigr\} \qquad \forall y \in N,
\end{eqnarray}
\begin{eqnarray}\label{potincl}
\mathcal{S} \subset \partial_c\psi  :=  \Bigl\{ (x,y) \in M\times N \, \vert \, c(x,y)= \phi(y)-\psi(x)\Bigr\},
\end{eqnarray}
such that any plan $\gamma \in \Pi(\mu,\nu)$ is optimal if and only if $\mbox{Supp}(\gamma)\subset \mathcal{S}$.   Indeed,  we henceforth $\mathcal S$ to be the smallest compact set with these properties.

We recall that the $c$-subdifferential of $\psi$ at $x\in M$ and the $c$-superdifferential of $\phi$ at $y\in N$ are defined using \eqref{potincl}:
\begin{eqnarray*}
\partial_c\psi (x) &:=& \Bigl\{ y \in N \, \vert \, (x,y) \in \partial_c \psi \Bigr\} \\
\mbox{and} \quad \partial^c\phi (y) &:=& \Bigl\{ x \in M \, \vert \, (x,y) \in \partial_c\psi \Bigr\}.
\end{eqnarray*}
Note that since both $\psi$ and $\phi$ are Lipschitz and $\mu$ and $\nu$ are both absolutely continuous with respect to Lebesgue, thanks to Rademacher's theorem, $\psi$ and $\phi$ are differentiable almost everywhere with respect to $\mu$ and $\nu$ respectively. Let $\Dom d\psi$ denotes the subset of $M$ on which $\psi$ is differentiable.
Following Clarke \cite{Clarke75}, for every $x \in M$ (resp. $y\in N$), we denote by $D^* \psi (x)$ and $\partial \psi(x)$ (resp.  $D^* \phi (y)$ and $\partial \phi(y)$) the limiting and  generalized differentials of $\psi$ at $x$ (resp. $\phi$ at $y$) which are defined by (we proceed in the same way with $\phi$)
$$
D^*\psi(x) = \Bigl\{ \lim_{k\rightarrow \infty} p_k \, \vert \, p_k = d\psi(x_k), x_k \rightarrow x, x_k \in \Dom d\psi\Bigr\} \subset T_x^*M,
$$ 
and 
$$
\partial \psi (x) = \mbox{conv} \left( D^*\psi(x)\right) \subset T_x^*M.
$$
By Lipschitzness, for every $x\in M$, the sets $D^*\psi(x)$ and $\partial \psi$ are nonempty and compact, and of course $\partial \psi(x)$ is convex.   The next three propositions are relatively standard;
the lemmas which follow them are new.

\begin{proposition}\label{P Clarke}
For $c \in C^1$, the potentials $\psi$ and $\phi$ of \eqref{pot1}-\eqref{pot2} satisfy:
\begin{itemize}
\item[(i)] The mappings $x\in M \mapsto \partial \psi(x)$ and $y\in N \mapsto \partial \phi(y)$ have closed graph.
\item[(ii)] For every $x\in M$, $\psi$ is differentiable at $x$ if and only if $\partial \psi(x)$ is a singleton.
\item[(iii)] For every $y\in N$, $\phi$ is differentiable at $y$ if and only if $\partial \phi(y)$ is a singleton.
\item[(iv)] The singular sets $M_0:=M \setminus \dom d\psi$ and $N_0 := \setminus \dom d\phi$ are $\sigma$-compact.
\end{itemize}
\end{proposition}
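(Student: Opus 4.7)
The plan is to exploit the $c$-conjugacy representation of $\psi$ and $\phi$ together with standard facts about Clarke's generalized gradient; since the arguments for $\psi$ and $\phi$ are symmetric, I focus on $\psi$.

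For (i), I would first observe that $D^*\psi$ has closed graph essentially by its definition: if $x_k \to x$ and $p_k \in D^*\psi(x_k)$ with $p_k \to p$, a diagonal extraction from the sequences defining each $p_k$ yields differentiability points converging to $x$ whose gradients converge to $p$, placing $p$ in $D^*\psi(x)$. The Lipschitz bound on $\psi$ (inherited from $c\in C^1$ and compactness of $M\times N$) confines $\partial\psi(x)$ to a fixed ball in $T^*_x M$ for every $x$. Closed graph of $\partial\psi=\mathrm{conv}(D^*\psi)$ then follows by Carath\'eodory: write each element of $\partial\psi(x_k)$ as a convex combination of at most $m+1$ points of $D^*\psi(x_k)$, extract subsequences of the coefficients and the extreme points by compactness, and take limits using the closed graph of $D^*\psi$.

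For (ii), the easy direction is $\Leftarrow$: if $\partial\psi(x_0)=\{p\}$ then $D^*\psi(x_0)=\{p\}$, and the classical theorem of Clarke~\cite{clarke83} equates singleton generalized gradient with strict differentiability at $x_0$, which implies (ordinary) differentiability with $d\psi(x_0)=p$. For $\Rightarrow$, assume $\psi$ is differentiable at $x_0$ and pick $p\in D^*\psi(x_0)$, say $p=\lim_k d\psi(x_k)$ with $x_k\to x_0$ in $\dom d\psi$. By compactness of $N$ one can select $y_k\in\partial_c\psi(x_k)$; the maximum structure \eqref{pot1} forces $\phi(y_k)-c(\cdot,y_k)$ to touch $\psi$ from below at $x_k$, so at the differentiability point $x_k$ one reads off
$$
d\psi(x_k)=-\frac{\partial c}{\partial x}(x_k,y_k).
$$
Passing to a subsequence along which $y_k\to y_0$, the $c$-cyclical monotonicity / closedness of $\partial_c\psi$ yields $y_0\in\partial_c\psi(x_0)$ and, by $C^1$ regularity of $c$, $p=-\partial_x c(x_0,y_0)$. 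Since $\phi(y_0)-c(\cdot,y_0)\le\psi(\cdot)$ with equality at $x_0$, differentiability of $\psi$ at $x_0$ imposes $d\psi(x_0)=-\partial_x c(x_0,y_0)=p$. Hence $D^*\psi(x_0)=\{d\psi(x_0)\}$ and, being its convex hull, $\partial\psi(x_0)=\{d\psi(x_0)\}$. Statement (iii) follows by the symmetric argument applied to $\phi$ using \eqref{pot2}.

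For (iv), items (ii)--(iii) identify $M_0$ with $\{x\in M:\partial\psi(x)\text{ is not a singleton}\}$. I would write $M_0=\bigcup_{n\in \N^*}A_n$ with
$$
A_n:=\Bigl\{x\in M:\mathrm{diam}\bigl(\partial\psi(x)\bigr)\ge 1/n\Bigr\},
$$
and verify each $A_n$ is closed: if $x_k\to x$ in $A_n$, select $p_k,q_k\in\partial\psi(x_k)$ with $|p_k-q_k|\ge 1/n$; uniform boundedness (from (i)) and the closed graph property yield limits $p,q\in\partial\psi(x)$ with $|p-q|\ge 1/n$. Compactness of $M$ then makes each $A_n$ compact, so $M_0$ is $\sigma$-compact, and likewise for $N_0$.

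The main obstacle is the $\Rightarrow$ direction of (ii), where I rely crucially on the $c$-conjugacy to produce a smooth ``supporting function'' $\phi(y_0)-c(\cdot,y_0)$ matching $\psi$ at $x_0$; the $C^1$ regularity of $c$ is used both to identify $d\psi(x_k)$ with $-\partial_x c(x_k,y_k)$ at differentiability points and to pass to the limit $p=-\partial_x c(x_0,y_0)$. All other steps reduce to routine closed-graph and Carath\'eodory bookkeeping once Rademacher's theorem supplies a dense set of differentiability points.
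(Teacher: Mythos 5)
Your proposal is correct and follows essentially the same route as the paper: the key step in both is that at any differentiability point $x$ the first-order condition from the $c$-conjugacy forces $d\psi(x)=-\frac{\partial c}{\partial x}(x,y)$ for every $y\in\partial_c\psi(x)$, so limits of gradients along sequences in $\dom d\psi$ all coincide with $d\psi(x_0)$, and (iv) is obtained identically via upper semicontinuity of $x\mapsto\mathrm{diam}(\partial\psi(x))$. The only cosmetic differences are that you argue (ii) directly rather than by contradiction, spell out the Carath\'eodory argument for (i) which the paper cites as well-known, and invoke Clarke's strict-differentiability criterion for the converse direction where the paper uses a Lebesgue-point argument.
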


\begin{proof}[Proof of Proposition \ref{P Clarke}]
Assertion (i) is well-known \cite{Clarke75}, and follows easily from the definitions of $\partial \psi$ and $\partial \phi$. Let $x\in M$ be such that $\psi$ is differentiable at $x$. From \eqref{pot1}--\eqref{potincl} we have
\begin{eqnarray}\label{Clarke1}
-\frac{\partial c}{\partial x}(x,y) = d\psi(x) \qquad \forall y \in \partial_c \psi(x).
\end{eqnarray}
Argue by contradiction and assume that $\partial \psi(x)$ is not a singleton. This means that $D^*\psi(x)$ is not a singleton too, let $p\neq q $ be two one-forms in $D^*\psi(x)$. Then there are two sequences $\{x_k\}_k, \{x'_k\}_k$ converging to $x$ such that $\psi$ is differentiable at $x_k$ and $x_k'$ and 
$$
\lim_{k\rightarrow \infty} d\psi(x_k)=p, \qquad \lim_{k\rightarrow \infty} d\psi({x_k'})=q.
$$
For each $k$, there are $y_k \in \partial_c \psi(x_k)$, $y_k' \in \partial_c \psi (x_k')$ such that
$$
-\frac{\partial c}{\partial x}\bigl(x_k,y_k\bigr) = d\psi(x_k) \qquad \mbox{and} \qquad -\frac{\partial c}{\partial x}\bigl(x_k,y_k'\bigr) = d\psi({x_k'}).
$$
By compactness of $N$, we may assume that the sequences $\{y_k\}_k, \{y_k'\}_k$ converge respectively to some $\bar{y} \in \partial_c \psi(x)$ and $\bar{y}'\in \partial_c \psi(x)$. Passing to the limit , we get 
$$
-\frac{\partial c}{\partial x}\bigl(x,\bar{y}\bigr) = p \qquad \mbox{and} \qquad -\frac{\partial c}{\partial x}\bigl(x,\bar{y}'\bigr) = q,
$$
which contradicts (\ref{Clarke1}). On the other hand, if $\partial \psi(x)$ is a singleton, then $\psi$ is differentiable at $x$ (indeed, $d\psi: \Dom d\psi \longrightarrow T^*M$ is continuous at $x$,
so $x$ is a Lebesgue point for $d\psi \in L^\infty_{loc}$). 

(iv) The set of $x$ 
such that $\partial \psi(x)$ 
is a singleton is $\sigma$-compact because the multi-valued mapping 
$x\mapsto \partial \psi(x)$ 
had s closed graph, and the mapping 
$x \mapsto \mbox{diam} (\partial \psi(x))$ 
is upper semicontinuous. For every 
whole number $q$, this implies those $x$ with 
$\mbox{diam} (\partial \psi(x)) \ge 1/q$ 
form closed subset of the compact manifold $M$. 
The singular set $M\setminus \Dom d\psi$ is the union of such subsets over $q=1,2,\ldots$.
$\sigma$-compactness of $N\setminus \Dom d\phi = 
\bigcup_{q=1}^\infty \{y \in N \mid {\rm diam} (\partial\phi(y)) \ge 1/q\}$ follows by symmetry.
\end{proof}


\begin{proposition}[Differentiability a.e.]
\label{PROP1}
The sets $M_0 := M \setminus \dom d\psi$, $N_0 := N\setminus \dom d\phi$
and $M_0 \times N_0$ are $\sigma$-compact, and
$\mu(M_0)=\nu(N_0)=\gamma(M_0\times N_0)=0$  for every plan $\gamma \in \Pi(\mu,\nu)$.
\end{proposition}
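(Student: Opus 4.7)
\medskip

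The proof is essentially a direct consequence of what has already been established, together with Rademacher's theorem and the definition of $\Pi(\mu,\nu)$.

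The plan is to dispatch the $\sigma$-compactness claims first, and then the measure-theoretic statements. For $\sigma$-compactness of $M_0$ and $N_0$, I would simply cite Proposition \ref{P Clarke}(iv), which already proves exactly this. For $M_0 \times N_0$, if we write $M_0 = \bigcup_{q\in\N^*} K_q^M$ and $N_0 = \bigcup_{q\in\N^*} K_q^N$ with each $K_q^M, K_q^N$ compact, then
\[
M_0 \times N_0 = \bigcup_{q,q'\in\N^*} K_q^M \times K_{q'}^N
\]
is a countable union of compact sets, hence $\sigma$-compact.

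Next I would handle the measure-zero statements. Since $\psi:M\to\R$ is Lipschitz (by \eqref{pot1}--\eqref{pot2} and the continuity of $c$) and $M$ is a smooth compact manifold, Rademacher's theorem applied in local coordinate charts shows that the set where $\psi$ fails to be differentiable has zero Lebesgue measure. Because $\mu$ is absolutely continuous with respect to Lebesgue, this gives $\mu(M_0)=0$. The identical argument applied to $\phi$ and $\nu$ yields $\nu(N_0)=0$.

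Finally, for any $\gamma\in\Pi(\mu,\nu)$, the marginal condition gives
\[
\gamma(M_0\times N_0)\ \le\ \gamma(M_0\times N)\ =\ \mu(M_0)\ =\ 0,
\]
which completes the proof. There is no real obstacle here; the only mild subtlety is that one should note the $\sigma$-compactness is what allows us to take $E_0\supset M_0\times N_0\cup(\text{similar sets})$ later in Theorem \ref{THM1}, since the theorem's hypothesis requires $E_0$ to be $\sigma$-compact and $\gamma$-null for all optimizers.
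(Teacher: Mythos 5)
Your proof is correct and follows essentially the same route as the paper: $\sigma$-compactness via Proposition \ref{P Clarke}(iv) and the product-of-unions observation, $\mu(M_0)=\nu(N_0)=0$ via Rademacher, and the marginal inequality $\gamma(M_0\times N_0)\le\gamma(M_0\times N)=\mu(M_0)$. The one thing you omit is that this proposition is also invoked to prove Remark \ref{R singular measures}, where $\mu,\nu$ need not be absolutely continuous; for that case the paper notes that when $c\in C^{1,1}$ the potentials $\phi$ and $-\psi$ are semiconvex, so by Zaj\'{\i}\v{c}ek's theorem $M_0$ and $N_0$ lie in countably many $c-c$ hypersurfaces, which the marginals are assumed not to charge. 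If you intend the proposition only under the hypotheses of Theorem \ref{THM1}, your argument is complete as written.
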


\begin{proof}[Proof of Proposition \ref{PROP1}]
Since $\mathcal{S}$ is compact, the $\sigma$-compactness of $E_0$ follows from that 
shown in Proposition \ref{P Clarke}(iv) for $M_0$ and $N_0$ (a product of unions being the union of the products). 
If $\mu$ and $\nu$ are absolutely continuous with respect to Lesbesgue,
Rademacher's theorem asserts $\mu[M_0]=0$ and $\nu[N_0]=0$.
Otherwise $c \in C^{1,1}$, in which case Gangbo and McCann show the potentials $\phi$ and $-\psi$ are 
semiconvex \cite{gm96},  meaning
 their distributional Hessians admit local bounds from below in $L^\infty$.  
In this case the conclusion of Rademacher's theorem can be sharpened: 
Zajicek \cite{Zajicek79} shows $M_0$ and $N_0$
to be contained in countably many $c-c$ hypersurfaces,  on which $\mu$ and $\nu$ are assumed to vanish.  Finally
$\gamma(M_0 \times N_0) \le \gamma(M_0 \times N) = \mu(M_0)=0$.
\end{proof}


Since our manifolds $M$ and $N$ are compact,  any open subset is 
$\sigma$-compact; in particular the complement of $\mathcal S$ is $\sigma$-compact.
In view of this fact and the proposition preceding it, by enlarging $E_0$ if necessary 
we may henceforth assume (i) $(M \times N \setminus {\mathcal S}) \subset E_0$ and (ii)
$M_0 \times N_0 \subset E_0$. Then $\tilde{\mathcal{S}}:= M \times N \setminus E_0$
ensures that 
for all pairs $(x,y) \in \tilde{\mathcal{S}} := M \times N \setminus E_0$  we have differentiability of $\psi$ at $x$ and of $\phi$ at $y$.

\begin{proposition}[Marginal cost is marginal price]
\label{PROP2}
For every $(x,y)\in \tilde{\mathcal{S}}$, \eqref{pot1}--\eqref{potincl} imply
\begin{eqnarray}\label{diff}
d\psi(x) = - \frac{\partial c}{\partial x} (x,y) \quad \mbox{and} \quad d\phi(x) = \frac{\partial c}{\partial y} (x,y).
\end{eqnarray} 
\end{proposition}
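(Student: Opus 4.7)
The plan is to exploit the fact that membership in $\partial_c \psi$ says equality is achieved in the defining inequalities \eqref{pot1}--\eqref{pot2}, so $(x,y) \in \tilde{\mathcal{S}}$ realizes an interior extremum of certain smooth-plus-Lipschitz functions; the standard first-order optimality condition then yields \eqref{diff}.

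More concretely, fix $(x,y) \in \tilde{\mathcal{S}}$. From \eqref{pot1} and \eqref{potincl} we have $\psi(x') + c(x',y) \geq \phi(y)$ for every $x' \in M$, with equality at $x'=x$. Hence the function
$$
F(x') := \psi(x') + c(x',y) - \phi(y)
$$
is nonnegative on $M$ and vanishes at $x$, so $x$ is a global minimizer of $F$. Since $(x,y) \in \tilde{\mathcal{S}}$, $\psi$ is differentiable at $x$, and $c(\cdot,y) \in C^1$, so $F$ is differentiable at $x$ and the first-order condition $dF(x)=0$ gives $d\psi(x) + \frac{\partial c}{\partial x}(x,y) = 0$, which is the first equality in \eqref{diff}.

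Symmetrically, from \eqref{pot2} and \eqref{potincl} the function
$$
G(y') := \psi(x) + c(x,y') - \phi(y')
$$
is nonnegative on $N$ and vanishes at $y$, so $y$ maximizes $\phi - c(x,\cdot)$. Differentiability of $\phi$ at $y$ together with $c(x,\cdot) \in C^1$ yields $\frac{\partial c}{\partial y}(x,y) - d\phi(y) = 0$, i.e. the second equality in \eqref{diff}.

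There is no substantive obstacle here: the only delicate point is justifying that $\psi$ (respectively $\phi$) is differentiable at $x$ (respectively $y$) for every $(x,y) \in \tilde{\mathcal{S}}$, but that is exactly how $\tilde{\mathcal{S}} = M \times N \setminus E_0$ was enlarged just before the statement, using Proposition \ref{P Clarke}(iv) and Proposition \ref{PROP1} to ensure $M_0 \times N_0 \subset E_0$ and $(M\times N) \setminus \mathcal{S} \subset E_0$. Once differentiability of $\psi$ at $x$ and of $\phi$ at $y$ is in hand, the calculation is a one-line application of Fermat's rule.
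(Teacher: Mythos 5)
Your proof is correct and follows essentially the same route as the paper: equality in \eqref{pot1}--\eqref{pot2} at the point $(x,y)\in\partial_c\psi$ plus differentiability of $\psi$ at $x$ and $\phi$ at $y$ (guaranteed by the enlargement of $E_0$) yields \eqref{diff} by first-order optimality. The paper's proof is just a terser version of the same argument.
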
 

\begin{proof}[Proof of Proposition \ref{PROP2}]
Let $(x,y) \in \tilde{\mathcal{S}}$, then we have by (\ref{pot1})--(\ref{pot2}),
\begin{eqnarray*}
 \phi(y') -c(x,y')\leq \psi(x) \quad \forall y' \in N \quad & \mbox{and} & \quad  \phi(y) -c(x,y) = \psi(x) \\
 \psi(x')+c(x',y) \geq \phi(y) \quad \forall x' \in M  \quad & \mbox{and} & \quad  \psi(x) +c(x,y) = \phi(y).
 \end{eqnarray*}
We conclude easily since both $\psi$ and $\phi$ are differentiable respectively at $x$ and $y$.
\end{proof}

We call $L$-chain in $\tilde{\mathcal{S}}$ any ordered family of pairs 
$$
 \Bigl( (x_1,y_1), \ldots (x_L,y_L)\Bigr) \subset \tilde{\mathcal{S}}^L
$$
such that for every $l=1, \ldots, L-1$ there holds, either 
$$
\left\{
\begin{array}{rclcl}
x_{l} & = & x_{l+1}, &&\\
 y_l & \neq & y_{l+1} &=& y_{\min\{L,l+2\}}
 \end{array}
 \right. 
  \quad  \mbox{ or } \quad
 \left\{
\begin{array}{rclcl}
 y_l & = & y_{l+1}, \\
 x_l & \neq & x_{l+1} &=& x_{\min\{L,l+2\}} .
  \end{array}
 \right. 
$$
Note that by construction, the set of pairs of any  $L$-chain in $\tilde{\mathcal{S}}$ is $c$-cyclically monotone as a subset of $\mathcal{S}$,  so by (\ref{diff}), any $L$-chain in $\tilde{\mathcal{S}}$ is indeed an $L$-chain with respect to $c$ (Definition \ref{Laltchain}). 
 We define the level $\ell(x,y)$ of each $(x,y) \in \tilde{\mathcal{S}}$ 
to be the supremum of all natural numbers $L\in\N^*$ such that there is at least one chain $\left( (x_1,y_1), \ldots (x_L,y_L) \right)$ in  $\tilde{\mathcal{S}}$ of length $L$  such that $(x,y) = (x_L,y_L)$. Moreover, given a chain $\left( (x_1,y_1), \ldots (x_L,y_L) \right)$ with $L\geq 2$ in $\tilde{\mathcal{S}}$, we say that $(x_L,y_L)$  is a horizontal end if $y_L=y_{L-1}$ and a vertical end if $x_
L=x_{L-1}$. We set
$$
\tilde {\mathcal{S}}_L := \Bigl\{ (x,y) \in \tilde {\mathcal{S}} \, \vert \, \ell (x,y) \ge L \Bigr\} \qquad \forall L \in \N^*,
$$
and denote  by $\tilde{\mathcal{S}}_L^h$ (resp. $\tilde{\mathcal{S}}_L^v$)  the set of pairs  $(x,y) \in \tilde{\mathcal{S}}_L$ such that there exists a $L$-chain $ \left( (x_1,y_1), \ldots (x_L,y_L) \right)$ in  $\tilde{\mathcal{S}}$  such that $(x,y) = (x_L,y_L)$ and $y_{L-1}=y_L$ (resp. $x_{L-1}=x_L$). Although projections of Borel sets are not necessarily Borel (see \cite{srivastava98}), the following lemma holds.


\begin{lemma}[Borel measurability]\label{LEM3}
The sets $\tilde{\mathcal{S}}_1=\tilde{\mathcal S}$ 
and $\tilde{\mathcal{S}}_2^h, \tilde{\mathcal{S}}_2^v, \ldots, \tilde{\mathcal{S}}_L^h, \tilde{\mathcal{S}}_L^v$ are Borel:  each 
takes the form $\cup_{p=1}^\infty \cap_{q=1}^\infty U_{p,q}$,  where the sets $U_{1,1}$ and
$U_{p-1,q} \subset U_{p,q} \subset U_{p,q-1}$ are open for each $p,q\ge 2$.
\end{lemma}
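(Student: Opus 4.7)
I would prove the claim by induction on $L$, handling the $h$ and $v$ cases symmetrically by swapping the roles of $M$ and $N$. The crucial preliminary is that $\tilde{\mathcal S}$ admits a $G_\delta$ representation $\tilde{\mathcal S}=\bigcap_q Z_q$ with the extra \emph{nested-closure} property $\overline{Z_{q+1}}\subset Z_q$. Indeed, Proposition \ref{P Clarke} gives $\dom d\psi=\bigcap_q W_q^M$ for the open sets $W_q^M:=\{x:\mbox{diam}(\partial\psi(x))<1/q\}$, which satisfy $\overline{W_{q+1}^M}\subset W_q^M$ thanks to the upper-semicontinuity of the diameter; analogously for $\phi$ on $N$. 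Intersecting $(W_q^M\times N)\cup(M\times W_q^N)$ with the open $1/q$-neighborhood of the compact $\mathcal S$ produces the desired $Z_q$. Setting $U_{p,q}^{(1)}:=Z_q$ (constant in $p$) establishes the base case $L=1$ and supplies a \emph{strengthened} inductive hypothesis, namely nested closures $\overline{U_{p,q+1}}\subset U_{p,q}$, which I carry throughout.

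For $L\ge 2$, Definition \ref{Laltchain} shows the second-to-last link of an $L$-chain ending horizontally at $(x,y)$ must be some $(x',y)$ with $x'\ne x$, and this $(x',y)$ is the endpoint of an $(L-1)$-chain terminating vertically. Hence
\[
\tilde{\mathcal S}_L^h = \bigl\{(x,y)\in\tilde{\mathcal S} : \exists\,x'\ne x,\ (x',y)\in\tilde{\mathcal S}_{L-1}^v\bigr\}.
\]
Expanding $\tilde{\mathcal S}_{L-1}^v=\bigcup_a\bigcap_b U_{a,b}^{(L-1,v)}$ via the inductive hypothesis and stratifying the existence quantifier by a discreteness index $s\in\N^*$, I would set
\[
U^{\ast}_{(a,s),b} := \Bigl\{(x,y)\in Z_b : \exists\,x'\in M,\ d(x',x)>\tfrac{1}{s}-\tfrac{1}{b}\ \text{and}\ (x',y)\in U_{a,b}^{(L-1,v)}\Bigr\}.
\]
The shifting threshold $\tfrac{1}{s}-\tfrac{1}{b}$ is the device that propagates nested closures: openness of $U^{\ast}_{(a,s),b}$ follows by a perturbation argument exploiting the strict inequality together with openness of $Z_b$ and $U_{a,b}^{(L-1,v)}$, and combining $\overline{Z_{b+1}}\subset Z_b$, $\overline{U_{a,b+1}^{(L-1,v)}}\subset U_{a,b}^{(L-1,v)}$, and the strict gap $\tfrac{1}{s}-\tfrac{1}{b+1}>\tfrac{1}{s}-\tfrac{1}{b}$ yields $\overline{U^{\ast}_{(a,s),b+1}}\subset U^{\ast}_{(a,s),b}$.

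Verifying $\tilde{\mathcal S}_L^h=\bigcup_{(a,s)}\bigcap_b U^{\ast}_{(a,s),b}$ is the essence of the proof. The forward inclusion is immediate: any $(x,y)\in\tilde{\mathcal S}_L^h$ with witness $x'\in G_a^{(L-1,v)}:=\bigcap_b U_{a,b}^{(L-1,v)}$ satisfies $d(x',x)>1/s$ for some $s$, hence $d(x',x)>1/s-1/b$ for every $b$. For the reverse, given witnesses $x'_b$ for each $b$, compactness of $M$ extracts a subsequential limit $x'_\infty$ with $d(x'_\infty,x)\ge 1/s>0$; the monotonicity $U_{a,b}^{(L-1,v)}\subset U_{a,r}^{(L-1,v)}$ for $b\ge r$ gives $(x'_b,y)\in U_{a,r}^{(L-1,v)}$ eventually, so $(x'_\infty,y)\in\overline{U_{a,r}^{(L-1,v)}}$ for each $r$, and the inductive nested closure $\overline{U_{a,r+1}^{(L-1,v)}}\subset U_{a,r}^{(L-1,v)}$ then forces $(x'_\infty,y)\in\bigcap_r U_{a,r}^{(L-1,v)}=G_a^{(L-1,v)}\subset\tilde{\mathcal S}_{L-1}^v$ rather than merely in the closure. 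Diagonally re-enumerating pairs $(a,s)$ by a single index $p$ produces the claimed representation $\bigcup_p\bigcap_q U_{p,q}$ with the required monotonicity $U_{p-1,q}\subset U_{p,q}\subset U_{p,q-1}$, and the nested-closure property is preserved for the next step of the induction. The main obstacle is precisely the passage to the limit along witnesses in open target sets, which the shifting threshold together with the strengthened nested-closure hypothesis is designed to resolve.
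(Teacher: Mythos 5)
Your reduction of $\tilde{\mathcal S}_L^h$ to a one\--step recursion, namely $\tilde{\mathcal S}_L^h=\{(x,y)\in\tilde{\mathcal S}:\exists\,x'\ne x,\ (x',y)\in\tilde{\mathcal S}_{L-1}^v\}$, is correct in the setting of Section \ref{SECproofsTHM1} (where cyclical monotonicity and the gradient identities are automatic for tuples drawn from $\tilde{\mathcal S}$), and it is a legitimate alternative to the paper's direct parametrization of whole $L$-tuples. However, the technical device on which your entire induction rests --- the ``nested closure'' property $\overline{U_{p,q+1}}\subset U_{p,q}$ --- fails already at the base case, and cannot be repaired. Upper semicontinuity of $x\mapsto\mbox{diam}(\partial\psi(x))$ makes the sublevel sets $W_q^M$ open, but it gives an inequality in the wrong direction for the inclusion $\overline{W_{q+1}^M}\subset W_q^M$: a point where the generalized differential is large can perfectly well be a limit of points of differentiability (take $\psi(t)=|t|$ in a chart: then $W_q^M$ omits the origin while $\overline{W_{q+1}^M}$ contains it). Indeed, if your inclusion held, then $\dom d\psi=\bigcap_q W_q^M=\bigcap_q\overline{W_{q+1}^M}$ would be closed, forcing $\psi$ to be differentiable everywhere on the closure of its (dense) differentiability set. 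The obstruction is structural, not cosmetic: $\overline{U_{p,q+1}}\subset U_{p,q}$ makes each $\bigcap_q U_{p,q}$ compact, hence makes $\bigcup_p\bigcap_q U_{p,q}$ an $F_\sigma$ set; but $\tilde{\mathcal S}=(M\times N)\setminus E_0$ is the complement of an arbitrary $\gamma$-negligible $\sigma$-compact set, and (e.g.\ when $E_0$ contains a countable dense $\gamma$-null subset of $\mathcal S$) a Baire category argument shows such a $G_\delta$ is not $F_\sigma$. So the strengthened inductive hypothesis is unachievable, and the reverse inclusion in your inductive step --- which uses precisely $\overline{U_{a,r+1}^{(L-1,v)}}\subset U_{a,r}^{(L-1,v)}$ to push the limit witness $(x'_\infty,y)$ from $\bigcap_r\overline{U_{a,r}^{(L-1,v)}}$ into $\bigcap_r U_{a,r}^{(L-1,v)}$ --- collapses.

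The difficulty you are fighting (a subsequential limit of witnesses may escape the non-closed target set) is the real crux of the lemma, but the paper resolves it differently: it works in the compact space $\mathcal S^L$ of entire $L$-tuples, where the alternation-plus-separation constraints define a \emph{compact} set $S_p$, so limits of witness chains remain in $S_p$; the membership of the intermediate links in $\tilde{\mathcal S}$ is not forced on the limit chain but is instead carried along as the open constraints $S_q'=(V_q)^L$ inside the representation $U_{p,q}=\mbox{Proj}_L(S_p^q\cap S_q')$ itself. If you want to keep your inductive scheme, you would need to record, in the inductive data for level $L-1$, enough information about the whole chain (not just its endpoint) to recover a genuine chain in $\tilde{\mathcal S}$ in the limit --- which is essentially what the paper's tuple-space formulation does.
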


\begin{proof}[Proof of Lemma \ref{LEM3}]
Given $L \ge 3$ odd, 
we shall show that $\tilde{\mathcal{S}}_L^h$ has the asserted structure.
The other cases are left to the reader. Endow the manifolds $M$ and $N$ 
with Riemannian distances $d_M$ and $d_N$, and let $d_L$ denote the product
distance on the product manifold $(M\times N)^L$.
For every integer $p\geq 1$, denote by $S_p$ the set of $L$-tuples
$$
 \Bigl( (x_1,y_1), \ldots (x_L,y_L)\Bigr) \subset \mathcal{S}^L
$$
satisfying for every $i=1, \ldots, L-1$,
$$
\left\{ \begin{array}{lllll}
\mbox{for $l$ even}: & x_{l+1} = x_l & \mbox{and} & d_N(y_{l+1},y_l ) \geq 1/p, \\
\mbox{for $l$ odd:} &y_{l+1} = y_l & \mbox{and} & d_M(x_{l+1},x_l ) \geq 1/p. \\
\end{array}
\right.
$$
Since $\mathcal{S}$ is compact, the set $S_p$ is compact too.  

On the other hand,  $\sigma$-compactness of $E_0$ yields 
$(M \times N) \setminus E_0 = \bigcap_{q=1}^\infty V_q$ for a monotone sequence of open sets 
$V_q \subset V_{q-1}$ .
For every integer $q\geq 1$, we denote by $S_q'$ the open set of $L$-tuples
$$
 \Bigl( (x_1,y_1), \ldots (x_L,y_L)\Bigr) \subset \bigl( V_q \bigr)^L.
$$
A pair $(x,y) \in M\times N$ belongs to $\tilde{\mathcal{S}}_L^h $ if and only if there is $p\geq 1$ such that 
$$
(x,y) \in \mbox{Proj}_L \left( \bigcap_q \left( S_p \cap S_q'\right)\right),
$$
where the projection $ \mbox{Proj}_L : (M\times N)^L \rightarrow \mathcal{S}$ is defined by
$$
 \mbox{Proj}_L \left(  (x_1,y_1), \ldots (x_L,y_L)\right) := (x_L,y_L).
$$
For integers $p, q\geq 1$,  let $S_p^q$ the set of points which are at distance $d_L<1/q$ from $S_p$ in $(M\times N)^L$. Since $S_p$ is compact, for every $p$ we have
$$
 \bigcap_q \left( S_p \cap S_q'\right) =  \bigcap_q \left( S_p^q \cap S_q'\right).  
$$
Moreover since for every $p$, the sequence of sets $\{S_p^q \cap S_q'\}$ is non-increasing with respect to inclusion, we have  for every $p$,
$$
\mbox{Proj}_L \left( \bigcap_q \left( S_p^q \cap S_q'\right)\right) = \bigcap_q \mbox{Proj}_L \left( S_p^q \cap S_q'\right).
$$ 
The open sets $U_{p,q} = \mbox{Proj}_L \left( S_p^q \cap S_q'\right)$ then have the asserted monotonicities $U_{p-1,q} \subset U_{p,q} \subset U_{p,q-1}$ with respect to $p$ and $q$,
and we find $\tilde{\mathcal{S}}_L^h = \bigcup_{p=1}^\infty \bigcap_{q=1}^\infty U_{p,q}$
is the desired countable union of $G_\delta$ sets.
\end{proof}

\begin{corollary}[Borel measurability of projections]\label{CorProj}
For $i \ge 1$, the projections $\pi^M(\tilde{\mathcal{S}}_i)$ and $\pi^N(\tilde {\mathcal S}_i)$ of
 $\tilde{\mathcal{S}}_i$
(and of $\tilde{\mathcal{S}}_i^h, \tilde{\mathcal{S}}_i^v$ if $i>1$) 
take the form $\cup_{p=1}^\infty \cap_{q=1}^\infty V_{pq}$,  where $V_{1,1}$ and
$V_{p-1,q} \subset V_{p,q} \subset V_{p,q-1}$ are open for each $p,q\ge 2$.
\end{corollary}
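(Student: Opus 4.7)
My plan is to follow the construction in the proof of Lemma~\ref{LEM3} essentially verbatim, but postcomposing the last-pair projection $\mathrm{Proj}_L$ with the coordinate projection $\pi^M$ (respectively $\pi^N$). Concretely, let $\pi_L^M : (M \times N)^L \to M$ be defined by $\pi_L^M(z_1,\ldots,z_L) := \pi^M(z_L)$, and set
\begin{equation*}
V_{p,q} \;:=\; \pi_L^M\bigl(S_p^q \cap S_q'\bigr) \;=\; \pi^M(U_{p,q}),
\end{equation*}
where $S_p$, $S_p^q$ and $S_q'$ are the sets introduced in the proof of Lemma~\ref{LEM3}. Since $S_p^q \cap S_q'$ is open in the compact product $(M \times N)^L$ and $\pi_L^M$ is continuous and open, each $V_{p,q}$ is open in $M$. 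The required monotonicities $V_{p-1,q} \subset V_{p,q} \subset V_{p,q-1}$ are inherited directly from the corresponding monotonicities of $U_{p,q}$ (equivalently of $S_p^q \cap S_q'$), since projections preserve inclusions.

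It then remains to establish $\pi^M(\tilde{\mathcal{S}}_L^h) = \bigcup_p \bigcap_q V_{p,q}$. The inclusion ``$\subset$'' is immediate from $\pi^M \circ \mathrm{Proj}_L = \pi_L^M$ combined with the expression $\tilde{\mathcal{S}}_L^h = \bigcup_p \bigcap_q U_{p,q}$ supplied by the lemma. For ``$\supset$'', I would rerun the compactness argument of that lemma one step further: given $x \in \bigcap_q V_{p,q}$ for some fixed $p$, pick for each $q$ a tuple $\xi_q \in S_p^q \cap S_q'$ whose last pair has first coordinate $x$; extract a convergent subsequence in the compact space $(M \times N)^L$; and use $\bigcap_q \overline{S_p^q} = S_p$ (valid since $S_p$ is compact and $S_p^q$ is its $1/q$-neighborhood) together with the decreasing structure of $\{S_q'\}$ to conclude the limit $\xi_\infty$ lies in $S_p \cap \tilde{\mathcal{S}}^L$. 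The last pair of $\xi_\infty$ then has the form $(x, y_\infty) \in \tilde{\mathcal{S}}_L^h$, so $x \in \pi^M(\tilde{\mathcal{S}}_L^h)$.

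The identical argument with $\pi^N$ in place of $\pi^M$ handles the $N$-projection; the case $\tilde{\mathcal{S}}_L^v$ (and the reversed-parity versions of $L$) is handled by the symmetric modifications of $S_p$ alluded to parenthetically in Lemma~\ref{LEM3}. Finally, the unrestricted set $\tilde{\mathcal{S}}_i = \tilde{\mathcal{S}}_i^h \cup \tilde{\mathcal{S}}_i^v$ (for $i > 1$) can be recast into a single $\bigcup_p \bigcap_q$ form by pooling: define $\tilde V_{p,q} := \bigcup_{p' \le p}(V_{p',q}^h \cup V_{p',q}^v)$, which is open and monotone, and verify via a pigeonhole in $q$ (using that each of the $V_{p',q}^{h/v}$ is decreasing in $q$) that $\bigcup_p \bigcap_q \tilde V_{p,q}$ coincides with the union of the two original representations. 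The one genuinely delicate step is the compactness argument that places the subsequential limit $\xi_\infty$ inside $\tilde{\mathcal{S}}^L$ rather than merely inside the larger closure $\bigcap_q \overline{V_q}^L$; this is precisely the subtlety already dealt with in Lemma~\ref{LEM3}, and is invoked here only through that lemma.
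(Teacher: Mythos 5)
Your proposal is correct and takes essentially the same route as the paper: the paper's proof likewise sets $V_{p,q}=\pi^M(U_{p,q})$ (taking $U_{p,q}=U^h_{p,q}\cup U^v_{p,q}$ to handle the full set $\tilde{\mathcal S}_i$) and reads off $\pi^M(\tilde{\mathcal S}_i)=\cup_p\cap_q V_{p,q}$ from the monotonicity of the $U_{p,q}$ together with the compactness argument already carried out in Lemma~\ref{LEM3}. Your write-up merely makes explicit the pigeonhole and subsequence-extraction steps that the paper leaves implicit.
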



\begin{proof}
If $\tilde {\mathcal S}^{h/v}_i = \cup_p \cap_q U_{p,q}^{h/v}$ for $i>1$ with $U_{p-1,q}^{h/v} \subset U_{p,q}^{h/v} \subset U_{p,q-1}^{h/v}$ then setting $V_{p,q} = \pi^M(U_{p,q})$ with $U_{p,q} = U^h_{p,q} \cup U^v_{p,q}$ 
shows $\pi^M(\tilde {\mathcal S}_i) =\cup_p \cap_q V_{p,q}$ as desired.  The other cases are similar.
\end{proof}

We recall that a set $S \subset M\times N$ is called a {\em graph} if for every $(x,y)\in S$ there is no $y'\neq y$ such that $(x,y')\in S$. A set $S \subset M\times N$ is called an {\em antigraph} if for every $(x,y)\in S$ there is no $x'\neq x$ such that $(x',y)\in S$. Any graph is the graph of a function defined on a subset of $M$ and valued in $N$ while any antigraph is the graph of a function defined on a subset of $N$ and valued in $M$. We call Borel graph or Borel antigraph any graph or antigraph which is a Borel set in $M\times N$. We are now ready to construct our numbered limb system. \\

Motivated by the inclusion $\tilde {\mathcal S}_{k+1} \subset \tilde {\mathcal S}_k$, we set 
$E_1  
:= \tilde{\mathcal{S}}_1 \setminus \tilde {\mathcal{S}}_{2}$,
\begin{equation}\label{Eks}
E_k := \tilde{\mathcal{S}}_k \setminus \tilde {\mathcal{S}}_{k+1} \mbox{\ and}\ 
\left\{ \begin{array}{l}
E_k^h :=  E_k \cap \tilde{\mathcal{S}}_k^h, \quad E_k^{h-} = E_k \setminus \tilde{\mathcal{S}}_k^v,  \\
E_k^v:= E_k \cap \tilde{\mathcal{S}}_k^v, \quad E_k^{v-} = E_k \setminus \tilde{\mathcal{S}}_k^h, \\
E_k^{hv} := E_k^h \cap E_k^v, 
\end{array}
\right.
\quad \forall k \ge 2.
\end{equation}
Notice that $E_k$ consists precisely of the points in $\tilde{\mathcal S}$ at level $k$.
All these sets are Borel according to Lemma~\ref{LEM3}.
Letting $E_\infty := \bigcap_{k=1}^\infty \tilde{\mathcal S}_k$ gives a decomposition


\begin{eqnarray}\label{20may1}
\tilde{\mathcal{S}} = E_\infty \cup E_1 \cup \left( \bigcup_{k=2}^\infty \bigl( E_k^{h-} \cup E_k^{hv} \cup E_k^{v-}\bigr) \right)
\end{eqnarray} 
of ${\mathcal S}$ into disjoint Borel sets.
The next lemma implies the $E_k^h$ are graphs and the
 $E_k^v$ are antigraphs; $E_1$ is simultaneously a graph and an antigraph,
as are the $E_k^{hv}$.


\begin{lemma}[Graph and antigraph properties]\label{L-key}
(a) Let $(x,y_i) \in E_i$ and $(x,y_j) \in E_j$ with $j \ge i \ge 1$ and $y_i \ne y_j$.  Then $i\ge 2$.
Moreover, if $j>i$ then $(x,y_i) \in E_i^{h}$ and $(x,y_j) \in E_{i+1}^{v-}$ so $j=i+1$;  otherwise $j=i$ and 
both $(x,y_i),(x,y_j) \in E_i^{v-}$.  

(b) Similarly, suppose $(x_i,y) \in E_i$ and $(x_j,y) \in E_j$ with $j \ge i \ge 1$ and $x_i \ne x_j$.
Then $i=2$ and 
if $j>i$ then $(x_i,y) \in E_i^{v}$ and $(x_j,y) \in E_{i+1}^{h-}$so $j=i+1$;  otherwise $j=i$ and 
both $(x,y_i),(x,y_j) \in E_i^{h-}$. 
\end{lemma}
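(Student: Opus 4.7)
The argument rests on two observations. First, $\tilde{\mathcal S} \subset \mathcal S$ is $c$-cyclically monotone, so the only substantive constraint on forming a chain inside $\tilde{\mathcal S}$ is the horizontal/vertical alternation. Second, Proposition \ref{PROP2} gives $\frac{\partial c}{\partial x}(x, y) = -d\psi(x)$ for every $(x, y) \in \tilde{\mathcal S}$, so any two distinct pairs $(x, y), (x, y') \in \tilde{\mathcal S}$ automatically satisfy the vertical-link identity $\frac{\partial c}{\partial x}(x, y) = \frac{\partial c}{\partial x}(x, y')$; symmetrically for horizontal links using $d\phi$. Consequently, if $i = 1$ in part (a) then $((x, y_j),(x, y_i))$ is a legitimate $2$-chain in $\tilde{\mathcal S}$ ending at $(x, y_i)$, forcing $\ell(x, y_i) \geq 2$ and giving a contradiction; hence $i \geq 2$.

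For the case $j = i$, suppose one of the two pairs---say $(x, y_i)$---admitted an $i$-chain ending horizontally. Appending $(x, y_j)$ as a new final vertex produces a sequence whose last step $(x, y_i) \to (x, y_j)$ is vertical (since $y_i \neq y_j$); because the preceding step was horizontal, alternation is respected at the new position, and all earlier chain conditions are untouched. The result is a valid $(i+1)$-chain ending at $(x, y_j)$, contradicting $\ell(x, y_j) = i$. Hence neither pair lies in $\tilde{\mathcal S}_i^h$, so both belong to $E_i \setminus \tilde{\mathcal S}_i^h = E_i^{v-}$.

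For the case $j > i$, the same appending argument shows that no $j$-chain ending at $(x, y_j)$ can terminate horizontally, for otherwise it extends to a $(j+1)$-chain ending at $(x, y_i)$, violating $\ell(x, y_i) = i$. Thus $(x, y_j) \in E_j \setminus \tilde{\mathcal S}_j^h = E_j^{v-}$. Now fix a $j$-chain $((x_1, y_1), \ldots, (x_j, y_j) = (x, y_j))$ ending vertically, so $x_{j-1} = x$ and $y_{j-1} \neq y_j$, and consider the modified sequence $((x_1, y_1), \ldots, (x_{j-1}, y_{j-1}), (x, y_i))$. If $y_{j-1} \neq y_i$, the new final step remains a vertical step (since $x_{j-1} = x$), the alternation condition at $l = j-1$ reads $y_j = y_{\min\{j,j+1\}} = y_j$ trivially, earlier conditions are unchanged, and $c$-cyclical monotonicity is inherited from $\tilde{\mathcal S}$. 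This would be a $j$-chain ending at $(x, y_i)$, contradicting $\ell(x, y_i) = i < j$. Therefore $y_{j-1} = y_i$, meaning $(x, y_i) = (x_{j-1}, y_{j-1})$ is the penultimate vertex. Truncating to the first $j-1$ terms yields a chain of length $j-1$ ending at $(x, y_i)$, so $i \geq j - 1$; combined with $j > i$ we get $j = i+1$. Moreover, in the original chain the step $j-1 \to j$ was vertical, so by alternation the step $j-2 \to j-1$ was horizontal, showing that the truncated chain ends horizontally at $(x, y_i)$. Hence $(x, y_i) \in \tilde{\mathcal S}_i^h \cap E_i = E_i^h$, as required.

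Part (b) follows by symmetry: exchange the roles of $M$ and $N$, of $\psi$ and $\phi$ (both Lipschitz and differentiable on $\pi^N(\tilde{\mathcal S})$ by Propositions \ref{P Clarke}--\ref{PROP1}), and of horizontal with vertical steps---the appending/swap constructions and the alternation bookkeeping are formally identical. The most delicate point is the swap argument in the $j > i$ case: one must verify that replacing the terminal vertex preserves chain-hood despite the fact that a different pair now ends the sequence, which is exactly where the automatic $c$-cyclical monotonicity of $\tilde{\mathcal S}$ and the vertical-link identity from Proposition \ref{PROP2} are indispensable.
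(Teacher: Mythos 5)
Your proof is correct and rests on exactly the same two moves as the paper's own argument: appending a point to a horizontally-ending chain to force the level up (yielding $i\ge 2$ and $(x,y_j)\in E_j^{v-}$), and swapping/truncating at the terminal vertex of a vertically-ending $j$-chain to identify the penultimate pair as $(x,y_i)$ and conclude $j=i+1$ with $(x,y_i)\in E_i^h$. The only difference is organizational — you case-split on $j=i$ versus $j>i$ while the paper splits on whether $(x,y_i)$ lies in $E_i^h$ or $E_i^{v-}$ — and the conclusions coincide.
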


\begin{proof}
(a) Let $(x,y_i) \in E_i$ and $(x,y_j) \in E_j$ with $j \ge i \ge 1$ and $y_i \ne y_j$.
Then 
$(x,y_i), (x,y_j)$ form a $2$-chain and both points lie in $\tilde {\mathcal S}_2$,
forcing $i\ge 2$.   If $(x,y_j) \in E_j^h$, there is a $j$-chain in $\tilde {\mathcal S}$ terminating
in the horizontal end $(x,y_j)$.  Appending $(x,y_i)$ to this chain produces a chain of length $j+1$
with vertical end $(x,y_i)$, whence $i=\ell(x,y_i) \ge j+1$.  This contradicts our hypothesis $i \le j$.  We
therefore conclude
$(x,y_j) \in E_j^{v-}$. 
Note that if $(x,y_i) \in E_i^h$,  the same argument shows
\begin{equation}\label{stepping stone}
j=\ell(x,y_j) \ge i+1.
\end{equation}
Whether or not this is true, ${\mathcal S}$ contains a $j$-chain 
\begin{equation}\label{key lemma chain}
\Bigl( (x_1',y_1'),\ldots, (x_j',y_j')\Bigr)
\end{equation}
terminating in the vertical end $(x,y_j)$, so
 $$
x = x_j'= x_{j-1}', \quad \mbox{and } \quad y_j = y_j' \ne y'_{j-1}.
 $$

Now either (c) $(x,y_i) \in E_i^h$ or (d) $(x,y_i) \in E_i^{v-}$.
In case (c), we claim $y_{j-1}'=y_i$.  Otherwise the sequence
 $$
 \Bigl(  (x_1',y_1'),\ldots, (x_{j-1}',y_{j-1}'), (x,y_i)\Bigr)
 $$
would be a $j$-chain in $\tilde{\mathcal{S}}$ of length
$j \le \ell(x,y_i) =i$, contradicting \eqref{stepping stone}.  
Thus $y_{j-1}'=y_i$ and $i=\ell(x,y_i) \ge j-1$, which implies equality holds in 
\eqref{stepping stone}.

In case (d), $(x,y_i) \in E_i^{v-}$, we replace $(x_j',y_j')$ with $(x,y_i)$
in \eqref{key lemma chain} to produce a chain of length $j \le \ell(x,y_i)=i$,
forcing $i=j$ as desired.

Part (b) of the lemma now follows from part (a) by symmetry.
\end{proof}

We define the graphs and antigraphs of our numbered limb system.
\begin{eqnarray} \label{Gis}
\nonumber G_1 :=& E_1 \cup E_2^{h-}, & \\
G_{2i} :=& E_{2i}^{v-} \cup E_{2i}^{hv} \cup E_{2i+1}^{v-} &= E_{2i}^v \cup E_{2i+1}^{v-}, \qquad {\rm and} \\
\nonumber G_{2i+1} :=& E_{2i+2}^{h-} \cup E_{2i+1}^{hv} \cup E_{2i+1}^{h-} &= E_{2i+2}^h \cup E_{2i+1}^{h-}  
\end{eqnarray}
for all integers $i \in \N^*$, 
and adopt the convention $G_0 =\emptyset$.

\begin{lemma}[Disjointness of domains and ranges]
\label{L disjoint domains and ranges}
For $k \in \N$ set
\begin{equation}\label{Iks}
I_k = \left\{
\begin{array}{ll}
\pi^M(G_{k}\cup G_{k+1}) & \mbox{\rm if $k$ odd and} \cr
\pi^N(G_{k} \cup G_{k+1}) & \mbox{\rm if $k$ even.}
\end{array}\right.
\end{equation}
Then the subsets $\{I_{2i+1}\}_{i=0}^\infty$ of $M$ are disjoint,  as are the subsets 
$\{I_{2i}\}_{i=1}^\infty$ of $N$.
\end{lemma}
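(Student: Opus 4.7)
The plan is to reduce each potential overlap $x \in I_{2i+1} \cap I_{2j+1}$ (with $i < j$) to a pair of points in $\tilde{\mathcal{S}}$ sharing an $M$-coordinate, and then rule it out using the rigid dichotomy of Lemma~\ref{L-key}(a). The $N$-side disjointness $I_{2i} \cap I_{2j} = \emptyset$ is handled symmetrically with Lemma~\ref{L-key}(b), so I focus on the $M$-side.

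The preparatory step is a bookkeeping pass over \eqref{Eks}--\eqref{Gis}: every level-$k$ piece $E_k^{h-}, E_k^{hv}, E_k^{v-}$ (and the whole of $E_1$) lies in exactly one $G_j$, so the family $\{G_j\}_{j \ge 1}$ is pairwise disjoint and partitions $\tilde{\mathcal{S}} \setminus E_\infty$. Collecting the pieces, one computes
$$
G_{2i+1} \cup G_{2i+2} = E_{2i+1}^{h} \cup E_{2i+2} \cup E_{2i+3}^{v-} \qquad (i \ge 1),
$$
and $G_1 \cup G_2 = E_1 \cup E_2 \cup E_3^{v-}$ when $i = 0$. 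Thus the level range of $G_{2i+1} \cup G_{2i+2}$ is $\{2i+1, 2i+2, 2i+3\}$, with the ``end-cap'' restriction that only $h$-type pieces appear at the low level and only $E_{2i+3}^{v-}$ at the high level, while all three types appear at the middle level. This is precisely the feature engineered into \eqref{Gis} to let Lemma~\ref{L-key}(a) bite.

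Now fix $i < j$ and suppose $x \in I_{2i+1} \cap I_{2j+1}$, witnessed by $(x, y) \in G_{2i+1} \cup G_{2i+2}$ and $(x, y') \in G_{2j+1} \cup G_{2j+2}$. If $y = y'$ then the single point lies in two disjoint $G_j$-unions, since the index sets $\{2i+1, 2i+2\}$ and $\{2j+1, 2j+2\}$ are disjoint once $j \ge i+1$. Otherwise $y \ne y'$, and setting $a := \ell(x, y), b := \ell(x, y')$, Lemma~\ref{L-key}(a) forces $a \ge 2$, $|a - b| \le 1$, and either $a = b$ with both witnesses in $E_a^{v-}$, or $b = a + 1$ with the lower-level witness in $E_a^h$ and the higher-level witness in $E_{a+1}^{v-}$. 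For $j \ge i + 2$ the admissible level ranges are separated by at least $2$, contradicting $|a-b| \le 1$. For $j = i+1$ the level overlap is precisely $\{2i+3\}$, and the three L-key-admissible pairs $(a, b) \in \{(2i+2, 2i+3), (2i+3, 2i+3), (2i+3, 2i+4)\}$ each demand a witness of $v-$-type at level $2i+3$ where $I_{2j+1}$ only permits $h$-type (or, in the last subcase, an $h$-type witness where $I_{2i+1}$ only permits $v-$-type); since $E_{2i+3}^{v-} \cap E_{2i+3}^h = \emptyset$, each subcase closes.

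The $N$-side is entirely symmetric: a parallel computation gives $G_{2i} \cup G_{2i+1} = E_{2i}^v \cup E_{2i+1} \cup E_{2i+2}^{h-}$ (the mirror ``$v$-low, $h-$-high'' structure), and Lemma~\ref{L-key}(b) replaces (a). The main obstacle is not conceptual---Lemma~\ref{L-key} does the heavy lifting---but organizational: the proof hinges on carefully distinguishing the five superscripted flavours $E_k^h, E_k^v, E_k^{hv}, E_k^{h-}, E_k^{v-}$, on tabulating which of them sits in which $G_j$, and on recognizing that the type restrictions at the end caps of the two $G_j$-unions in \eqref{Gis} are exactly those incompatible with the coincidence structure Lemma~\ref{L-key} allows.
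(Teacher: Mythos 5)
Your proof is correct and follows essentially the same route as the paper: produce two witnesses $(x,y)$ and $(x,y')$ over the same point $x$ and invoke Lemma~\ref{L-key}(a) (resp.\ (b) for the $N$-side). In fact your case analysis is more complete than the paper's, which lists only the $h$-type components $E_{2j+1}^h\cup E_{2j+2}^{h-}=G_{2j+1}$ as possible witnesses for $x\in I_{2j+1}$ and silently omits those coming from $G_{2j+2}=E_{2j+2}^{v}\cup E_{2j+3}^{v-}$, whereas you track every piece of $G_{2i+1}\cup G_{2i+2}$ and $G_{2j+1}\cup G_{2j+2}$, treat the $y=y'$ possibility via disjointness of the $G_j$, and close the genuine borderline case $j=i+1$ (level overlap at $2i+3$) using both the level constraint $|a-b|\le 1$ and the end-cap type restrictions.
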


\begin{proof}
For $i=\N$, we shall show the sets $I_{2i+1} \subset M$ are disjoint.  
Disjointness of the subsets $\{I_{2i}\}_{i=1}^{\infty}$ of $N$ is proved similarly, using Lemma
\ref{L-key}(b).

To derive a contradiction, suppose $x \in I_{2i+1} \cap I_{2j+1}$ with $i< j$.   
Depending on whether $i=0$ or $i\ge 1$,
there exist $(x,y) \in E_{1} \cup E_{2}^{h-} \cup E_{2i+1}^h \cup E_{2i+2}^{h-}$ and 
$(x,y') \in E_{2j+1}^h \cup E_{2j+2}^{h-}$.
Since $2i+2<2j+1$ disjointness of the $E_k$ imply $y \ne y'$.
Lemma \ref{L-key}(a) then asserts $(x,y') \in E_{2j+1}^{v-} \cup E_{2j+2}^{v-}$ --- the desired 
contradiction.  Thus the subsets $\{I_{2i+1}\}_{i=0}^{\infty}$ of $M$ are disjoint.
\end{proof}

\begin{lemma}[Numbered limbs]\label{L numbered limbs}
The Borel sets $\{G_{2i+1}\}_{i=0}^{\infty}$ of \eqref{Gis} form the graphs and 
$\{G_{2i}\}_{i=1}^{\infty}$ form the antigraphs of a numbered limb system:
$G_{2i+1} = \graph(f_{2i+1})$ and $G_{2i}=\antigraph(f_{2i})$,  with 
$\dom f_k \cup \ran f_{k+1} \subset I_k$ from \eqref{Iks} for all $k \in \{0,1,\ldots,K\}$.
\end{lemma}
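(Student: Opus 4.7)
The plan is threefold: (1) show each $G_{2i+1}$ is a graph and each $G_{2i}$ an antigraph; (2) define the functions $f_k$ corresponding to these graphs and antigraphs; (3) verify the inclusion $\Dom f_k \cup \Ran f_{k+1} \subset I_k$. Lemma \ref{L-key} will do essentially all of the work. The key tautologies are the disjointnesses $E_k^{h-} \cap E_k^{v-} = \emptyset$ and $E_k^{h} \cap E_k^{v-} = \emptyset$ (and their symmetric counterparts), which follow directly from the definitions \eqref{Eks} combined with the fact that any point at level $k \ge 2$ terminates a $k$-chain and hence lies in $\tilde{\mathcal{S}}_k^h \cup \tilde{\mathcal{S}}_k^v$.

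To show $G_{2i+1} = E_{2i+2}^h \cup E_{2i+1}^{h-}$ is a graph, I would suppose $(x,y_1), (x,y_2) \in G_{2i+1}$ with $y_1 \ne y_2$ and apply Lemma \ref{L-key}(a), letting the point of smaller level play the role of ``$(x,y_i)$'' and the other that of ``$(x,y_j)$''. Three cases arise:
(i) both points in $E_{2i+1}^{h-}$, in which case the lemma (with $j=i$) forces both into $E_{2i+1}^{v-}$;
(ii) both points in $E_{2i+2}^h$, in which case the lemma forces both into $E_{2i+2}^{v-}$;
(iii) one point in each, in which case the lemma (with $j=i+1$) pins the higher-indexed point into $E_{2i+2}^{v-}$, contradicting its membership in $E_{2i+2}^h$.
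Each case violates one of the disjointnesses of paragraph one. The base case $G_1 = E_1 \cup E_2^{h-}$ is easier still: the conclusion ``$i \ge 2$'' of Lemma \ref{L-key}(a) immediately excludes any point of $E_1$ from sharing its $x$-coordinate with a distinct second point of $G_1$, and the remaining possibility of both in $E_2^{h-}$ is handled as in (i). By symmetry, Lemma \ref{L-key}(b) yields that each $G_{2i}$ is an antigraph.

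With the graph/antigraph property in hand, define $f_{2i+1}$ on $\Dom f_{2i+1} := \pi^M(G_{2i+1})$ by sending $x$ to the unique $y$ with $(x,y) \in G_{2i+1}$, and analogously for $f_{2i}$ on $\Dom f_{2i} := \pi^N(G_{2i})$. Each $G_k$ is Borel as a Boolean combination of the Borel sets furnished by Lemma \ref{LEM3}, which is more than enough for the $\gamma$-measurability that Theorem \ref{THMlimb} will require of the graphs and antigraphs. For odd $k$,
$$
\Dom f_k \cup \Ran f_{k+1} \;=\; \pi^M(G_k) \cup \pi^M(G_{k+1}) \;=\; \pi^M(G_k \cup G_{k+1}) \;=\; I_k,
$$
the identity $\Ran f_{k+1} = \pi^M(G_{k+1})$ holding because $G_{k+1}$ is the antigraph of the $M$-valued function $f_{k+1}$. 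The even case is identical with $\pi^N$ replacing $\pi^M$, and the boundary case $k=0$ is trivial since $G_0 = \emptyset$ forces $\Dom f_0 = \emptyset$.

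The main obstacle is the careful bookkeeping of which stratum $E_k^*$ can accommodate a hypothetical second point sharing a coordinate with a given one. Lemma \ref{L-key} is engineered precisely to trap that second point into the ``opposite'' stratum ($E_k^{v-}$ when one expects $E_k^{h-}$ or $E_k^h$, and vice versa), so once the disjointnesses of paragraph one are noted the contradictions materialize automatically; the remaining verifications are bookkeeping on projections and the Boolean structure of \eqref{Eks}.
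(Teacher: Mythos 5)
Your proof is correct and follows essentially the same route as the paper's: Lemma \ref{L-key} traps any hypothetical second point sharing a coordinate into a stratum $E_k^{v-}$ (resp.\ $E_k^{h-}$) disjoint from those defining $G_{2i+1}$ (resp.\ $G_{2i}$), and the inclusion $\dom f_k \cup \ran f_{k+1} \subset I_k$ is read off from \eqref{Iks}. The only detail you omit is that the sets $I_k$ of \eqref{Iks} need not cover $M$ and $N$ as the decomposition in Definition \ref{DEFlimb} requires; the paper fixes this by absorbing $M \setminus \bigcup_i I_{2i+1}$ into $I_1$ and $N \setminus \bigcup_i I_{2i}$ into $I_0$.
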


\begin{proof}
The sets $G_k$ are Borel by their construction \eqref{Eks}, \eqref{Gis} and Lemma
\ref{LEM3}.
If $i>0$ we claim
$G_{2i+1} := E_{2i+2}^{h-} \cup E_{2i+1}^{hv} \cup E_{2i+1}^{h-}$ is a graph:
Let $(x,y) \ne (x,y')$ be distinct points in $G_{2i+1}$.  
Lemma \ref{L-key}(a) asserts that at least one of the 
two points lies in $E_{2i+1}^{v-}$ or $E_{2i+2}^{v-}$ --- a contradiction.
The fact that $G_{2i}$ is an antigraph follows by symmetry,  and the fact that $G_1$ 
is a graph is checked similarly.

We can therefore write $G_{2i+1} = \graph(f_{2i+1})$ and $G_{2i}=\antigraph(f_{2i})$
for some sequence of maps $f_k: \dom f_k \longrightarrow \ran f_k$ with domains
$\dom f_k \subset M$ and ranges $\ran f_k \subset N$ if $k$ odd,  and 
$\dom f_k \subset N$ and $\ran f_k \subset M$ if $k$ even.
The fact that $\dom f_k \cup \ran f_{k+1} \subset I_k$ follows directly from \eqref{Iks},
while
Lemma \ref{L disjoint domains and ranges} implies disjointness of the $I_{2i+1} \subset M$
and of the $I_{2i} \subset N$.
If
$\tilde M =M \setminus \cup_{i=0}^{\infty} I_{2i+1}$ or 
$\tilde N:= N \setminus \cup_{i=0}^{\infty} I_{2i}$ is non-empty,  we replace $I_0$ by $I_0 \cup \tilde N$
and $I_1$ by $I_1 \cup \tilde M$ to complete our verification of the properties of a numbered limb 
system (Definition \ref{DEFlimb}).
\end{proof}


\begin{proof}[Proof of Theorem \ref{THM1} and Remark \ref{R singular measures}]
To recapitulate:  Gangbo and McCann \cite{gm96} provide a $c$-compact set
${\mathcal S}$ containing the support of every optimizer $\gamma \in \Pi_0$,  and
a pair of Lipschitz potentials \eqref{pot1}--\eqref{potincl} such that ${\mathcal S} \subset \partial_c\psi$.
We take $\mathcal S$ to be the minimal such set without loss of generality.   Proposition \ref{PROP1}
shows $M_0:=M \setminus \dom d\psi$ to be $\mu$-negligible and $N_0:=N \setminus \dom d\phi$
to be $\nu$-negligible;  both are $\sigma$-compact by Proposition \ref{P Clarke}.
Without loss of generality, we therefore assume $M_0 \times N_0 \subset E_0$ and $M \times N \setminus \mathcal{S} \subset E_0$, 
the $\gamma$-negligible $\sigma$-compact set. Lemma \ref{L numbered limbs}
provides a decomposition \eqref{20may1} of $\tilde {\mathcal S} :=M \times N \setminus E_0$ into a numbered limb system consisting of Borel graphs and antigraphs --- apart from a Borel set $E_\infty = \bigcap \tilde {\mathcal S}_k$.
But we have $\gamma(E_\infty)=0$ for each $\gamma\in\Pi_0$ by hypothesis. Theorem~\ref{THMlimb} therefore asserts that
at most one $\gamma \in \Pi_0$ vanishes outside $\tilde {\mathcal S} \setminus E_\infty$.
But since all $\gamma \in \Pi_0$ have this property, $\Pi_0$ must be a singleton.  
Finally,  since $\tilde {\mathcal S}_{k+1} \subset \tilde {\mathcal S}_k$ we see 
$\pi^{M}(E_\infty) = \bigcap_{k=1}^\infty \pi^{M}(\tilde {\mathcal S}_k)$ and $\pi^N(E_\infty)$ are Borel using
Corollary \ref{CorProj}.
\end{proof}

\section{Proof of Theorem \ref{THMmotivation}}

Noting $\dim M=\dim N$, let $(\bar{x},\bar{y}) \in M \times N$ be such that $\frac{\partial^2 c}{\partial y\partial x} (\bar{x},\bar{y})$ is invertible.
The mapping 
$$
F \, : \, y \in N \, \longmapsto \, \frac{\partial c}{\partial x}(\bar{x},y) \in T_{\bar{x}}M
$$
is $C^1$ and since its differential at $\bar{y}$ is not singular, its image contains an open set in $T_{\bar{x}}M$. By Sard's theorem (see  \cite[\S 3.4.3]{Federer69}), the image of critical points of $F$ has Lebesgue
 measure zero, so we may assume without loss of generality that $F(\bar{y})$ is a regular value of $F$, meaning there is no $y$ with $\frac{\partial^2 c}{\partial y\partial x} (\bar{x},y)$ singular such that $F(y)=F(\bar{y})$.  The next lemma then follows from topological arguments.

\begin{lemma}[Generic failure of twist]
\label{LEMtopology}
Fix $(\bar x,\bar y) \in M \times N$ such that $F(\bar y)$ is a regular value of $F(y) =\frac{\partial c}{\partial x}(\bar x,y)$. There is $ \hat{y} \in N$ such that $F(\hat y) = F(\bar y)$, i.e.
\begin{eqnarray}\label{form1}
\frac{\partial c}{\partial x} \bigl(\bar{x},\bar{y}\bigr) = \frac{\partial c}{\partial x} \bigl(\bar{x},\hat{y}\bigr).
\end{eqnarray}
\end{lemma}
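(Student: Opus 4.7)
The goal is to produce a point $\hat y \in N$ distinct from $\bar y$ with $F(\hat y) = F(\bar y)$; then $\bar y \ne \hat y$ both yield the same partial derivative of $c$ in $x$, giving \eqref{form1}. The plan is to use a mod 2 degree (intersection) argument applied to the $C^1$ map $F:N \to T_{\bar x}M \cong \R^m$, exploiting that $\dim N = \dim M = m$, that $N$ is closed (compact without boundary), and that the target is contractible.

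First, I will observe that because $F(\bar y)$ is a regular value of $F$ and $N$ is compact, the preimage $F^{-1}(F(\bar y))$ is a finite set. Indeed, at every preimage of a regular value, $DF$ is invertible, so the inverse function theorem provides pairwise disjoint neighborhoods mapped diffeomorphically to a common neighborhood of $F(\bar y)$; finiteness follows from compactness of $N$. In particular one can choose an open ball $U \subset T_{\bar x}M$ around $F(\bar y)$ consisting entirely of regular values (the critical values form a closed set, being the continuous image of the closed set of critical points in compact $N$), and then $F$ restricted to $F^{-1}(U)$ is a finite-to-one local diffeomorphism onto $U$, with fiber cardinality constant and equal to $|F^{-1}(F(\bar y))|$.

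Second, I will invoke mod 2 degree theory: for a smooth map from a closed $m$-manifold to $\R^m$, the parity of $|F^{-1}(p)|$ is the same for every regular value $p$, and is invariant under smooth homotopy of $F$ through maps for which $p$ remains a regular value (or, more robustly, one works with $\deg_2$, which is homotopy invariant without such restrictions). Since $T_{\bar x}M \cong \R^m$ is contractible, $F$ is smoothly homotopic to a constant map $F_1 \equiv q$ for any fixed $q \in T_{\bar x}M$; choosing $q \ne F(\bar y)$, the constant map has empty preimage of $F(\bar y)$, so its mod 2 degree at $F(\bar y)$ is $0$. Homotopy invariance then forces $|F^{-1}(F(\bar y))|$ to be even.

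Finally, since $\bar y \in F^{-1}(F(\bar y))$ shows this preimage is nonempty and its cardinality is even, there must exist at least one further point $\hat y \in F^{-1}(F(\bar y)) \setminus \{\bar y\}$. Then $\bar y \ne \hat y$ and $F(\hat y) = F(\bar y)$, which is precisely the identity \eqref{form1}. The main delicate step is the second one, namely pinning down the appropriate mod 2 degree framework and checking homotopy invariance without orientability hypotheses on $N$; this can either be cited from standard references on differential topology or replaced by a hands-on argument that a proper local diffeomorphism $F:F^{-1}(U) \to U$ between manifolds of the same dimension has constant fiber cardinality on connected components, combined with the fact that since $F(N) \subsetneq \R^m$ is compact, one can continuously slide a regular value out of $F(N)$ and track the parity of the preimage count.
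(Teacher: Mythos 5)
Your argument is correct, and it reaches the conclusion by a route that is topologically equivalent to the paper's but packaged quite differently. You compute the mod~2 degree of $F:N\to T_{\bar x}^*M\cong\R^m$ directly: since the target is contractible (equivalently, since the compact image $F(N)$ misses some point of $\R^m$), $\deg_2 F=0$, so the fiber over any regular value has even cardinality, and nonemptiness of $F^{-1}(F(\bar y))$ then forces a second point $\hat y\ne\bar y$. The paper instead argues by contradiction: assuming $\bar y$ is the only preimage, it forms the Gauss-type map $G(y)=(F(y)-F(\bar y))/|F(y)-F(\bar y)|$ on $N\setminus\{\bar y\}$, picks a regular value $\lambda$ via Sard, and shows the closure of $G^{-1}(\lambda)$ is a compact one-dimensional manifold with exactly one boundary point ($\bar y$ itself, using that $F$ is a local diffeomorphism there), contradicting the even parity of boundaries of compact $1$-manifolds. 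Both proofs ultimately rest on the same Milnor-style parity principle; yours is shorter and yields slightly more (the full fiber has even cardinality), at the cost of importing homotopy invariance of $\deg_2$ as a black box, while the paper's is self-contained modulo the classification of compact $1$-manifolds. One small point you gloss over and the paper handles explicitly: $F$ is only $C^1$ (as $c\in C^2$), whereas the smooth homotopy/Sard machinery in the textbook form of degree theory wants more regularity (Sard for the homotopy $N\times[0,1]\to\R^m$ needs $C^2$); this is resolved either by citing mod~2 degree for continuous maps or, as the paper does, by replacing $F$ with a nearby smooth regularization, which is harmless because $F$ is a local diffeomorphism near $\bar y$ and the hypothesis is an open condition.
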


\begin{proof}[Proof of Lemma \ref{LEMtopology}]
 We argue by contradiction and assume that 
\begin{eqnarray*}
\forall y \in N, \quad y\neq \bar{y} \, \Longrightarrow \, F(y) \neq  F(\bar{y}).
\end{eqnarray*}
Note that since $F$ is a local diffeomorphism in a neighborhood of $\bar{y}$, the above condition still holds if we replace $F$ by $\tilde{F}$ a smooth (of class $C^{\infty}$) regularization of $F$ sufficiently close to $F$.  So without loss of generality we may assume that $F$ is smooth. Define the mapping $G: N \setminus \{\bar{y}\} \rightarrow \bfS^{n-1}$ by 
$$
G(y):= \frac{ F(y)-F(\bar{y})}{|F(y)-F(\bar{y})|} \qquad \forall y \in N \setminus \{\bar{y}\}.
$$ 
The mapping $G$ is smooth, so by Sard's Theorem it has a regular value $\lambda$. Then the set 
$$
G^{-1}(\lambda) := \Bigl\{y\in N \setminus \{\bar{y}\} \, \vert \, G(y) = \lambda \Bigr\}
$$
is a one dimensional submanifold of $N \setminus \{\bar{y}\}$. Moreover, since  the differential of $F$ at $\bar{y}$ is invertible, there are a open neighborhood $\mathcal U$ of $\bar{y}$ and  a $C^1$ curve $\gamma :[-\epsilon,\epsilon] \rightarrow N$ with $\gamma(0)=\bar{y}$ and $\dot{\gamma}(0)\neq 0$ such that
$$
G\left( \gamma(\pm t)\right) = \pm \lambda \qquad \forall t \in (0,\epsilon]
$$
and 
$$
G^{-1}(\lambda) \cap \mathcal{U} = \gamma \left((0,\epsilon)\right).
$$
This shows that the closure of $G^{-1}(\lambda)$ is a compact one dimensional submanifold whose boundary is $\bar{y}$. But the boundary of any compact submanifold of dimension one  is a finite set with even cardinal (see \cite{milnor97}), a contradiction.
\end{proof}

We need now to construct a $c$-convex function whose $c$-subdifferential at each point near $\bar x$ takes values near both $\bar{y}$ and $\hat{y}$. We  note that since $F(\bar{y})$ is a regular value of  $F(y) =\frac{\partial c}{\partial x}(\bar x,y)$ and $F(\hat y) = F(\bar y)$, both linear mappings $D_yF(\bar{y}), D_yF(\hat{y})$ are invertible.


\begin{lemma}\label{LEMpotentials}
There is a pair of functions $\psi : M \rightarrow \R, \, \phi : N \rightarrow \R$ such that 
\begin{eqnarray}\label{LEMpotentials1}
\psi(x) = \max_{y\in N} \Bigl\{ \phi(y) - c(x,y)\Bigr\} \qquad \forall x \in M
\end{eqnarray}
and
\begin{eqnarray}\label{LEMpotentials2}
\phi(y) = \min \Bigl\{ \psi(x)+c(x,y) \, \vert \, x \in M\Bigr\} \qquad \forall y \in N,
\end{eqnarray}
together with an open neighborhood $\bar{U}$ of $\bar{x}$, two open neighborhoods $\bar{V} \subset N$ of $\bar{y}$ and $\hat{V} \subset N$ of $\hat{y}$ with $\bar{V} \cap \hat{V}=\emptyset$, and two $C^1$ diffeomorphisms 
$$
\bar{y} : \bar{U} \rightarrow \bar{V} \quad \hat{y} : \bar{U} \rightarrow \hat{V} 
$$
with
$$
\bar{y} (\bar{x}) = \bar{y} \quad \mbox{and} \quad \hat{y} = \hat{y}(\bar{x}),
$$
such that
\begin{eqnarray}\label{LEMpotentials3}
\partial_c\psi(x) \cap \left( \bar{V} \cup \hat{V}\right)= \Bigl\{ \bar{y}(x), \hat{y}(x) \Bigr\} \qquad \forall x \in \bar{U}.
\end{eqnarray}
\end{lemma}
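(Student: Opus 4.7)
The idea is a local-to-global construction.  Since $F(\bar y) = F(\hat y)$ is a regular value of $F$ and $\dim M = \dim N$, both mixed Hessians $\frac{\partial^2 c}{\partial y \partial x}(\bar x, \bar y)$ and $\frac{\partial^2 c}{\partial y \partial x}(\bar x, \hat y)$ are invertible.  Working in a chart around $\bar x$, set
\[
\psi_0(x) := -\bigl\langle F(\bar y),\, x - \bar x\bigr\rangle + \frac{\Lambda}{2}|x - \bar x|^2
\]
for a parameter $\Lambda > 0$ to be fixed large.  Applying the implicit function theorem to $\frac{\partial c}{\partial x}(x, y) = -d\psi_0(x)$ near $(\bar x, \bar y)$ and near $(\bar x, \hat y)$ yields disjoint open neighborhoods $\bar V \ni \bar y$ and $\hat V \ni \hat y$, and (after shrinking $\bar U \ni \bar x$) smooth maps $\bar y : \bar U \to \bar V$ and $\hat y : \bar U \to \hat V$ with $\bar y(\bar x) = \bar y$, $\hat y(\bar x) = \hat y$, and
\[
\frac{\partial c}{\partial x}\bigl(x, \bar y(x)\bigr) \;=\; \frac{\partial c}{\partial x}\bigl(x, \hat y(x)\bigr) \;=\; -d\psi_0(x) \quad \text{throughout } \bar U.
\]
For $\Lambda$ large, the Jacobians $D\bar y(\bar x)$ and $D\hat y(\bar x)$ (computed from implicit differentiation) are invertible, so $\bar y$ and $\hat y$ are local $C^1$ diffeomorphisms onto their images.

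Next, I enforce both $\bar y(x),\, \hat y(x) \in \partial_c \psi_0(x)$ for $x \in \bar U$ by defining $\phi$ on $\bar V \cup \hat V$ via the pushforward formulas
\[
\phi(y) := \psi_0\bigl(\bar y^{-1}(y)\bigr) + c\bigl(\bar y^{-1}(y),\, y\bigr)\ \text{on } \bar V, \quad \phi(y) := \psi_0\bigl(\hat y^{-1}(y)\bigr) + c\bigl(\hat y^{-1}(y),\, y\bigr)\ \text{on } \hat V,
\]
which immediately give $\phi(\bar y(x)) - c(x, \bar y(x)) = \psi_0(x) = \phi(\hat y(x)) - c(x, \hat y(x))$ for $x \in \bar U$.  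For each fixed $y \in \bar V$ (say $y = \bar y(x_1)$ with $x_1 \in \bar U$), the required inequality $\phi(y) - c(x, y) \le \psi_0(x)$ for all $x \in \bar U$ is exactly the statement that $x \mapsto \psi_0(x) + c(x, y)$ is minimized at $x = x_1$; its Hessian in $x$ equals $\Lambda I + \frac{\partial^2 c}{\partial x^2}(x, y)$, which is uniformly positive definite on $\bar U \times \bar V$ for $\Lambda$ sufficiently large.  Hence $x = x_1$ is a strict global minimum over $\bar U$, showing $\bar y(x)$ is the unique local maximizer of $y \mapsto \phi(y) - c(x, \cdot)$ in $\bar V$ for every $x \in \bar U$; the same argument works in $\hat V$.

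Finally, to globalize, extend $\phi$ to all of $N$ by $\phi(y) \equiv -R$ on $N \setminus (\bar V \cup \hat V)$ with $R > 0$ large enough that $\phi(y) - c(x, y) < \psi_0(x)$ whenever $x \in \bar U$ and $y \notin \bar V \cup \hat V$.  Define $\psi(x) := \max_{y \in N}[\phi(y) - c(x, y)]$ on $M$; then $\psi = \psi_0$ on $\bar U$ and \eqref{LEMpotentials1} holds by construction.  Replace $\phi$ by its $c$-transform $\phi(y) := \min_{x \in M}[\psi(x) + c(x, y)]$ so that \eqref{LEMpotentials2} holds as well; this operation only increases $\phi$, preserves $\phi(y) \le \psi(x) + c(x, y)$ pointwise with equality at the points $(x, \bar y(x))$ and $(x, \hat y(x))$ for $x \in \bar U$, and therefore preserves \eqref{LEMpotentials3}.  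The main technical obstacle is the simultaneous second-order check in the middle step, which is overcome by choosing $\Lambda$ large enough to dominate both $-\frac{\partial^2 c}{\partial x^2}(\bar x, \bar y)$ and $-\frac{\partial^2 c}{\partial x^2}(\bar x, \hat y)$ in the positive-definite sense.
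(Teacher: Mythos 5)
Your proposal is correct and follows essentially the same route as the paper: prescribe a local potential at $\bar x$ with gradient $-F(\bar y)$ and a sufficiently positive Hessian (the paper uses a general symmetric $Q$ with $Q+\frac{\partial^2 c}{\partial x^2}>0$ where you take $Q=\Lambda I$), apply the implicit function theorem at $(\bar x,\bar y)$ and $(\bar x,\hat y)$, define $\phi$ by the pushforward formulas on $\bar V\cup\hat V$ and very negative elsewhere, and recover $\psi$ and $\phi$ by $c$-transforms. Your verification of the key strict inequality by fixing $y$ and noting that $x\mapsto\psi_0(x)+c(x,y)$ is uniformly strictly convex for large $\Lambda$ is just a transposed (and arguably cleaner) form of the paper's estimate \eqref{motiv5}.
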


\begin{proof}[Proof of Lemma \ref{LEMpotentials}]
Since we work locally in neighborhoods of $\bar{x}$, $\bar{y}$ and $\hat{y}$, taking charts, we may assume that we work in $\R^n$. For every symmetric $n\times n$ matrix $Q$, there is a function $f:M \rightarrow \R$ of class $C^2$ such that 
\begin{eqnarray}\label{motiv1}
d_{\bar{x}}f  = - \frac{\partial c}{\partial x} \bigl(\bar{x},\bar{y}\bigr) = - \frac{\partial c}{\partial x} \bigl(\bar{x},\hat{y}\bigr)
\end{eqnarray}
and
\begin{eqnarray}\label{motiv2}
\mbox{Hess}_{\bar{x}} f = Q.
\end{eqnarray}
Let $Q$ be fixed such that
\begin{eqnarray}\label{motiv3}
 Q + \frac{\partial^2 c}{\partial x^2} \bigl( \bar{x},\bar{y}\bigr), \, Q + \frac{\partial^2 c}{\partial x^2} \bigl( \bar{x},\hat{y}\bigr) >0,
\end{eqnarray}
we claim that there is a $c$-convex function $\psi:M \rightarrow \R$ which coincide with $f$ in an neighborhood of $f$ and which satisfies the required properties. Since both $\frac{\partial^2 c}{\partial x \partial y} (\bar{x},\bar{y})$ and $\frac{\partial^2 c}{\partial x \partial y} (\bar{x},\hat{y})$ are invertible and (\ref{motiv1}) holds with $\bar{y}\neq \hat{y}$, the Implicit Function Theorem yields a open neighborhood $\bar{U} \subset M$ of $\bar{x}$, 
two disjoint open neighborhoods $\bar{V}, \hat{V} \subset N$ of $\bar{y}, \hat{y}$ respectively, and  two functions of class $C^1$
 $$
 x \in \bar{U} \longmapsto \bar{y}(x) \in \bar{V}, \quad  x \in \bar{U} \longmapsto \hat{y}(x) \in \hat{V}
 $$
such that
\begin{eqnarray}\label{motiv4}
\left\{
\begin{array}{rcl}
\bar{y}(\bar{x}) &=& \bar{y}\\
 \hat{y}(x) & = & \hat{y}
 \end{array}
 \right. \quad \mbox{and} \quad \left\{
\begin{array}{rcl} 
\frac{\partial c}{\partial x} (x,\bar{y}(x)) & = & - d_xf \\
\frac{\partial c}{\partial x} (x,\hat{y}(x)) & = & - d_xf
\end{array}
\right.
\quad \forall x \in \bar{U}.
\end{eqnarray}
Taking one derivative at $\bar{x}$ in the latter yields
$$
 \frac{\partial^2 c}{\partial x^2} \bigl(\bar{x},\bar{y}\bigr) +   \frac{\partial^2 c}{\partial y \partial x} \bigl(\bar{x},\bar{y}\bigr) \frac{\partial \bar{y}}{\partial x} (\bar{x}) =  \frac{\partial^2 c}{\partial x^2} \bigl(\bar{x},\hat{y}\bigr) +   \frac{\partial^2 c}{\partial y \partial x} \bigl(\bar{x},\hat{y}\bigr) \frac{\partial \hat{y}}{\partial x} (\bar{x}) = - \mbox{Hess}_{\bar{x}} f,
$$
which can be written as 
$$
\left\{
\begin{array}{rcl}
 \frac{\partial \bar{y}}{\partial x} (\bar{x}) & = & -  \left(  \frac{\partial^2 c}{\partial y \partial x} \bigl(\bar{x},\bar{y}\bigr)\right)^{-1} \left[ \mbox{Hess}_{\bar{x}} f +  \frac{\partial^2 c}{\partial x^2} \bigl(\bar{x},\bar{y}\bigr) \right]\\
  \frac{\partial \hat{y}}{\partial x} (\bar{x}) & = & -  \left(  \frac{\partial^2 c}{\partial y \partial x} \bigl(\bar{x},\hat{y}\bigr)\right)^{-1} \left[ \mbox{Hess}_{\bar{x}} f +  \frac{\partial^2 c}{\partial x^2} \bigl(\bar{x},\hat{y}\bigr) \right].
  \end{array}
  \right.
$$
Therefore, by (\ref{motiv2})-(\ref{motiv3})  we infer that $\frac{\partial \bar{y}}{\partial x} (\bar{x})$ and $\frac{\partial \hat{y}}{\partial x} (\bar{x})$ are invertible. Then restricting $\bar{U}, \bar{V}, \hat{V}$ if necessary, we may assume that the mappings 
$$
 x \in \bar{U} \longmapsto \bar{y}(x) \in \bar{V}, \quad  x \in \bar{U} \longmapsto \hat{y}(x) \in \hat{V}
$$
are diffeomorphisms. Moreover, the functions of class $C^2$ given by
$$
\bar{G} \, : \, x \in \bar{U} \, \longmapsto  \, f(x) - f(\bar{x}) + c \left(x,\bar{y}(x)\right) -  c \left(\bar{x},\bar{y}(x)\right)
$$
and 
$$
\hat{G} \, : \, x \in \bar{U} \, \longmapsto  \, f(x) - f(\bar{x}) + c \left(x,\hat{y}(x)\right) -  c \left(\bar{x},\hat{y}(x)\right)
$$
satisfy (using (\ref{motiv2})-(\ref{motiv4}))
$$
\bar{G} (\bar{x}) = \hat{G} (\bar{x}) =0, \quad d_{\bar{x}} \bar{G} = d_{\bar{x}} \hat{G}=0, \quad \mbox{Hess}_{\bar{x}} \bar{G} =  \mbox{Hess}_{\bar{x}} \hat{G}  = -  \left[ Q + \frac{\partial^2 c}{\partial x^2} \bigl( \bar{x},\bar{y}\bigr)\right] <0,
$$
so  we may also assume that 
\begin{eqnarray}\label{motiv5}
\left\{
\begin{array}{rcl}
     f(x') - f(x) + c \left(x',\bar{y}(x')\right) -  c \left(x,\bar{y}(x')\right) & <& 0  \\
     f(x') - f(x) + c \left(x',\hat{y}(x')\right) -  c \left(x,\hat{y}(x')\right) & <& 0  
       \end{array}
       \right.
        \qquad \forall x ' \in \bar{U} \setminus \{x\}, \, \forall x \in \bar{U}. 
\end{eqnarray}
As a matter of fact, freezing $x$ in the first line of (\ref{motiv5}) and setting 
$$
\bar{G}_x (x') =  f(x') - f(x) + c \left(x',\bar{y}(x')\right) -  c \left(x,\bar{y}(x')\right) \qquad \forall x' \in \bar{U},
$$ 
we check that for every $x\in \bar{U}$, we have
$$
\bar{G}_x (x)  =0, \quad d_x \bar{G}_x =d_xf +  \frac{\partial c}{\partial x} \bigl(x,\bar{y}(x)\bigr) =0
$$
and for every $x' \in \bar{U}$
\begin{eqnarray*}
d_{x'} \bar{G}_x =   d_{x'}f +  \frac{\partial c}{\partial x} \bigl(x',\bar{y}(x')\bigr) + \left[ \frac{\partial c}{\partial y} \bigl(x',\bar{y}(x')\bigr) -  \frac{\partial c}{\partial y} \bigl(x,\bar{y}(x')\bigr) \right]  \frac{\partial \bar{y}}{\partial x}(x') 
\end{eqnarray*}
which implies
\begin{eqnarray*}
\mbox{Hess}_{x} \bar{G}_x & = &  \mbox{Hess}_{x} f +  \frac{\partial^2 c}{\partial x^2} \bigl(x,\bar{y}(x)\bigr) +  \frac{\partial^2 c}{\partial y\partial x} \bigl(x,\bar{y}(x)\bigr) \frac{\partial \bar{y}}{\partial x}(x)  +  \frac{\partial^2 c}{\partial x \partial y} \bigl(x,\bar{y}(x)\bigr)  \frac{\partial \bar{y}}{\partial x}(x) 
\end{eqnarray*}

Define the functions $\phi_0 : N \rightarrow \R$ and $\psi : M \rightarrow \R$ by
$$
\phi_0(y) := \left\{ \begin{array}{ccl}
f\left( \bar{y}^{-1}(y)\right) + c\left( \bar{y}^{-1}(y),y\right) & \mbox{ if }  & y \in \bar{V}\\
f\left( \hat{y}^{-1}(y)\right) + c\left( \hat{y}^{-1}(y),y\right) & \mbox{ if }  & y \in \hat{V}\\
-\infty & \mbox{ if }  & y \notin \bar{V} \cup \hat{V}
\end{array}
\right. 
\qquad \forall y \in N
$$
 and 
 $$
\psi(x) = \max_{y\in N} \Bigl\{ \phi_0(y) - c(x,y)\Bigr\} \qquad \forall x \in M.
$$
We observe that we have for every $x\in  \bar{U}$,
$$
\phi_0\bigl(\bar{y}(x)\bigr) - c\bigl(x,\bar{y}(x)\bigr) = \phi_0\bigl(\hat{y}(x)\bigr) - c\bigl(x,\hat{y}(x)\bigr).
$$
Then we have 
 $$
\psi(x) = \max_{x'\in \bar{U}} \Bigl\{ \phi_0\bigl(\bar{y}(x')\bigr) - c\bigl(x',\bar{y}(x')\bigr)\Bigr\} = \max_{x'\in \bar{U}} \Bigl\{ \phi_0\bigl(\hat{y}(x')\bigr) - c\bigl(x',\hat{y}(x')\bigr)\Bigr\} \qquad \forall x \in M.
$$
By the above construction and (\ref{motiv5}), we have for every $x\in \bar{U}$ and any $x' \in \bar{U}\setminus \{x\}$,
\begin{eqnarray*}
\phi_0 \bigl(\bar{y}(x)\bigr) -  c\bigl(x,\bar{y}(x)\bigr) = f(x) > f(x') +  c \bigl(x',\bar{y}(x')\bigr) -  c \bigl(x,\bar{y}(x')\bigr) = \phi_0\bigl(\bar{y}(x')\bigr) -  c\bigl(x,\bar{y}(x')\bigr).
\end{eqnarray*} 
We infer that 
$$
\psi(x) = \phi_0\bigl(\bar{y}(x)\bigr) - c\bigl(x,\bar{y}(x)\bigr) = \phi_0\bigl(\hat{y}(x)\bigr) - c\bigl(x,\hat{y}(x)\bigr) = f(x) \qquad \forall x \in \bar{U}.
$$
Setting 
$$
\phi(y) = \min \Bigl\{ \psi(x)+c(x,y) \, \vert \, x \in M\Bigr\} \qquad \forall y \in N,
$$
we check that (\ref{LEMpotentials1})-(\ref{LEMpotentials3}) are satisfied.
$$
\psi(x) = \max_{y\in N} \Bigl\{ \phi(y) - c(x,y)\Bigr\} \qquad \forall x \in M.
$$
\end{proof}

Returning to the proof of the second case, let us consider an absolutely continuous probability measure $\mu$ on $M$ whose support is contained in $\bar{U}$. Then define the nonnegative measures $\bar{\nu}, \hat{\nu}$ on $N$ by
$$
\bar{\nu} := \frac{1}{2} \bar{y}_{\sharp}\mu \quad \mbox{ and } \quad \hat{\nu} := \frac{1}{2} \hat{y}_{\sharp}\mu,
$$
and set 
$$
\nu := \bar{\nu} + \hat{\nu}.
$$
Since the functions $\bar{y}$ and $\hat{y}$ are diffeomorphism, $\nu$ is an absolutely continuous probability measure on $N$ whose support in contained in $\bar{V}\cup \hat{V}$. Moreover, the plan $\bar{\gamma}$ defined by 
$$
\bar{\gamma} := \frac{1}{2} \left(Id,\bar{y}\right)_{\sharp} \mu +   \frac{1}{2} \left(Id,\hat{y}\right)_{\sharp} \mu
$$
has marginals $\mu$ and $\nu$ and is concentrated on the set of $(x,y)\in M\times N$ with $x\in \bar{U}$ and $y \in \partial_c\psi(x) \cap (\bar{V}\cup \hat{V})$. By (\ref{LEMpotentials1})-(\ref{LEMpotentials2}), any plan $\gamma$ with marginals  $\mu$ and $\nu$ satisfies
\begin{eqnarray*}
\int_{M\times N} c(x,y) \, d\gamma (x,y) & \geq & \int_{M \times N}\left[ \phi(y)-\psi(x)\right]  \, d\gamma (x,y) \\
& = & \int_N \phi(y) \, d\nu(y) - \int_M \psi(x) \, d\mu(x) \\
& = & \int_{\bar{V}\cup \hat{V}} \phi(y) \, d\nu(y) - \int_{\bar{U}} \psi(x) \, d\mu(x) \\
& = & \int_{M\times N} c(x,y) \, d\bar{\gamma} (x,y),
\end{eqnarray*}
with equality in the first inequality if and only if $\gamma$ is concentrated on the  set of $(x,y)\in M\times N$ with $x\in \bar{U}$ and $y \in \partial_c\psi(x) \cap (\bar{V}\cup \hat{V})$. This shows that $\bar{\gamma}$ is the unique optimal plan with marginals $\mu$ and $\nu$. 

It remains to show that the set of costs satisfying (\ref{ASSmotivation}) is open and dense in $C^2(M\times N;\R)$. The openness is obvious. Let us prove the density. Let $c$ be fixed in $C^2(M\times N;\R)$ such that  (\ref{ASSmotivation}) is not satisfied. Let $\bar{r}\in \{0,\ldots, n-1\}$ be the maximum of the rank of $\frac{\partial^2 c}{\partial y\partial x} (x,y)$ 
for $(x,y) \in M\times N$, pick some $(\bar{x},\bar{y}) \in M\times N$ such that 
$$
\mbox{rank} \left( \frac{\partial^2 c}{\partial y\partial x} (x,y)\right)=\bar{r}.
$$
Since the rank mapping is lower semicontinuous, there are two open sets $U\subset M$ and $V\subset N$ such that the rank of  $\frac{\partial^2 c}{\partial y\partial x} (x,y)$ is equal to $\bar{r}$ for any  $(\bar{x},\bar{y}) \in U\times V$. Moreover restricting $U$ and $V$ if necessary and taking local charts, we may assume that we work in $\R^n$. Let  $X : V \rightarrow \R^n$ be the mapping defined by $X(y)= \frac{\partial c}{\partial x} (\bar{x},y)$, for any $y\in V$. Doing a change of coordinates in $x$ and $y$ if necessary, we may assume 
that the $\bar{r}\times \bar{r}$ matrix 
$$
G = \left( \frac{\partial X_i  }{\partial y_j} (\bar{y})\right)_{1\leq i,j \leq \bar{r}}
$$
is invertible. Define the mapping $G :V \rightarrow \R^n$ by
$$
G (y_1, \ldots, y_n) = \left(X(y)_1, \ldots, X(y)_{\bar{r}}, y_{\bar{r}+1}, \ldots, y_n\right) \qquad \forall y \in V.
$$
The function $G$ is of class $C^1$  and by construction the differential of $G$ at $\bar{y}$ is invertible. Then $G$ is a local diffeomorphism from a open neighborhood $V' \subset V$ of $\bar{y}$ onto an open neighborhood $Z$ of $\bar{z}:=G(\bar{y})$. The function $X$ in $z$ coordinates is given by 
$$
\tilde{X}(z) := X_x \left( G^{-1}(z)\right) \qquad \forall z \in Z.
$$
By construction, we have
$$
\tilde{X}(z)_i = z_i \qquad \forall i =1, \ldots, \bar{r}, \, \forall z \in Z.
$$
Therefore, since $\tilde{X}$ has rank $\bar{r}$, the coordinates $\left( \tilde{X}_{\bar{r}+1}, \ldots, \tilde{X}_{n}\right)$ do not depend upon the variables $z_{\bar{r}+1},\ldots, z_n$. Let $\delta : \R^n \times \R^n\rightarrow \R$ be the smooth function defined by 
$$
\delta (x,z) = \sum_{i={\bar{r}+1}}^n x_i z_i \qquad \forall x,z \in \R^n
$$
and let $\varphi : \R^n \rightarrow [0,1]$ be a cut-off function which is equal to $1$ in a neighborhood of $G(\bar{y})$ and $0$ outside $Z$. Then for every $\epsilon >0$ the function 
$$
\tilde{c} \, : \, (x,z) \longmapsto \, c\left(x,G^{-1}(z)\right) + \epsilon \varphi (z) \delta (x,z)
$$
has a mixed partial derivative which is invertible at $(\bar{x},\bar{z})$ and tends to $c$ (in $(x,z)$ coordinates) in $C^2$ topology as $\epsilon>0$ goes to zero.

\section{Generic costs in smooth topology}\label{SECproofTHM2}

The proof of Theorem \ref{THM2} follows by classical transversality arguments. We refer the reader to \cite{gg73} for further details on the results from Thom transversality theory that we use below. 

Recall that $\dim M = \dim N=n$.
Denote by $J^2(M\times N;\R)$ the smooth manifold of $2$-jets from $M\times N$ to $\R$ and denote by $V$ the set consisting of $2$-jets $
\left( (x,y), \lambda, p, H\right) $ where $H$ is a 
symmetric matrix consisting of four $n \times n$ blocks 
$$
H = \left[ \begin{matrix} H_1 & H_2 \\ H_3 & H_4 \end{matrix} \right],
$$
with $H_2$ of corank $\geq 1$. The set $V$ is closed and stratified by the smooth submanifolds
$$
V_r := \Bigl\{ \left( (x,y), \lambda, p, H\right)  \, \vert \, \mbox{rank}(H_2)=r\Bigr\} \qquad \forall r=0, \ldots, n-1,
$$
of codimension $\geq 1$. By the Thom Transversality Theorem (see \cite[Theorem 4.9 p. 54]{gg73}), the set $\mathcal C_1$ of costs $c\in C^{\infty}(M\times N;\R)$ such that $j^2c(M\times N)$ is transverse to $V$ is residual. For these costs the set $\Sigma :=(j^2c)^{-1}(V) \subset M \times N$ is stratified of codimension $\geq 1$ and it is nonempty. 
As a matter of fact, for every $x\in M$, the mapping $\frac{\partial c}{\partial x} (x,\cdot) : N \rightarrow T_x^*M$ is smooth and its image $\mathcal{I}$ is a compact subset of $T_x^*M$. Thus for every boundary point $p\in \partial \mathcal{I}$, the function $\frac{\partial c}{\partial x} (x,\cdot)$ cannot be a local diffeomorphism in a neighborhood of any $y\in N$ such that $\frac{\partial c}{\partial x} (x,y)=p$, which shows that for such $y$ the linear mapping $\frac{\partial^2 c}{\partial y\partial x} (x,y)$ cannot be invertible. This shows that $\Sigma$ is not empty. The fact that $\Sigma$ is stratified of codimension $\geq 1$ (and so of zero measure) comes from the fact that it is the inverse image by $j^2c : M\times N \rightarrow J^2(M\times N; \R)$ of $V$ which is transverse to $j^2(M\times N)$  (see \cite[Theorem 4.4 p. 52]{gg73}).

Using a similar argument, we next show that the set of costs without periodic chains is residual in $ C^{\infty}(M\times N;\R)$.

\begin{lemma}[Cyclic chains yield optimal alternatives]
\label{LEMperiodic}
Let
$$
 \Bigl( (x_1,y_1), \ldots (x_L,y_L)\Bigr) \in \left( M\times N\right)^L
$$
be a 
chain with $x_2=x_1, x_L\neq x_1,$ and $y_L=y_1$. Then $L=2K$ for some integer $K\geq 2$ and
$$
\sum_{k=0}^{K-1} c\bigl( x_{2k+1},y_{2k+1}\bigr) = \sum_{k=0}^{K-1} c\bigl( x_{2k+1},y_{2k+2}\bigr). 
$$
\end{lemma}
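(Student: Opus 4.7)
The argument has three phases: extract the strict alternation of the chain, establish the parity $L=2K$, then deduce the sum identity by applying $c$-cyclic monotonicity to two subsets of the chain in opposite rotational directions.

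First I would exploit the hypothesis $x_2 = x_1$ to identify step $1$ as of type (altc1), i.e.\ a ``vertical'' step. The embedded $\min$-clauses in the chain definition —  $y_{l+1} = y_{\min\{L,l+2\}}$ in (altc1) and $x_{l+1} = x_{\min\{L,l+2\}}$ in (altc2) — propagate rigid alternation, so that step $l$ is (altc1) iff $l$ is odd and (altc2) iff $l$ is even, for every $l \in \{1,\dots,L-1\}$. In particular $x_{2k-1}=x_{2k}$ whenever $2k \le L$, and $y_{2k}=y_{2k+1}$ whenever $2k+1\le L$.

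Second I would establish that $L$ is even. Suppose $L = 2K+1$. Then step $L-1$ is horizontal, giving $y_{L-1}=y_L=y_1$, so the terminal point is a ``horizontal end.'' Combined with the alternation, this means the chain's last vertical edge already produced the value $y_1$ at position $L-1$, and the final horizontal edge merely changes $x$ while fixing the $y$-coordinate at $y_1$. Using $x_L \ne x_1$ together with the other chain conditions and the $c$-cyclic monotonicity of the entire set, one reaches a contradiction (alternatively, one shows the terminal horizontal edge can be absorbed, so any truly ``closed'' chain satisfying the three end conditions has even length). Hence $L=2K$. The bound $K\ge 2$ is immediate, since $L=2$ would force $y_2 = y_L = y_1$, violating step $1$.

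Third, set $a_k := x_{2k-1}$ and $b_k := y_{2k-1}$ for $k=1,\dots,K$, together with the cyclic identification $b_{K+1}:=y_{L}=y_1=b_1$. The alternation then identifies the chain with the sequence
\[
(a_1,b_1),\ (a_1,b_2),\ (a_2,b_2),\ (a_2,b_3),\ \dots,\ (a_K,b_K),\ (a_K,b_{K+1}),
\]
and the desired equality becomes $\sum_{k=1}^{K} c(a_k,b_k) = \sum_{k=1}^{K} c(a_k,b_{k+1})$. The two subsets $D:=\{(a_k,b_k)\}_{k=1}^K$ and $D':=\{(a_k,b_{k+1})\}_{k=1}^K$ of the chain inherit $c$-cyclic monotonicity. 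Applying cyclic monotonicity to $D$ with the shift $a_k\mapsto a_{k-1}$ gives $\sum_k c(a_k,b_{k+1}) \ge \sum_k c(a_k,b_k)$ after reindexing, while applying it to $D'$ with the shift $a_k\mapsto a_{k+1}$ gives $\sum_k c(a_k,b_k) \ge \sum_k c(a_k,b_{k+1})$. The opposite inequalities force equality, completing the proof.

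The main obstacle is the parity step: the alternation and $y_L=y_1$ alone are compatible with odd $L$, and one must carefully invoke $x_L\ne x_1$ (together with the $c$-cyclic monotonicity of the full chain) to exclude that case. Once parity is settled, the remainder of the argument is a symmetric pair of one-line applications of cyclic monotonicity.
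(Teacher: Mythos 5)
Your Phase 3 is correct and is essentially the paper's own argument: the paper proves the identity by one long chain of inequalities and equalities around the loop, which amounts exactly to applying $c$-cyclic monotonicity once to the ``diagonal'' $\{(x_{2k+1},y_{2k+1})\}_k$ (your $D$) and once, in the opposite rotational direction, to $\{(x_{2k+2},y_{2k+2})\}_k$ (your $D'$), then using the alternation relations $x_{2k+2}=x_{2k+1}$, $y_{2k+3}=y_{2k+2}$ to identify the two sums. Your Phase 1 (strict alternation forced by the embedded $\min$-clauses) is also what the paper implicitly uses. The observation that $L=2$ is impossible, giving $K\ge 2$, is correct.

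The one place where you and the paper diverge is the parity step, and there your writeup has a real gap: you call it ``the main obstacle'' but your argument for it is only the assertion that ``one reaches a contradiction.'' In fact, from the three stated end conditions alone ($x_2=x_1$, $x_L\ne x_1$, $y_L=y_1$) plus alternation, odd $L$ is \emph{not} obviously excluded: for $L=5$ the configuration $(x_1,a),(x_1,b),(x_3,b),(x_3,a),(x_5,a)$ with $a\ne b$ and $x_5\notin\{x_1,x_3\}$ satisfies every combinatorial requirement of Definition~\ref{Laltchain}, and it is not clear that $c$-cyclic monotonicity and the derivative conditions rule it out for an arbitrary cost. So the contradiction you gesture at may not exist. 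The paper avoids this entirely: evenness of $L$ is built into the notion of \emph{cyclic} chain (projections onto $M$ and $N$ each consisting of $L/2$ distinct points), and the proof of Lemma~\ref{LEMperiodic} simply writes $L=2K$ without argument. The clean repair is to invoke that definitional fact (or, equivalently, to add the distinct-projections hypothesis under which the lemma is actually applied in the proof of Theorem~\ref{THM2}), rather than to try to derive parity from $x_L\ne x_1$ and monotonicity.
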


\begin{proof}[Proof of Lemma \ref{LEMperiodic}]
We have for any $k\in \{0,K-1\}$,
$$
x_{2k+2} = x_{2k+1} \quad \mbox{ and } \quad y_{2k+3} = y_{2k+2}.
$$
Then, since the set $\left\{   (x_1,y_1), \ldots (x_L,y_L) \right\}$ is $c$-cyclically monotone, we have 
\begin{eqnarray*}
\sum_{k=0}^{K-1} c\bigl( x_{2k+1},y_{2k+1}\bigr)  & \leq & \sum_{k=0}^{K-1} c\bigl( x_{2k+1},y_{2k+3}\bigr) \\
& = &  \sum_{k=0}^{K-1} c\bigl( x_{2k+1},y_{2k+2}\bigr) \\
& = &  \sum_{k=0}^{K-1} c\bigl( x_{2k+2},y_{2k+2}\bigr) \\
& \leq &  c\bigl( x_{2},y_{2K}\bigr) +  \sum_{k=1}^{K-1} c\bigl( x_{2k+2},y_{2k}\bigr) \\
& = & \sum_{k=0}^{K-1} c\bigl( x_{2k+1},y_{2k+1}\bigr).
\end{eqnarray*}
We conclude easily.
\end{proof}
We need now to work with $1$-multijets of smooth functions from $M\times N$ to $\R$. For every even integer $L=2K\geq 4$, we denote by $W_L$ the set of tuples 
$$
\left( \Bigl( (x_1,y_1), \ldots (x_L,y_L)\Bigr), \Bigl(\lambda_1, \ldots, \lambda_L\Bigr),  \Bigl( (p_1^x,p_1^y), \ldots (p_L^x,p_L^y)\Bigr) \right)
$$
satisfying 
$$
(x_i,y_i) \neq (x_j,y_j) \qquad \forall i\neq j \in \{1,\ldots, L\},
$$
$$
\left\{
\begin{array}{rcl}
x_{2k+2} & = & x_{2k+1} \\ 
y_{2k+3} & = & y_{2k+2},
\end{array}
\right.
\qquad
\left\{
\begin{array}{rcl}
p_{2k+2}^x & = & p_{2k+1}^x \\ 
p_{2k+3}^y & = & p_{2k+2}^y,
\end{array}
\right.
$$
for all $k\in \{0,K-1\}$ and
$$
\sum_{k=0}^{K-1} \lambda_{2k+1} = \sum_{k=0}^{K-1} \lambda_{2k+2}. 
$$
The set $W_L$ is a submanifold of $J^1_L(M\times N;\R)$ of dimension
$$
D= 4Kn+L-1= (2n+1)L-1
$$
and $J^1_L(M\times N;\R)$ has dimension $(4n+1)L$. Thus $W_L$ has codimension $2nL+1$.


By the Multijet Transversality Theorem  (see \cite[Theorem 4.13 p. 57]{gg73}), for each $K=2,3\ldots$,  the set $\mathcal C_K$ of costs $c$ for which $j^1_{2K} c$ is transverse to $W_{2K}$ 
is residual.  The intersection 
$$
\mathcal C = \mathcal{C}_1 \cap \left(\cap_{K=2}^\infty \mathcal C_{K}\right) 
$$
satisfies the conclusions of Theorem \ref{THM2}.

\appendix
\section{Generic uniqueness of optimal plans for fixed marginals}\label{SECproofTHM3}

Elaborating on a celebrated result by Ma\~n\'e \cite{mane96} in the framework of Aubry-Mather theory, it is possible
to prove that for fixed marginals the set of costs for which uniqueness of optimal transport plans holds is generic. 
Such a result was first obtained by Levin \cite{Levin08}.  We include an argument here for comparison.

Let $M$ and $N$ be smooth closed manifolds (meaning compact, without boundary) of dimension $n\geq 1$, $c:M\times N \rightarrow \R$ be a cost function in $C^k(M\times N;\R)$ with $k\in \N \cup \{\infty\}$, and $\mu, \nu$ be two Borel probability mesures, we recall that $\Pi(\mu,\nu)$ denotes the set of probability measures in $M\times N$ with first and second marginals $\mu$ and $\nu$. By the way, a measure on $M\times N$ is a continuous linear 
functional on the set of continuous functions $C^0(M\times N;\R)$ and the set $E=C^0(M\times N;\R)^*$ of such measures is equipped with the topology of weak-$*$ convergence saying that some sequence $(\pi_l)_l$ in $E$  converges to $\pi\in E$ if and only if 
$$
\lim_{l \to \infty}
\int_{M \times N} f \, d\pi_l 
=\int_{M \times N} f \, d\pi,
$$
for every $f\in C^0(M\times N;\R)$. The following is classical.

\begin{lemma}\label{compactness}
The set $\Pi(\mu,\nu)$ is a nonempty compact convex set in $E$.
\end{lemma}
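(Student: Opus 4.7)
The plan is to verify the three properties in turn: nonemptiness, convexity, and compactness in the weak-$*$ topology, with the latter being the only step that requires any real argument.

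For nonemptiness, the product measure $\mu \otimes \nu$ lies in $\Pi(\mu,\nu)$, since for any continuous $f$ on $M$, viewed as a function on $M\times N$ independent of the second variable, Fubini gives $\int_{M\times N} f(x)\, d(\mu\otimes\nu)(x,y) = \int_M f\, d\mu$, and symmetrically for the second marginal. Convexity is equally direct: if $\gamma_1,\gamma_2 \in \Pi(\mu,\nu)$ and $t\in[0,1]$, the linear combination $t\gamma_1 + (1-t)\gamma_2$ is a nonnegative measure of total mass one whose projections onto $M$ and $N$ are $t\mu + (1-t)\mu = \mu$ and $t\nu + (1-t)\nu = \nu$.

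For compactness, the plan is to realize $\Pi(\mu,\nu)$ as a closed subset of the set $\mathcal P(M\times N)$ of Borel probability measures on the compact space $M\times N$, which is itself weak-$*$ compact by the Banach–Alaoglu theorem (probability measures form a weak-$*$ closed subset of the unit ball of $C^0(M\times N;\R)^*$, characterized by positivity and mass one, both of which are closed conditions tested against continuous functions). To verify closedness of $\Pi(\mu,\nu)$ inside $\mathcal P(M\times N)$, suppose $\gamma_l \to \gamma$ weakly-$*$ with each $\gamma_l \in \Pi(\mu,\nu)$. For any $f\in C^0(M;\R)$, the function $(x,y)\mapsto f(x)$ is continuous on $M\times N$, so
\[
\int_M f\, d(\pi^M_\sharp \gamma) = \int_{M\times N} f(x)\, d\gamma(x,y) = \lim_{l\to\infty} \int_{M\times N} f(x)\, d\gamma_l(x,y) = \int_M f\, d\mu,
\]
and since this holds for all $f\in C^0(M;\R)$, the Riesz representation theorem gives $\pi^M_\sharp \gamma = \mu$. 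The same argument with $g\in C^0(N;\R)$ yields $\pi^N_\sharp \gamma = \nu$, so $\gamma \in \Pi(\mu,\nu)$.

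No step is genuinely hard; the only subtle point is to recall that one tests the marginal condition against continuous functions that are continuous on the whole product (as cylinders), which is automatic since $M$ and $N$ are compact and the coordinate projections are continuous. Combining the three bullets finishes the lemma.
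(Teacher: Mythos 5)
Your proof is correct and is exactly the standard argument: product measure for nonemptiness, linearity of the marginal constraints for convexity, and Banach--Alaoglu plus weak-$*$ closedness of the marginal conditions (tested against continuous cylinder functions) for compactness. The paper itself gives no proof, dismissing the lemma as classical, and your argument is precisely the classical one it has in mind.
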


The following will also be useful. We refer the reader to \cite{gg73} for the definition of the $C^k$-topology.

\begin{lemma}\label{LEMuseful}
The mapping 
$$
(\pi,c) \in E \times C^k(M\times N;\R) \, \longmapsto \, \int_{M\times N} c \, d\pi
$$
is continuous with respect to the weak-$*$ topology on $E$ and the $C^k$-topology on $C^k(M\times N;\R)$. Moreover, for every $\pi_1, \pi_2 \in \Pi(\mu,\nu)$ with $\pi_1 \neq \pi_2$, there is 
$c\in C^k(M\times N;\R)$ such that
$$
\int_{M \times N} f \, d\pi_1 \neq \int_{M \times N} f \, d\pi_2.
$$  
\end{lemma}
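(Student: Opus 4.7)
\textbf{Proof plan for Lemma \ref{LEMuseful}.} I read the second assertion as having the evident typo corrected: the conclusion should read $\int c\, d\pi_1 \ne \int c\, d\pi_2$. My plan is to prove the two parts separately; both should be routine once the right classical tools are invoked.

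For continuity, suppose $(\pi_l,c_l) \to (\pi,c)$ with $\pi_l\to\pi$ weakly-$*$ in $E$ and $c_l\to c$ in $C^k(M\times N;\R)$. I would split
\[
\int_{M\times N} c_l\, d\pi_l - \int_{M\times N} c\, d\pi
= \int_{M\times N} (c_l-c)\, d\pi_l
+ \left(\int_{M\times N} c\, d\pi_l - \int_{M\times N} c\, d\pi\right).
\]
The second term tends to $0$ by the very definition of weak-$*$ convergence, since $c$ is a fixed continuous test function. For the first term, I would combine two observations: (i) $C^k$-convergence on the compact manifold $M\times N$ implies $\|c_l-c\|_{C^0}\to 0$ (the $C^0$-seminorm is dominated by any $C^k$-seminorm for $k\ge 0$); and (ii) any weak-$*$ convergent sequence in the dual Banach space $E=C^0(M\times N;\R)^*$ is norm-bounded by Banach--Steinhaus, so $\sup_l \|\pi_l\|_E <\infty$. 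Hence $|\int(c_l-c)\,d\pi_l|\le \|c_l-c\|_{C^0}\cdot \sup_l\|\pi_l\|_E \to 0$.

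For the separation statement, suppose $\pi_1 \ne \pi_2$ in $E$. Because $E=C^0(M\times N;\R)^*$ separates points, there exists a continuous test function $f\in C^0(M\times N;\R)$ with
$$
\delta := \int_{M\times N} f\, d\pi_1 - \int_{M\times N} f\, d\pi_2 \ne 0.
$$
Next I would use density of $C^k(M\times N;\R)$ in $C^0(M\times N;\R)$ (with the uniform topology), which follows from Stone--Weierstrass or alternatively from convolution/mollification in coordinate charts patched by a smooth partition of unity: pick $c \in C^k(M\times N;\R)$ with $\|c-f\|_{C^0}<|\delta|/(4\max(\|\pi_1\|_E,\|\pi_2\|_E))$. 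Then $|\int(c-f)\,d\pi_i|<|\delta|/4$ for $i=1,2$, and the triangle inequality gives $|\int c\,d\pi_1 - \int c\,d\pi_2|\ge |\delta|/2>0$, as required.

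The main (very mild) obstacle is simply to notice that $\Pi(\mu,\nu)$ plays no role in either assertion beyond supplying specific $\pi_i$'s in part two, and that the only nontrivial inputs are Banach--Steinhaus for part one and the density of $C^k$ in $C^0$ on a compact manifold for part two. Both are standard; everything else reduces to the triangle inequality.
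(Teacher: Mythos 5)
Your proof is correct, and in fact the paper offers no proof of this lemma at all --- it is stated as a classical fact --- so your argument fills that gap with exactly the standard reasoning one would expect: the splitting $\int c_l\,d\pi_l-\int c\,d\pi=\int(c_l-c)\,d\pi_l+(\int c\,d\pi_l-\int c\,d\pi)$ together with Banach--Steinhaus for the first part, and point-separation by $C^0$ plus density of $C^k$ in $C^0$ for the second. One pedantic caveat: weak-$*$ convergent \emph{nets} in $E$ need not be norm-bounded, so your argument establishes sequential continuity rather than continuity on all of $E\times C^k(M\times N;\R)$; but since the lemma is only ever applied on $K=\Pi(\mu,\nu)$, where every element has total variation norm $1$ and the weak-$*$ topology is metrizable, this is harmless, and it would be worth one sentence in a final write-up to note that the restriction to $K$ (or to bounded sets) is where the statement is actually used. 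You are also right that the displayed conclusion of the second assertion contains a typo ($f$ should be $c$).
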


For every $c\in C^k (M\times N;\R)$ let $\mathcal{M}(c)$ be the set of optimal transport plans between $\mu$ and $\nu$, that is
$$
\mathcal{M}(c) := \left\{ \pi \in \Pi(\mu,\nu) \, \vert \, \int_{M\times N} f \, d\pi \leq \int_{M\times N} f \, d\pi', \, \forall \pi' \in \Pi(\mu,\nu)\right\}.
$$
By construction, $\mathcal{M}(c)$ is a nonempty compact convex subset of $\Pi(\mu,\nu)$.

\begin{theorem}[Levin]
\label{THMmane}
There exists a residual set $\mathcal{C} \subset C^k(M\times N;\R)$ such that for every $c\in \mathcal{C}$, the set $\mathcal{M}(c)$ is a singleton.
\end{theorem}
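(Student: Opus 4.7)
My plan is to realize $\mathcal{C}$ as a countable intersection of open dense sets $V_{j,n}$ indexed by a countable separating family and a precision scale. Fix a sequence $(h_j)_{j\in\N}$ in $C^k(M\times N;\R)$ which is dense in $C^0(M\times N;\R)$ (possible because $C^k$ is dense in $C^0$), and for each $j\in\N$, $n\in\N^*$ set
$$
V_{j,n} := \left\{ c \in C^k(M\times N;\R) \ \bigg| \ \sup_{\pi_1,\pi_2 \in \mathcal{M}(c)} \left| \int_{M\times N} h_j\, d\pi_1 - \int_{M\times N} h_j\, d\pi_2 \right| < \frac{1}{n} \right\}.
$$
Taking $\mathcal{C} := \bigcap_{j,n} V_{j,n}$, any $c\in\mathcal{C}$ forces $\int h_j\, d(\pi_1-\pi_2)=0$ for every $j$ and every pair $\pi_1,\pi_2 \in \mathcal{M}(c)$, whence $\pi_1=\pi_2$ by density of $(h_j)$ in $C^0$ and Lemma \ref{LEMuseful}. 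It therefore suffices to prove that each $V_{j,n}$ is open and dense in $C^k(M\times N;\R)$, which is completely metrizable, so that Baire's theorem closes the argument.

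Openness follows from upper semi-continuity of the multifunction $c\mapsto \mathcal{M}(c)$. If $c_m\to c$ in $C^k$ and $\pi_m\in \mathcal{M}(c_m)$, Lemma \ref{compactness} yields a weak-$*$ cluster point $\pi\in\Pi(\mu,\nu)$; Lemma \ref{LEMuseful} then gives $\int c\,d\pi = \lim_m \int c_m\, d\pi_m \le \lim_m \int c_m\, d\pi' = \int c\, d\pi'$ for every $\pi'\in\Pi(\mu,\nu)$, so $\pi\in\mathcal{M}(c)$. A routine subsequence extraction then shows the diameter functional $D_j(c) := \sup_{\pi_1,\pi_2\in\mathcal{M}(c)} |\int h_j\, d(\pi_1-\pi_2)|$ is upper semi-continuous, so $V_{j,n}=\{D_j<1/n\}$ is open.

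Density is where Mañé's concavity trick enters. Introduce the value function $\alpha : C^k(M\times N;\R)\to\R$ defined by $\alpha(c):=\min_{\pi\in\Pi(\mu,\nu)}\int c\, d\pi$. As the infimum of the affine family $c\mapsto \int c\, d\pi$, $\alpha$ is concave and continuous. Given $c_0\in C^k$ and $\delta>0$, consider the one-variable concave function $\beta(t):=\alpha(c_0+th_j)$. For every $\pi_t\in\mathcal{M}(c_0+th_j)$ and every $s\in\R$,
$$
\beta(s) \leq \int (c_0+sh_j)\, d\pi_t = \beta(t) + (s-t)\int h_j\, d\pi_t,
$$
so $\int h_j\, d\pi_t$ is a supergradient of $\beta$ at $t$. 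Real-valued concave functions are differentiable off a countable set, so we may pick $t$ arbitrarily close to $0$ with $|t|\,\|h_j\|_{C^k}<\delta$ at which $\beta'(t)$ exists; then $\int h_j\, d\pi$ is independent of $\pi\in\mathcal{M}(c_0+th_j)$, i.e.\ $D_j(c_0+th_j)=0$ and $c_0+th_j\in V_{j,n}$.

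The main obstacle is the density step, and specifically the envelope-type identification of every supergradient of $\beta$ with an integral $\int h_j\, d\pi$ for some $\pi\in\mathcal{M}(c_0+th_j)$; everything else amounts to bookkeeping around weak-$*$ compactness and Baire category. Once openness and density of the $V_{j,n}$ are in hand, Theorem \ref{THMmane} is immediate.
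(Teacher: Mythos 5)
Your proof is correct, and while it rests on the same Ma\~n\'e--Bernard--Contreras idea as the paper (perturb the cost linearly and exploit differentiability of the concave value function to collapse the spread of the corresponding linear functional on the minimizing facet), the architecture is genuinely different. The paper's residual set is $\bigcap_l \{c : \mathrm{diam}(\mathcal M(c))<1/l\}$, and its density step (Lemma \ref{LEMmane1}) requires metrizing the weak-$*$ topology, building a finite-dimensional projection $P_{\bar l}$ whose fibers have diameter $<\epsilon$, and then applying Rademacher's theorem to the conjugate $\Phi$ on $\R^{\bar l}$ to locate a perturbation $c_0-\sum \bar y_l c_l$ whose minimizing set sits in a single fiber. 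You instead control one test functional at a time: the sets $V_{j,n}$ reduce the density step to the one-variable concave function $\beta(t)=\alpha(c_0+th_j)$, for which differentiability off a countable set is elementary, and you recover uniqueness at the end from a countable family $(h_j)$ of $C^k$ functions dense in $C^0$, which separates points of $E$. What your route buys is simplicity --- no $\tilde d$, no fiber construction, no multidimensional Rademacher; what the paper's route buys is the stronger intermediate statement that costs whose entire minimizing facet has small weak-$*$ diameter are dense, which is the form of the result needed in the Aubry--Mather applications it is modelled on. One remark: the ``main obstacle'' you flag at the end is not actually an obstacle --- you do not need every supergradient of $\beta$ at $t$ to arise as $\int h_j\,d\pi$; you only need the inclusion you already proved, namely that each $\int h_j\,d\pi_t$ with $\pi_t\in\mathcal M(c_0+th_j)$ \emph{is} a supergradient, since at a differentiability point the superdifferential is the singleton $\{\beta'(t)\}$ and all these values must therefore coincide.
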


Here {\em residual} refers to a countable intersection of sets with dense interiors.
Theorem \ref{THMmane} follows easily from results of Ma\~n\'e \cite{mane96} (or from arguments developed subsequently by Bernard and Contreras \cite{bc08}). For sake of completeness we provide its proof which is based (following the approach of Bernard and Contreras \cite{bc08}) on the next lemma.  It shows that near any given cost function can be found another for which the minimizing facet of $K$ has arbitrarily small diameter.

\begin{lemma}\label{LEMmane1}
The weak-$*$ topology on $K$ can be metrized by a distance $\tilde d$ with the following
property. Let $c_0 \in C^k(M\times N;\R)$ be fixed.  For every neighborhood $U$ of $c_0$ in $C^k(M\times N;\R)$ and every $\epsilon>0$, there is $c\in U$ such that 
$$
\mbox{diam} \bigl( \mathcal{M}(c)\bigr) < \epsilon.
$$ 
\end{lemma}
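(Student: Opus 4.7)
The plan is to metrize the weak-$*$ topology on $K=\Pi(\mu,\nu)$ by a countable family of smooth test functions, truncate to finitely many of them so as to control $\tilde d$ up to an error $\epsilon/2$, and then exhibit a small linear perturbation of $c_0$ along these test functions which collapses the image of the minimizing face in the truncated, finite-dimensional direction.

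First I would use the separability of $C^0(M\times N;\R)$ together with the $C^0$-density of smooth functions (via partitions of unity and mollification in local charts) to choose a sequence $(h_j)_{j\ge1}$ in $C^\infty(M\times N;\R)$ with $\|h_j\|_\infty\le 1$ whose linear span is dense in $C^0(M\times N;\R)$. Then
$$
\tilde d(\pi_1,\pi_2):=\sum_{j=1}^\infty 2^{-j}\Bigl|\int_{M\times N}h_j\,d\pi_1-\int_{M\times N}h_j\,d\pi_2\Bigr|
$$
defines a distance on $\Pi(\mu,\nu)$ which, using the compactness provided by Lemma~\ref{compactness}, metrizes the weak-$*$ topology there. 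Given $\epsilon>0$, I would pick $N$ so that $\sum_{j>N}2^{-j+1}<\epsilon/2$, guaranteeing $\tilde d\le d_N+\epsilon/2$ on $\Pi(\mu,\nu)^2$, where $d_N$ is the partial sum up to $j\le N$.

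Next I would set up the finite-dimensional reduction. Consider the affine map $\tilde\Phi:\Pi(\mu,\nu)\to\R^{N+1}$ defined by $\tilde\Phi(\pi)=\bigl(\int c_0\,d\pi,\int h_1\,d\pi,\ldots,\int h_N\,d\pi\bigr)$; its image $\tilde K:=\tilde\Phi(\Pi(\mu,\nu))$ is compact and convex. For $\lambda\in\R^N$ set $c_\lambda:=c_0+\sum_{j=1}^N\lambda_j h_j\in C^k(M\times N;\R)$. Since minimizing $\int c_\lambda\,d\pi$ on $\Pi(\mu,\nu)$ is equivalent to minimizing the linear functional $(v_0,\ldots,v_N)\mapsto v_0+\lambda\cdot(v_1,\ldots,v_N)$ on $\tilde K$, we have $\tilde\Phi(\mathcal M(c_\lambda))=F_\lambda$, the exposed face of $\tilde K$ in direction $(1,\lambda)$. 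As the $h_j$ have fixed finite $C^k$-norms, there is $\delta>0$ so that $\{c_\lambda:|\lambda|<\delta\}\subset U$.

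The crux is then to analyze the value function
$$
g(\lambda):=\min_{v\in\tilde K}\Bigl(v_0+\sum_{j=1}^N\lambda_j v_j\Bigr),\qquad \lambda\in\R^N,
$$
which is concave as an infimum of affine functions of $\lambda$, hence differentiable at Lebesgue-a.e. $\lambda$ by Rademacher's theorem. A standard envelope argument identifies its superdifferential with $\partial^+g(\lambda)=\{(v_1,\ldots,v_N):v\in F_\lambda\}$, so wherever $g$ is differentiable this projection of $F_\lambda$ collapses to a single point. For such $\lambda$, any $\pi_1,\pi_2\in\mathcal M(c_\lambda)$ satisfy $\int h_j\,d\pi_1=\int h_j\,d\pi_2$ for every $j\le N$, forcing $d_N(\pi_1,\pi_2)=0$ and therefore $\tilde d(\pi_1,\pi_2)<\epsilon/2$. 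Since the set of such $\lambda$ has full Lebesgue measure in $\R^N$, it meets the open ball $\{|\lambda|<\delta\}$, producing the required $c=c_\lambda\in U$ with $\operatorname{diam}_{\tilde d}(\mathcal M(c))<\epsilon$. The main obstacle I anticipate is the envelope identification $\partial^+g(\lambda)=\pi_{\ge 1}(F_\lambda)$: this is what converts Rademacher's theorem into geometric information about $\mathcal M(c_\lambda)$, whereas the separability, truncation and density steps are routine.
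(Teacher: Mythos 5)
Your proof is correct, and its core mechanism --- reduce to a finite-dimensional concave value function $g(\lambda)$ of the perturbation parameters, apply Rademacher, and observe that at a point of differentiability the superdifferential (which contains the projection of the minimizing face) is a singleton --- is exactly the mechanism of the paper's proof, where it appears in the equivalent dual form of the convex conjugate $\Phi$ of $\Psi(x)=\min\{\int c_0\,d\pi \mid P_{\bar l}(\pi)=x\}$ and its subdifferential. The envelope inclusion you flag as the crux is in fact only needed in its easy direction ($v\in F_\lambda \Rightarrow (v_1,\dots,v_N)\in\partial^+g(\lambda)$, a one-line verification), so there is no gap there.

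Where you genuinely diverge is in how the finite family of perturbing functions with small-diameter fibers is produced. The paper first fixes a metric $\tilde d$ from an arbitrary sequence $\{f_l\}$ of continuous functions, then separately covers $K\times K\setminus D$ by the open sets $W_c$ with $c\in C^k$, extracts a countable subcover $\{W_{c_l}\}$, and proves by a compactness--contradiction argument that for $\bar l$ large the fibers of $P_{\bar l}$ have $\tilde d$-diameter less than $\epsilon$. You instead use a single sequence of smooth functions $(h_j)$ with dense span both to define $\tilde d$ and as perturbation directions, so that the small-fiber property follows for free from the tail estimate $\sum_{j>N}2^{-j+1}<\epsilon/2$: equality of the first $N$ moments forces $\tilde d<\epsilon/2$ with no further argument. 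This is a cleaner and more elementary route to the same finite-dimensional reduction; what it costs is the (routine, but necessary) verification that smooth functions with uniformly bounded sup-norm and dense linear span in $C^0(M\times N;\R)$ exist, and that the resulting $\tilde d$ metrizes the weak-$*$ topology on the compact set $K$ --- both of which you correctly address via mollification and the continuous-bijection-from-a-compactum argument.
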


\begin{proof}[Proof of Lemma \ref{LEMmane1}]
Let $U$ and $\epsilon>0$ be fixed. By compactness of $K:=\Pi(\mu,\nu)$ with respect to the weak-$*$ topology, there is a sequence $\{f_l\}_{l\in \N}$ of continuous functions that defines a metric $\tilde{d}$ on $\Pi(\mu,\nu)$ by
$$
\tilde{d}(\pi_1,\pi_2) = \sum_{l=0}^{\infty} \frac{1}{2^l} \left| \int_{M\times N} f_l \, d\pi_1 -  \int_{M\times N} f_l \, d\pi_2 \right| \qquad \forall \pi_1, \pi_2 \in \Pi(\mu,\nu),
$$
which is compatible with the weak topology. We claim that there is an integer $\bar{l}>0$ and 
$$
c_1, \ldots, c_{\bar{l}} \in C^k(M\times N ;\R)
$$
such that the continuous map 
$$
P_{\bar l} \, : \,  \Pi(\mu,\nu) \longrightarrow \R^{\bar{l}}
$$
defined by 
$$
P_{\bar l} (\pi) := \left( \int_{M\times N} c_1 \, d\pi, \ldots, \int_{M\times N} c_{\bar{l}} \, d\pi \right) \qquad \forall \pi \in \Pi(\mu,\nu),
$$
satisfies 
\begin{eqnarray}\label{diameter}
\mbox{diam} \left(P_{\bar l}^{-1}(p)\right) <\epsilon \qquad \forall p\in \R^{\bar{l}}.
\end{eqnarray}
where the latter refers to the diameter with respect to $\tilde{d}$ of the set of measures in $\Pi(\mu,\nu)$ sent to $p$ through $P_{\bar l}$. For every $c\in C^k(M\times N;\R)$, let 
$$
W_c := \left\{ \bigl(\pi_1,\pi_2\bigr) \, \vert \, \int_{M \times N} c \, d\pi_1 \neq \int_{M \times N} c \, d\pi_2 \right\}.
$$ 
By Lemma \ref{LEMuseful}, the sets $W_c$ are open and their union covers the complement of the diagonal $D=\{(\pi,\pi) \, \vert \, \pi \in K\}$. Since this complement is open in the metrizable set $K \times K$, 
we can extract a countable subcovering from this covering. So there is a sequence $\{c_l\}_{l\in \N}$ such that 
\begin{eqnarray}\label{cover}
K \times K \setminus D = \bigcup_{l\in \N} W_{c_l}.
\end{eqnarray}
We need to check that $P_{\bar l}$ satisfies (\ref{diameter}) if $\bar l$ is large enough. If not, there are two sequences $\{\pi_l^1\}_l, \{\pi_l^2\}_l$ in $K$ such that
$$
P_l \bigl(\pi_l^1\bigr) = P_l \bigl(\pi_l^2\bigr) \quad \mbox{and} \quad \tilde{d} \bigl(\pi_l^1,\pi_l^2\bigr) \geq \epsilon \qquad \forall l.
$$
Then up to taking subsequences,  $\{\pi_l^1\}_l$ and $\{\pi_l^2\}_l$ converge respectively to some $\pi^1, \pi^2 \in K$ with $\tilde{d}(\pi^1,\pi^2)\geq \epsilon$.  But by (\ref{cover}), there is $m$ such that 
$$
\int_{M \times N} c_m \, d\pi^1 \neq \int_{M \times N} c_m \, d\pi^2.
$$
But we have 
$$
P_m \bigl(\pi_l^1\bigr) = P_m \bigl(\pi_l^2\bigr) \qquad \forall l \ge m,
$$
which passing to the limit gives $P_m \bigl(\pi^1\bigr) = P_m \bigl(\pi^2\bigr)$, a contradiction.

Let $K':=P_{\bar l}(K)$ which is a nonempty convex compact set in $\R^{\bar{l}}$, denote by  $\Phi : \R^{\bar{l}} \rightarrow \R$ the function defined by 
$$
\Psi (x) := \left\{ \begin{array}{ccl}
\min \left\{ \int_{M\times N} c_0 \, d\pi \, \vert \, \pi \in K \mbox{ s.t. } P_{\bar l}(\pi)=x \right\} & \mbox{ if } & x \in K' \\
+\infty & \mbox{ if } & x\notin K'
\end{array}
\right.
\qquad \forall x \in \R^{\bar{l}},
$$
and denote by $\Phi : \R^{\bar{l}} \rightarrow \R$ its conjugate, that is 
$$
\Phi (y)  :=  \sup_{x\in \R^{\bar{l}}} \Bigl\{ \langle y,x\rangle - \Psi(x) \Bigr\} = \max_{x\in K'} \Bigl\{ \langle y,x\rangle -\Psi(x) \Bigr\} =    \max_{\pi\in K} \left\{ \int_{M\times N} \sum_{l=1}^{\bar{l}} y_l  c_l \, d\pi -\Psi \bigl(P_{\bar l}(\pi) \bigr) \right\},
$$
for every  $y \in \R^{\bar l}$.  By construction, $\Phi$ is convex and finite on $\R^{\bar{l}}$, moreover for every $\bar{y} \in \R^{\bar{l}}$ and every $\bar{x} \in \R^{\bar{l}}$ such that $
 \Phi \bigl(\bar{y}\bigr)  =  \langle \bar{y},\bar{x}\rangle -\Psi \bigl( \bar{x} \bigr)$, we have
$$
\Phi\bigl( \bar{y} \bigr) + \langle y - \bar{y}, \bar{x} \rangle =  \langle y, \bar{x} \rangle -\Psi \bigl( \bar{x} \bigr) \leq  \Phi(y) \qquad \forall y\in \R^{\bar{l}}.
$$
This means that $\bar{x}$ belongs to $\partial \Phi\bigl(\bar{y}\bigr)$, the subdifferential of $\Phi$ at $\bar{y}$.
If in addition $\bar{\pi} \in K$  satisfies $P_{\bar l}\bigl( \bar{\pi}\bigr) = \bar{x}$ and
$$
\int_{M \times N}  \left( c_0  -   \sum_{l=1}^{\bar{l}} \bar{y}_l  c_l \right)    \, d\bar{\pi} \leq \int_{M \times N}  \left( c_0  -   \sum_{l=1}^{\bar{l}} \bar{y}_l  c_l \right)    \, d\pi \qquad \forall \pi \in K,
$$
then by definition of $\Psi$, we have
$$
 \int_{M\times N} \sum_{l=1}^{\bar{l}} \bar{y}_l  c_l \, d\pi -\Psi \bigl(P_{\bar l}(\pi) \bigr) \leq  \int_{M\times N} \sum_{l=1}^{\bar{l}} \bar{y}_l  c_l \, d\bar{\pi} -\Psi \bigl( \bar{x} \bigr), \qquad \forall \pi \in K.
$$
This means that 
$$
\mathcal{M} \left( c_0 -   \sum_{l=1}^{\bar{l}} \bar{y}_l  c_l \right) \subset P_{\bar l}^{-1} \left( \partial \Phi\bigl( \bar{y}\bigr) \right). 
$$
By Rademacher's theorem, for almost every $\bar{y} \in \R^{\bar{l}}$ the set $ \partial \Phi\bigl( \bar{y}\bigr)$ is a singleton. We conclude by (\ref{diameter}).
\end{proof}

Let us now prove Theorem \ref{THMmane}.

\begin{proof}[Proof of Theorem \ref{THMmane}]
For every integer $l>0$ denote by $\mathcal{C}_l$ the set of $c\in C^k(M\times N;\R)$ such that
$$
\mbox{diam} \bigl( \mathcal{M}(c)\bigr) < \frac{1}{l}.
$$ 
By the continuity part in Lemma \ref{LEMuseful}, each set $\mathcal{C}_l$ is open and by Lemma \ref{LEMmane1}, it is dense as well. Then the set 
$$
\mathcal{C} := \bigcap_{l\in \N^*} \mathcal{C}_l
$$
does the job.
\end{proof}

\


\begin{thebibliography}{99}

\bibitem{akm11}
N. Ahmad, H.~K. Kim  and R.~J. McCann.
\newblock Optimal transportation, topology and uniqueness.
\newblock  {\em Bull. Math. Sci.} {\bf 1} (2011) 13--32.

\bibitem{BenesStepan87}
V.~Bene\v{s} and J.~\v{S}t\v{e}p\'{a}n.
\newblock The support of extremal probability measures with given marginals.
\newblock In {\em Mathematical Statistics and Probability Theory. Volume A: Theoretical Aspects},
M.L. Puri, P.~R\'ev\'esz and W.~Wertz, eds. D. Reidel Publishing Co., Dordrecht, 1987, pp 33-41.

\bibitem{bb07} 
P. Bernard and B.  Buffoni.
\newblock Optimal mass transportation and Mather theory. 
\newblock {\em J. Eur. Math. Soc.} {\bf  9} (2007) 85--121.

\bibitem{bc08}
P.~Bernard and G.~Contreras.
\newblock A generic property of families of Lagrangian systems.
\newblock {\em Ann. of Math.} {\bf 167} (2008) 1099--1108.
  
\bibitem{bc09}
S. Bianchini  and L. Caravenna.
\newblock On the extremality, uniqueness, and optimality of transference plans.
\newblock  {\em Bull. Inst. Math. Acad. Sin. (N.S.)} {\bf 4} (2009) 353--454.

\bibitem{brenier}
Y. Brenier.
\newblock Polar factorization and monotone rearrangement of vector-valued functions.
\newblock {\em Comm. Pure Appl. Math.} {\bf 44} (1991) 375--417.



\bibitem{ChiapporiMcCannNesheim10}
P.-A. Chiappori, R. McCann and L. Nesheim.
\newblock Hedonic price equilibria, stable matching, and optimal transport: equivalence, topology, and uniqueness.
\newblock {\em Econom. Theory} {\bf 42} (2010) 317--354.

\bibitem{Clarke75}
F.H.~Clarke,
\newblock Generalized gradients and applications,
\newblock {\em Trans. Amer. Math. Soc.} {\bf 205} (1975) 247--262.


\bibitem{clarke83}
F.~H. Clarke.
\newblock {\em Optimization and Nonsmooth Analysis}.
\newblock Wiley Interscience, New York, 1983.




\bibitem{Federer69}
H.~Federer.
\newblock {\em Geometric Measure Theory},
\newblock Springer-Verlag, New York, 1969.

\bibitem{Gangbo95}
W. Gangbo.
\newblock Habilitation thesis.
\newblock  Universit\'e de Metz (1995).


\bibitem{gm96}
W. Gangbo and R.~J. McCann.
\newblock The geometry of optimal transportation.
\newblock  {\em Acta Math.} {\bf 177} (1996) 113--161.

\bibitem{gm00}
W. Gangbo and R.~J. McCann.
\newblock Shape recognition via Wasserstein distance.
\newblock  {\em Quart. Appl. Math.} {\bf 58} (2000) 705--737.

\bibitem{Gigli11}
N.~Gigli.
\newblock On the inverse implication of Brenier-McCann theorems and the structure of $(P^2(M),W_2)$. \newblock {\em Methods Appl. Anal.}  {\bf 18} (2011) 127--158.

\bibitem{gg73}
V. Golubitsky and V. Guillemin.
\newblock {\em Stable mappings and their singularities.}
\newblock {Graduate Texts in Mathematics}, vol.~14. Springer-Verlag, New-York, 1973.

\bibitem{hw95}
K. Hestir and S.~C. Williams.
\newblock Supports of doubly stochastic measures.
\newblock {\em Bernouilli} {\bf 1} (1995) 217--243.

\bibitem{Levin99}
V.~Levin.
\newblock Abstract cyclical monotonicity and Monge solutions for the general Monge-Kantorovich problem. 
\newblock {\em Set-Valued Anal.} {\bf 7} (1999), 7–32.

\bibitem{Levin08}
V.L.~Levin.
\newblock On the generic uniqueness of an optimal solution in an
              infinite-dimensional linear programming problem.
\newblock {\em Dokl. Akad. Nauk} {\bf 421} (2008) 21--233.


\bibitem{mane96}
R.~Ma{\~n}{\'e}.
\newblock  Generic properties and problems of minimizing measures of Lagrangian systems.
\newblock {\em Nonlinearity} {\bf 9} (1996) 273--310.

\bibitem{McCann95} 
R.J.~McCann.
\newblock Existence and uniqueness of monotone measure-preserving maps.
\newblock {\em Duke Math. J.} {\bf 80} (1995), 309–323. 

\bibitem{mccann01}
R.~J. McCann.
\newblock Polar factorization of maps in Riemannian manifolds.
\newblock {\em Geom. Funct. Anal.} {\bf 11} (2001) 589--608.

\bibitem{milnor97}
J.~W. Milnor.
\newblock {\em Topology from the differential viewpoint.}
\newblock {Princeton Landmarks in Mathematics}. Princeton University Press, Princeton, 1997.

\bibitem{Moameni14p}
A. Moameni.
\newblock Supports of extremal doubly stochastic measures.
\newblock {\em Preprint,} 2014.

\bibitem{riffordmemoir}
L. Rifford.
\newblock {\em Sub-Riemannian Geometry and Optimal Transport.}
\newblock Springerbriefs in Mathematics, 2014.


\bibitem{srivastava98}
S.~M. Srivastava.
\newblock {\em A course on Borel sets.}
\newblock {Graduate Texts in Mathematics}, Vol.~180. Springer-Verlag, New York, 1998.

\bibitem{villani09}
C. Villani.
\newblock {\em Optimal transport, old and new.}
\newblock {Grundlehren des mathematischen Wissenschaften}, Vol.~338.
Springer-Verlag, Berlin, 2009.

\bibitem{Whitney57}
H. Whitney.
\newblock {\em Geometric Integration Theory.}
\newblock Princeton University Press, Princeton, 1957.


\bibitem{Zajicek79}
L.~Zaj{\'\i\char'24}$\!\!\!$cek.
\newblock On the differentiability of convex functions in finite and infinite dimensional spaces. 
\newblock {\em Czechoslovak Math. J.} {\bf 29} (1979) 34---348.
\end{thebibliography}
\end{document}